\newtheorem{theorem}{Theorem}[subsection]
\newtheorem{lemma}[theorem]{Lemma}
\newtheorem{proposition}[theorem]{Proposition}
\newtheorem{corollary}[theorem]{Corollary}
\newtheorem{conjecture}[theorem]{Conjecture}
\theoremstyle{definition}
\newtheorem*{definition}{Definition}
\theoremstyle{remark}
\newtheorem{remark}[theorem]{Remark}
\newcommand{\FF}{\mathbb{F}}
\newcommand{\ZZ}{\mathbb{Z}}
\newcommand{\QQ}{\mathbb{Q}}
\newcommand{\LL}{\mathbb{L}}
\newcommand{\TT}{\mathbb{T}}
\newcommand{\GG}{\mathbb{G}}
\newcommand{\EE}{\mathbb{E}}
\newcommand{\CC}{\mathbb{C}}
\newcommand{\bg}{\mathbf{g}}
\newcommand{\bh}{\mathbf{h}}
\newcommand{\bm}{\mathbf{m}}
\newcommand{\bn}{\mathbf{n}}
\newcommand{\bp}{\mathbf{p}}
\newcommand{\bX}{\mathbf{X}}
\newcommand{\cA}{\mathcal{A}}
\newcommand{\cM}{\mathcal{M}}
\newcommand{\cN}{\mathcal{N}}
\newcommand{\cP}{\mathcal{P}}
\newcommand{\cT}{\mathcal{T}}
\DeclareMathOperator{\Lie}{Lie}
\DeclareMathOperator{\Ker}{Ker}
\DeclareMathOperator{\GL}{GL}
\DeclareMathOperator{\Mat}{Mat}
\DeclareMathOperator{\Cent}{Cent}
\DeclareMathOperator{\End}{End}
\DeclareMathOperator{\Spec}{Spec}
\DeclareMathOperator{\Hom}{Hom}
\DeclareMathOperator{\Res}{Res}
\DeclareMathOperator{\Id}{Id}
\DeclareMathOperator{\im}{Im}
\DeclareMathOperator{\trdeg}{tr.deg}
\newcommand{\ok}{\overline{k}}
\newcommand{\tpi}{\widetilde{\pi}}
\newcommand{\oFqt}{\overline{\FF_q(t)}}
\newcommand{\iso}{\stackrel{\sim}{\to}}
\begin{document}
\title[Periods and logarithms of Drinfeld modules]{Algebraic
relations among periods and logarithms of rank~$2$ Drinfeld modules}
\author{Chieh-Yu Chang}
\address{National Center for Theoretical Sciences, Mathematics Division,
National Tsing Hua University, Hsinchu City 30042, Taiwan
  R.O.C.}
\address{Department of Mathematics, National Central
  University, Chung-Li 32054, Taiwan R.O.C.}

\email{cychang@math.cts.nthu.edu.tw}

\author{Matthew A. Papanikolas}
\address{Department of Mathematics, Texas A{\&}M University, College Station,
TX 77843-3368, USA} \email{map@math.tamu.edu}

\thanks{The second author was supported by NSF Grant DMS-0600826.}

\subjclass[2000]{Primary 11J93; Secondary 11G09}

\date{July 19, 2008}

\begin{abstract}
For any rank $2$ Drinfeld module $\rho$ defined over an algebraic
function field, we consider its period matrix $P_{\rho}$, which is
analogous to the period matrix of an elliptic curve defined over a
number field. Suppose that the characteristic of $\FF_q$ is odd and
$\rho$ is without complex multiplication. We show that the
transcendence degree of the field generated by the entries of
$P_{\rho}$ over $\FF_q(\theta)$ is $4$. As a consequence, we show
also the algebraic independence of Drinfeld logarithms of algebraic
functions which are linearly independent over $ \FF_q(\theta)$.
\end{abstract}

\keywords{Algebraic independence, periods, logarithms, Drinfeld
modules, $t$-motives}

\maketitle

\section{Introduction}

This paper focuses on the algebraic independence of periods,
quasi-periods, and logarithms of Drinfeld modules of rank~$2$,
without complex multiplication, that are defined over the algebraic
closure of $\FF_q(\theta)$. By the main theorem of
\cite{Papanikolas}, which itself builds on results in \cite{ABP},
the proofs break into two parts.  The first is to relate the
quantities in question to special values of solutions of certain
Frobenius difference equations.  For periods and quasi-periods these
problems are solved using Anderson generating functions together
with methods inspired by \cite{Pellarin}.  For logarithms we solve
difference equations related to extensions of the trivial $t$-motive
by the $t$-motive associated to the Drinfeld module. The second part
is to show that the Galois group of the system of difference
equations has maximal dimension. The difficulty here is that these
quantities can have many linear relations, and characterizing these
linear relations in terms of the dimension of the Galois group is
quite complicated.

Yu \cite{Yu90,Yu97} proved that all linear relations among periods
and logarithms of Drinfeld modules of arbitrary rank are the ones
that are expected, ultimately relying on the Sub-$t$-module Theorem
of \cite{Yu97}.  Using Yu's methods, Brownawell \cite{Brownawell}
extended these results to values of quasi-periodic functions.  Later
David and Denis \cite{DavidDenis} used Yu's theorem to show there
are no quadratic relations among periods of rank~$2$ Drinfeld
modules without complex multiplication.

\subsection{Periods and Quasi-periods of Drinfeld modules}
Let $\FF_q$ be the finite field of $q$ elements, where $q$ is a
power of a prime $p$. Let $k:=\FF_q(\theta)$ be the rational
function field in a variable $\theta$ over $\FF_q$. Let
$\CC_{\infty}$ be the completion of an algebraic closure of
$\FF_q((\frac{1}{\theta})) $ with respect to the non-archimedean
absolute value of $k$ for which $|\theta|_{\infty}=q$. Let
$\CC_{\infty}[\tau]$ be the twisted polynomial ring in $\tau$ over
$\CC_{\infty}$ subject to the relation $\tau c=c^{q} \tau$ for $c\in
\CC_{\infty}$.

Now let $t$ be an independent variable from $\theta$. A Drinfeld
$\FF_q[t]$-module $\rho$ (with generic characteristic) is an
$\FF_q$-algebra homomorphism $\rho:\FF_q[t]\rightarrow
\CC_{\infty}[\tau]$ such that for all $a\in \FF_q[t]$, the constant
term of $\rho_a$, as a polynomial in $\tau$, is $a(\theta)$.  Given
a subfield $L$ of $\CC_{\infty}$ containing~$k$, we say that $\rho$
is defined over $L$ if all the coefficients of $\rho_{t}$ fall in
$L$. The degree of $\rho_{t}$ in $\tau$ is called the rank of
$\rho$.

The period lattice of a rank $r$ Drinfeld $\FF_q[t]$-module $\rho$
is defined as follows.  The exponential function of $\rho$ is the
function $\exp_{\rho}(z)=z+\sum_{i=1}^{\infty} \alpha_{i}z^{q^{i}}$,
with $\alpha_{i}\in \CC_{\infty}$, such that $\exp_{\rho}(\theta
z)=\rho_{t}(\exp_{\rho}(z))$.  It can be shown that $\exp_\rho$ is
entire, $\FF_q$-linear, and surjective.  Let $\Lambda_{\rho}$ be the
kernel of $\exp_{\rho}$.  One finds that $\Lambda_{\rho}$ is a
discrete, free $\FF_q[\theta]$-module of rank $r$ inside
$\CC_{\infty}$.  We call $\Lambda_{\rho}$ the period lattice of
$\rho$, and any element of $\Lambda_{\rho}$ is called a period of
$\rho$.  In 1986, Yu \cite{Yu86} established a fundamental theorem
which asserts that any non-zero period of $\rho$ is transcendental
over $k$ if $\rho$ is defined over $\ok$.

In analogy with de Rham cohomology for elliptic curves, in the late
$1980$'s Anderson, Deligne, Gekeler, and Yu developed a de Rham
cohomology theory for Drinfeld modules (for more details, see
\cite{BP, Gekeler deRham, Thakur, Yu90}). Fix a rank $r$ Drinfeld
$\FF_q[t]$-module $\rho$.  An $\FF_q$-linear map $\delta:
\FF_q[t]\rightarrow \CC_{\infty}[\tau] \tau$ is called a
biderivation if $\delta_{ab}=a(\theta)\delta_{b}+\delta_{a}\rho_{b}$
for any $a,b\in \FF_q[t]$.  A biderivation $\delta$ is uniquely
determined by $\delta_{t}$, and hence the set of all biderivations,
denoted by $D(\rho)$, is a $\CC_{\infty}$-vector space.  Moreover,
for $m\in \CC_{\infty}[\tau]$ the map $\delta^{(m)}:a\mapsto
a(\theta)m- m \rho_{a}$ is also a biderivation.  The biderivation
$\delta^{(m)}$ is called an inner biderivation, and we denote by
$D_{i}(\rho)$ the set of all inner biderivations.  In the case that
$m\in \CC_{\infty}[\tau] \tau$, $\delta^{(m)}$ is called strictly
inner, and the set of all strictly inner biderivations is denoted by
$D_{si}(\rho)$. The de Rham cohomology of $\rho$ is defined to be
the $\CC_\infty$-vector space $H_{DR}(\rho) := D(\rho)/
D_{si}(\rho)$.

Given $\delta \in D(\rho)$, there is a unique entire function
$F_{\delta}$ of the form $F_{\delta}(z) = \sum_{i=1}^{\infty}
c_{i}z^{q^{i}}$ satisfying the functional equation
\begin{equation}\label{E:QuasiPerFunEq}
F_{\delta}(a(\theta)z) - a(\theta) F_{\delta}(z) =
\delta_{a}(\exp_{\rho}(z)),
\end{equation}
for every non-constant $a \in \FF_q[t]$.  We call $F_{\delta}$ the
quasi-periodic function of $\rho$ associated to $\delta$.  It is
$\FF_q$-linear, and furthermore $F_{\delta}|_{\Lambda_\rho} :
\Lambda_\rho \to \CC_\infty$ is $\FF_q[\theta]$-linear.

For the inner biderivation $\delta^{(1)} : a \mapsto
a(\theta)-\rho_{a}$, one has $F_{\delta^{(1)}} = z- \exp_{\rho}(z)$
and hence $F_{\delta^{(1)}}(\lambda)=\lambda$ for $\lambda\in
\Lambda_{\rho}$.  We think of scalar multiples of $\delta^{(1)}$ as
differentials of the first kind.  If $\delta \notin D_i(\rho)$, the
values $F_{\delta}(\lambda)$ for $\lambda\in \Lambda_{\rho}$ are
called quasi-periods of $\rho$.  For any $\delta\in D_{si}(\rho) $
one has $F_{\delta}(\lambda)=0$ for $\lambda\in \Lambda_{\rho}$.
Thus there is a well-defined map of $\CC_\infty$-vector spaces,
\begin{align*}
  H_{DR}(\rho) & \rightarrow \Hom_{\FF_q[\theta]}(\Lambda_{\rho},\CC_{\infty}) \\
  \delta & \mapsto (\lambda\mapsto F_{\delta}(\lambda)).
\end{align*}
which is an isomorphism by Gekeler \cite{Gekeler deRham}. In
particular, $\dim_{\CC_{\infty}} H_{DR}(\rho) = r$.

In $1990$, Yu \cite{Yu90} established a fundamental theorem
concerning the transcendence of quasi-periods. The theorem asserts
that if $\rho$ is defined over $\ok$, then for any $\delta\in
D(\rho)\setminus D_{si}(\rho)$ and $0 \neq \lambda\in
\Lambda_{\rho}$, $F_{\delta}(\lambda)$ is transcendental over $k$.
When we think of $\delta\in D(\rho)\setminus D_{i}(\rho)$ as a
differential of the second kind, Yu's theorem parallels the
classical work of Schneider on elliptic integrals of the second
kind.

The set $\End(\rho) := \{ z\in \CC_{\infty} : z\Lambda_{\rho}
\subseteq \Lambda_{\rho} \}$ is the endomorphism ring of $\rho$, and
if $\End(\rho) \supsetneq \FF_q[\theta]$, then $\rho$ is said to
have complex multiplication. We let $K_\rho$ denote the fraction
field of $\End(\rho)$.

We now fix a rank $2$ Drinfeld $\FF_q[t]$-module $\rho$ defined over
$\ok$ and fix a basis $\{ \omega_{1}, \omega_{2}\}$ of
$\Lambda_{\rho}$ over $\FF_q[\theta]$.  We denote by $F_{\tau}$ the
quasi-periodic function of $\rho$ associated to the biderivation
given by $t\mapsto \tau$. The matrix
\begin{equation}\label{E:DefPrho}
P_{\rho} := \begin{bmatrix}
\omega_{1} & -F_{\tau}(\omega_{1}) \\
\omega_{2} & -F_{\tau}(\omega_{2})
\end{bmatrix}
\end{equation}
is called the period matrix of $\rho$.  As in \cite{BP,Thakur,Yu90},
it is possible to identify the columns of ${P_{\rho}}^{\rm{tr}}$ as
generators of the period lattice of the $2$-dimensional $t$-module
that is the extension of $\rho$ by the additive group $\GG_{a}$ and
which is associated to the biderivation $t \mapsto \tau$.  However,
we do not pursue this further in this paper.  Anderson proved an
analogue of Legendre's relation for periods and quasi-periods of
elliptic curves over $\CC$, namely
\[
 \omega_{1} F_{\tau}(\omega_{2})-\omega_{2}F_{\tau}(\omega_{1})  =
 \frac{\tpi}{\xi},
\]
for some $\xi \in \overline{\FF_q}^{\times}$ satisfying
$\xi^{\frac{1}{q}}=-\xi $ (cf.\ \cite[Thm.~6.4.6]{Thakur}). Here
$\tpi$ is a generator of the period lattice of the Carlitz
$\FF_q[t]$-module $C$ given by $C_{t} = \theta+\tau$.

Now let $\ok(P_{\rho})$ be the field generated by the entries of
$P_{\rho}$.  Thiery \cite{Thiery} has shown that $\trdeg_{\ok}
\ok(P_{\rho})=2$ if $\rho$ has complex multiplication.  For the case
that $\rho$ does not have complex multiplication, based on Yu's
Sub-$t$-module theorem \cite{Yu97}, Brownawell \cite{Brownawell}
proved an analogue of Masser's work on linear independence of
periods and quasi-periods for elliptic curves without complex
multiplication: the $6$ quantities
\[
  1, \tpi, \omega_{1}, \omega_{2}, F_{\tau}(\omega_{1}), F_{\tau}(\omega_{2})
\]
are linearly independent over $\ok$. The first main result of the
present paper is as follows (later stated as
Theorem~\ref{T:PerAlgInd}).

\begin{theorem}\label{Thm1}
Suppose that $p$ is odd.  Let $\rho$ be a rank $2$ Drinfeld
$\FF_q[t]$-module defined over~$\ok$ without complex multiplication.
Then the $4$ quantities
\[
\omega_{1}, \omega_{2}, F_{\tau}(\omega_{1}), F_{\tau}(\omega_{2})
\]
are algebraically independent over $\ok$.
\end{theorem}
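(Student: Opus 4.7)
The strategy is to apply the main theorem of \cite{Papanikolas}, which equates the transcendence degree of the special values coming from a suitable pre-$t$-motive $M$ with the dimension of its motivic Galois group $\Gamma_M$. Specifically, if $\Psi$ is a rigid analytic trivialization of $M$, then $\trdeg_{\ok}\ok(\Psi(\theta)) = \dim\Gamma_M$. The program is therefore twofold: first, arrange that $\Psi(\theta)$ generates the field $\ok(P_\rho)$; second, show that the relevant Galois group is all of $\GL_{2/\FF_q(t)}$, so that its dimension equals $4$.

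For the first task, I would follow \cite{Pellarin} and attach to each period $\omega_i$ the Anderson generating function
\[
f_{\omega_i}(t) = \sum_{n\ge 0}\exp_{\rho}\!\left(\frac{\omega_i}{\theta^{n+1}}\right)t^{n},
\]
an entire $\FF_q$-linear function whose value and residue at $t=\theta$ recover $\omega_i$ and $F_{\tau}(\omega_i)$ respectively. Packaging the two functions $f_{\omega_1}, f_{\omega_2}$ together with their Frobenius twists into a $2\times 2$ matrix produces a rigid analytic trivialization $\Psi$ satisfying the Frobenius difference equation determined by $\rho_t$. Hence $\Psi$ trivializes the pre-$t$-motive $M_{\rho}$ attached to $\rho$, and a direct check shows $\ok(\Psi(\theta)) = \ok(P_{\rho})$.

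For the Galois group, the inclusion $\Gamma_M \hookrightarrow \GL_{2/\FF_q(t)}$ is tautological since $M_{\rho}$ has dimension $2$. Anderson's Legendre-type identity identifies $\wedge^{2} M_{\rho}$ with the Carlitz $t$-motive, so the surjectivity of $\det:\Gamma_M\to \GG_m$ follows from the transcendence of $\tpi$. The absence of complex multiplication forces $\End(M_{\rho})=\FF_q(t)$; by Tannakian formalism the centralizer of $\Gamma_M$ reduces to scalars, so $\Gamma_M$ acts absolutely irreducibly on the standard representation, ruling out Borel subgroups and tori.

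The main obstacle---and, I expect, where the hypothesis $p\neq 2$ must enter---is to exclude the last remaining intermediate case, that of a normalizer $N(T)\subset\GL_2$ of a non-split maximal torus, which sits as an extension $1\to T\to N(T)\to \ZZ/2\ZZ\to 1$. If $\Gamma_M$ were contained in such $N(T)$, then passage to the index-two subgroup $\Gamma_M \cap T$ would correspond to a quadratic twist of $M_{\rho}$ acquiring a non-scalar endomorphism over a separable quadratic extension of $\FF_q(t)$, which by descent would produce a genuine element of $\End(\rho)\setminus\FF_q[\theta]$, contradicting the no-CM hypothesis. The assumption that $p$ is odd is needed precisely to ensure that the relevant quadratic extension is separable and that the Weyl involution cannot be absorbed by the $q$-Frobenius. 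Once this case is eliminated, $\Gamma_M \supseteq \mathrm{SL}_2$, and combined with $\det\Gamma_M=\GG_m$ we conclude $\Gamma_M = \GL_2$, whence $\dim\Gamma_M=4$ and the theorem follows.
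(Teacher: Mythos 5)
Your high-level scaffolding matches the paper: attach Anderson generating functions to obtain a rigid analytic trivialization $\Psi$ with $\ok(\Psi(\theta))=\ok(P_\rho)$, appeal to the Papanikolas dimension theorem, use the Carlitz/Legendre relation to get $\det:\Gamma_M\twoheadrightarrow\GG_m$, and invoke the no-CM hypothesis via an endomorphism computation. But the passage from ``centralizer of $\Gamma_M$ reduces to scalars'' to ``$\Gamma_M$ acts absolutely irreducibly'' is a genuine gap: the centralizer in $\Mat_2$ of the full upper-triangular Borel is scalar, yet that representation is plainly reducible. Without first knowing that the tautological representation is \emph{semisimple}, a trivial commutant cannot yield absolute irreducibility, and semisimplicity is exactly what the paper has to work for.

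Your diagnosis of where $p\neq 2$ enters is also off target. The normalizer $N(T)$ of a non-split torus is disconnected, whereas by \cite[\S 4]{Papanikolas} the scheme $\Gamma_\Psi$ is geometrically irreducible, hence connected, so $\Gamma_M\subseteq N(T)$ with $\Gamma_M\not\subseteq T$ never arises and no descent-to-CM argument for $N(T)$ is needed. The actual role of odd characteristic is in the semisimplicity step (Theorem~\ref{T:PhiSemisimplicity}): if $[e,1]\Psi^{-1}\bm$ spans a $\Gamma_\Psi(\oFqt)$-stable line, then any $\bigl[\begin{smallmatrix}x&y\\ z&w\end{smallmatrix}\bigr]^{-1}\in\Gamma_\Psi$ forces $ye^2+(w-x)e-z=0$; when $p$ is odd and some such $y\neq 0$, this quadratic is separable, so $e$ has a genuine Galois conjugate $\eta(e)\neq e$ over $\FF_q(t)$ producing a second stable line and hence complete reducibility. (The case $y\equiv 0$ is excluded separately using simplicity of $M$ as a $\ok(t)[\sigma,\sigma^{-1}]$-module, another ingredient your sketch omits.) With semisimplicity and the Tate-type theorem $\End(M)\cong\Cent_{\Mat_2(\FF_q(t))}(\Gamma_\Psi(\oFqt))$ in hand, absolute irreducibility follows, and the conclusion is then reached not by producing $\SL_2$ directly but by observing that $\dim\Gamma_M\leq 3$ would force $\Gamma_M$ to be solvable (as an extension of $\GG_m$ by a group of dimension $\leq 2$), contradicting absolute irreducibility.
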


The omitted case of $p=2$ provides some added difficulties, which
require further investigation (see Remark~\ref{R:char2}).

\subsection{Algebraic independence of Drinfeld logarithms}
In \cite{Yu97}, Yu proved the full analogue of the qualitative form
of Baker's theorem on logarithms of algebraic numbers for Drinfeld
modules.  Using Yu's Sub-$t$-module Theorem, Brownawell
\cite{Brownawell} proved linear independence of quasi-periods as
well.

\begin{theorem}[Brownawell {\cite[Prop.~2]{Brownawell}};
Yu {\cite[Thm.~4.3]{Yu97}}] \label{T:BrownYu} Let $\rho$ be a
Drinfeld $\FF_q[t]$-module defined over $\ok$. Suppose
$F_{\delta_1}, \dots, F_{\delta_s}$ are quasi-periodic functions for
$\CC_\infty$-linearly independent biderivations $\delta_1, \dots,
\delta_s$ in $D(\rho)/D_i(\rho)$, which are defined over $\ok$.

Let $\lambda_{1}, \dots, \lambda_{m}$ be elements of $\CC_{\infty}$
such that $\exp_{\rho}(\lambda_{i})\in \ok$ for $i=1, \dots, m$. If
$\lambda_{1}, \dots, \lambda_{m}$ are linearly independent over
$K_{\rho}$, then the $1+m(s+1)$ quantities $1$, $\lambda_{1}, \dots,
\lambda_{m}$, $\cup_{j=1}^s \{ F_{\delta_j}(\lambda_1), \dots$,
$F_{\delta_j}(\lambda_m) \}$, are linearly independent over $\ok$.
\end{theorem}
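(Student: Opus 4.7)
\emph{Strategy.} The plan is to proceed by contradiction and invoke Yu's Sub-$t$-module Theorem. Suppose there is a nontrivial $\ok$-linear relation
\[
c_0 + \sum_{i=1}^m c_i \lambda_i + \sum_{i=1}^m \sum_{j=1}^s c_{ij} F_{\delta_j}(\lambda_i) = 0.
\]
The key observation is that each quasi-period $F_{\delta_j}(\lambda_i)$ arises as a coordinate of the logarithm of an $\ok$-rational point on an extension $t$-module, so the entire collection of quantities appearing in the relation can be packaged into a single vector $\mathbf{u}\in\Lie E(\CC_\infty)$ that is a logarithm of an algebraic point on a suitably constructed $t$-module $E$ of dimension $1+m(s+1)$.

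\emph{Construction of $E$ and $\mathbf{u}$.} For each biderivation $\delta_j$, the assignment $a\mapsto \bigl(\begin{smallmatrix}\rho_a & 0 \\ \delta_{j,a} & a(\theta)\end{smallmatrix}\bigr)$ defines a $2$-dimensional $t$-module $E_j$, an extension of $\rho$ by $\GG_a$. A direct check using \eqref{E:QuasiPerFunEq} gives $\exp_{E_j}(z,w)=(\exp_\rho(z),\,F_{\delta_j}(z)+w)$, so the point $(\exp_\rho(\lambda_i),0)\in E_j(\ok)$ admits $(\lambda_i,-F_{\delta_j}(\lambda_i))$ as a logarithm. Gluing the $E_j$ over $\rho$, set $E^{\circ}:=E_1\times_\rho\cdots\times_\rho E_s$, take $m$ copies, and adjoin a trivial $\GG_a$ summand for the constant $1$ to form $E:=\GG_a\oplus (E^{\circ})^m$. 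Let $\mathbf{u}\in\Lie E(\CC_\infty)$ be the vector with coordinates $1,\lambda_i,-F_{\delta_j}(\lambda_i)$ in the natural order; then $\exp_E(\mathbf{u})\in E(\ok)$. The hypothetical relation says exactly that the $\ok$-linear functional $\mathbf{c}$ with coordinates $(c_0,c_i,-c_{ij})$ annihilates $\mathbf{u}$.

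\emph{Sub-$t$-module analysis and the main obstacle.} By Yu's Sub-$t$-module Theorem, the smallest $t$-submodule $H\subseteq E$ defined over $\ok$ with $\mathbf{u}\in\Lie H$ exists, and the relation forces $\Lie H$ into the proper hyperplane $\ker\mathbf{c}$, so $H\subsetneq E$. The argument then becomes a classification of proper $t$-submodules of $E$. Projecting $H$ to the underlying $\rho^m$ yields an $\End(\rho)$-submodule, which the $K_\rho$-linear independence of $\lambda_1,\dots,\lambda_m$ forces to be all of $\rho^m$. Consequently, for each $i$ the intersection of $H$ with the $\GG_a^s$-fiber over the $i$-th copy of $\rho$ cuts out a nontrivial $\CC_\infty$-linear combination $\sum_j c_{ij}[\delta_j]$ of extension classes that must vanish; the $\CC_\infty$-linear independence of the $\delta_j$ in $D(\rho)/D_i(\rho)$ then forces all $c_{ij}=0$, after which the residual relation $c_0+\sum c_i\lambda_i=0$ is ruled out by Yu's logarithm theorem \cite{Yu97}. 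The main obstacle is this classification step: one must track $t$-submodules of the fiber-product $E$ in both the horizontal ($K_\rho$-structure across the $m$ copies of $\rho$) and vertical (extension-class) directions simultaneously, and arrange that the two linear-independence hypotheses combine to rule out every proper $H$.
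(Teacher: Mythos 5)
The paper does not prove this theorem; it is imported wholesale from Brownawell {\cite[Prop.~2]{Brownawell}} and Yu {\cite[Thm.~4.3]{Yu97}} and then used as known background, so there is no internal argument to compare against. Judged against those sources, your outline has the correct skeleton: the extension $t$-modules $E_j$ given by $a\mapsto\bigl(\begin{smallmatrix}\rho_a&0\\ \delta_{j,a}&a(\theta)\end{smallmatrix}\bigr)$, the computation $\exp_{E_j}(z,w)=(\exp_\rho(z),F_{\delta_j}(z)+w)$ from the functional equation \eqref{E:QuasiPerFunEq}, and the packaging of the hypothetical $\ok$-relation as a functional $\mathbf{c}$ annihilating a logarithm vector $\mathbf{u}$ on $E=\GG_a\oplus(E^\circ)^m$ are all right. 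Moreover, because each $\delta_{j,t}$ lies in $\CC_\infty[\tau]\tau$, the differential $\partial t$ acts as the scalar $\theta$ on all of $\Lie E$, so every $\ok$-rational subspace is $\partial$-invariant and Yu's Sub-$t$-module Theorem legitimately produces a proper $H\subsetneq E$ over $\ok$ with $\mathbf{u}\in\Lie H\subseteq\ker\mathbf{c}$.

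The genuine gap is exactly the one you flag, and it is the heart of the proof rather than a loose end. After projecting $H$ onto $\rho^m$ (surjective, by $K_\rho$-independence of the $\lambda_i$), one must classify what the kernel $W:=H\cap(\GG_a\oplus\GG_a^{ms})$ can be and convert ``$H$ misses a direction in the $i$-th fiber'' into ``the combination $\sum_j c_{ij}\delta_j$ is inner, hence zero in $D(\rho)/D_i(\rho)$.'' That translation is the content of Brownawell's theory of minimal extensions of algebraic groups, and it is also where the extra $\GG_a$ summand recording the constant $1$ and the distinction between $D_i(\rho)$ and $D_{si}(\rho)$ must be handled with care. Your sentence ``Consequently, for each $i$ the intersection of $H$ with the $\GG_a^s$-fiber\dots cuts out a nontrivial $\CC_\infty$-linear combination\dots that must vanish'' asserts the output of that analysis rather than derives it. So the proposal is an accurate roadmap to the Brownawell--Yu argument, but the classification step it defers is precisely what makes those papers nontrivial, and it would have to be supplied for this to be a proof.
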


In analogy with a classical conjecture concerning the algebraic
independence of logarithms of algebraic numbers, among the Drinfeld
logarithms of algebraic functions one expects conjecturally that
linear relations over the multiplication ring of the Drinfeld module
in question are the only algebraic relations.

\begin{conjecture}[Brownawell-Yu; see {\cite[p.~323]{Brownawell98}}]
Under the hypotheses of Theorem~\ref{T:BrownYu}, the $m(s+1)$
quantities $\lambda_{1}, \dots, \lambda_{m}$, $\cup_{j=1}^s \{
F_{\delta_j}(\lambda_1), \dots, F_{\delta_j}(\lambda_m) \}$ are
algebraically independent over $\ok$.
\end{conjecture}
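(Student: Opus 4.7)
The plan is to reduce the conjecture to a Galois-group dimension computation via the main theorem of \cite{Papanikolas}. First I would attach to $\rho$ its associated dual $t$-motive $M_\rho$, and for each $i = 1, \dots, m$ construct an extension $X_i$ of the trivial $t$-motive $\mathbf{1}$ by $M_\rho$ whose period matrix contains $\lambda_i$ together with all the quasi-periodic values $F_{\delta_j}(\lambda_i)$ for $j = 1, \dots, s$. The entries of the extension class come from Anderson generating functions attached to $\lambda_i$ and to the biderivations $\delta_j$, evaluated at $t = \theta$, in the same spirit as the rank~$2$ construction behind Theorem~\ref{Thm1}. Setting $N := M_\rho \oplus \bigoplus_{i=1}^m X_i$, the period field of $N$ over $\ok$ then contains every quantity appearing in the conjecture, together with the entries of $P_\rho$.

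By Papanikolas's theorem the transcendence degree of this period field over $\ok$ equals $\dim \Gamma_N$, where $\Gamma_N$ is the associated difference Galois group. Because each $X_i$ is an extension of $\mathbf{1}$ by $M_\rho$, the Tannakian formalism yields a short exact sequence
\[
1 \to V \to \Gamma_N \to \Gamma_{M_\rho} \to 1,
\]
in which $V$ embeds as a $\Gamma_{M_\rho}$-stable subspace of $M_\rho^{\oplus m}$ viewed as a representation of $\Gamma_{M_\rho}$. The computation then splits into two parts: pin down $\Gamma_{M_\rho}$ (the periods portion of the paper, culminating in Theorem~\ref{Thm1} in the rank-$2$ no-CM case), and prove that $V$ fills out the ambient module. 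Brownawell-Yu (Theorem~\ref{T:BrownYu}) supplies the input for the second step: any proper $\Gamma_{M_\rho}$-stable subspace of $M_\rho^{\oplus m}$ containing $V$ would, through Papanikolas's dictionary between extensions and period relations, produce a nontrivial $\ok$-linear dependence among $1$, the $\lambda_i$, and the $F_{\delta_j}(\lambda_i)$, contradicting that theorem under the $K_\rho$-linear independence hypothesis on the $\lambda_i$.

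Everything hinges on showing $V$ is maximal, and this will be the main obstacle. Converting non-maximality of $V$ into an explicit linear relation among the period entries requires a precise description of the $\Gamma_{M_\rho}$-subrepresentations of $M_\rho^{\oplus m}$, which in turn demands sharp information about $\Gamma_{M_\rho}$ itself. In the rank-$2$ no-CM situation Theorem~\ref{Thm1} forces $\Gamma_{M_\rho} = \GL_2$; the invariant subspaces of a direct sum of copies of the standard representation are then indexed by linear subspaces of $k^m$, so a proper $V$ transparently corresponds to a nontrivial $K_\rho$-linear combination of the $\lambda_i$ being zero in $\Lambda_\rho$, which is excluded by hypothesis. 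For higher-rank Drinfeld modules one first needs the analogue of Theorem~\ref{Thm1} identifying $\Gamma_{M_\rho}$, and then a subrepresentation analysis refined enough to convert Brownawell-Yu into the maximality of $V$; both steps remain open in general, which is why the statement survives as a conjecture beyond the rank-$2$ setting of this paper.
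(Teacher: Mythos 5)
This statement is labeled as a conjecture; the paper itself does not prove it and cannot be said to have a ``proof'' of it. What the paper does prove is the rank-$2$, no-CM, odd-characteristic special case, namely Theorem~\ref{T:LogMain} via Lemma~\ref{L:redII}. You appear to understand this, since your final paragraph acknowledges the open status beyond rank~$2$, so I read your submission as a strategy outline rather than a purported proof.

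As a strategy outline, your broad architecture matches the paper's: attach extension $t$-motives $X_i$ of $\mathbf{1}$ by $M_\rho$ to each $\lambda_i$, form a $t$-motive whose Tannakian subcategory contains them all, reduce to a dimension count on the difference Galois group via Theorem~\ref{T:TrdegAndDim}, and argue the unipotent radical $V$ of $\Gamma_N \to \Gamma_{M_\rho}$ is maximal inside $(M_\rho^B)^{\oplus m}$. Two substantive points of disagreement with the paper's actual argument, though, are worth flagging. First, you propose to get the maximality of $V$ by invoking Theorem~\ref{T:BrownYu} (Brownawell--Yu) as a black box, asserting that non-maximality ``transparently'' produces a linear relation among $1$, the $\lambda_i$, and the $F_{\delta_j}(\lambda_i)$. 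This is precisely where the entire hard work of \S\ref{S:Logs} lives, and the paper does not in fact route through Brownawell--Yu at all. Instead Lemma~\ref{L:dimGamman}, Proposition~\ref{P:IsomRatPts}, and Lemma~\ref{L:GammanEqs} pin down explicit linear defining equations for a hypothetical non-full $\Gamma_{[n]}$; the torsor structure (Theorem~\ref{T:GalThy}) transports these to $Z_{\Psi_n}$; a twisting/residue argument shows the relevant coefficients are regular at $t=\theta$; and only after specializing at $t=\theta$ and applying Anderson's Legendre relation \eqref{E:Legendre} does a purely $k$-linear relation among $\omega_1, \omega_2, \lambda_1, \dots, \lambda_n$ (with no quasi-period values in it) appear. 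That cancellation of the $F_\tau$-terms is not automatic and is the crux of Lemma~\ref{L:redII}; your outline does not indicate how it happens. Second, Lemma~\ref{L:redII} needs the stronger hypothesis that $\omega_1, \omega_2, \lambda_1, \dots, \lambda_m$ are jointly $k$-linearly independent, whereas the conjecture only assumes the $\lambda_i$ are independent. The paper bridges this gap with the change-of-basis reduction inside the proof of Theorem~\ref{T:LogMain}, using \eqref{E:omegazkspan} and \eqref{E:okomegazFtau}; your outline omits this reduction entirely. Neither omission invalidates your high-level description, but both are load-bearing steps that a complete argument cannot leave implicit.
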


The second author has shown that the conjecture holds for the
Carlitz module (since $D(C)/D_i(C)=0$).

\begin{theorem}[Papanikolas {\cite[Thm.~1.2.6]{Papanikolas}}]
Let $C$ be the Carlitz $\FF_q[t]$-module defined by $C_t := \theta +
\tau$. Let $\lambda_{1}, \dots, \lambda_{m}$ be elements of
$\CC_{\infty}$ so that $\exp_C(\lambda_{i}) \in \ok$ for $i=1,
\dots, m$. If $\lambda_{1}, \dots, \lambda_{m}$ are linearly
independent over $k$, then they are algebraically independent over
$\ok$.
\end{theorem}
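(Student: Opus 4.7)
The strategy is to apply the main theorem of \cite{Papanikolas}, which converts the transcendence question into a computation of the dimension of a Galois group attached to a system of Frobenius difference equations. For each $i=1,\dots,m$ I would first construct an Anderson $t$-motive $M_i$ realizing an extension of the trivial $t$-motive $\mathbf{1}$ by the Carlitz $t$-motive $C$, chosen so that the rigid analytic trivialization of the associated pre-$t$-motive has the form
\[
\Psi_i = \begin{pmatrix} \Omega & 0 \\ \Omega L_i & 1 \end{pmatrix},
\]
where $\Omega$ is the Anderson--Thakur series whose specialization at $t=\theta$ is a nonzero algebraic multiple of $\tpi^{-1}$, and $L_i$ is an explicit series built out of the identity $\exp_C(\lambda_i) \in \ok$ with $L_i(\theta)$ a nonzero algebraic multiple of $\lambda_i$. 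The extension class of $M_i$ thus encodes the logarithm $\lambda_i$.

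Next I would form the fibered sum $M = M_1 +_C \cdots +_C M_m$ over $C$; its rigid analytic trivialization $\Psi$ is the $(m+1)\times(m+1)$ block-triangular matrix with $\Omega$ in the top-left entry, ones on the remaining diagonal, and $\Omega L_1, \dots, \Omega L_m$ stacked in the first column below the diagonal. Evaluating at $t=\theta$ gives
\[
\ok(\tpi,\lambda_1,\dots,\lambda_m) \subseteq \ok(\Psi(\theta)),
\]
so by the main theorem of \cite{Papanikolas} it suffices to prove $\dim \Gamma_M \ge m+1$, where $\Gamma_M$ is the Galois group of the pre-$t$-motive associated to $M$. The block-triangular shape forces $\Gamma_M \hookrightarrow \GG_m \ltimes \GG_a^m$; the projection onto the $\GG_m$ factor is the Galois group of the Carlitz $t$-motive alone, which is all of $\GG_m$ since $\tpi$ is transcendental over $k$. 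Consequently $\dim \Gamma_M = 1 + \dim V$, where $V := \Gamma_M \cap \GG_a^m$ is the unipotent radical.

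The main obstacle is to show that $V = \GG_a^m$. Via the correspondence in \cite{Papanikolas} between closed $\GG_m$-stable subgroups of $\GG_a^m$ and $\FF_q(t)$-subspaces of an appropriate $\operatorname{Hom}$ group, a proper such $V$ would force an $\FF_q(t)$-linear dependence among the extension classes $[M_1],\dots,[M_m] \in \operatorname{Ext}^1(\mathbf{1},C)$. The delicate step is to translate such a dependence back into a nontrivial $k$-linear relation $c_0 + c_1\lambda_1 + \cdots + c_m\lambda_m = 0$ among the Carlitz logarithms; the presence of a possible constant term $c_0$ has to be accounted for by exploiting that each $\lambda_i$ is a genuine Carlitz logarithm of an algebraic element rather than merely an abstract period, which is where the bulk of the work lies. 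Once $V = \GG_a^m$ is established, one concludes $\dim \Gamma_M = m+1$, so $\trdeg_{\ok}\ok(\tpi,\lambda_1,\dots,\lambda_m) = m+1$; since this field has $m+1$ generators, they are mutually algebraically independent over $\ok$, and in particular $\lambda_1,\dots,\lambda_m$ are algebraically independent over $\ok$.
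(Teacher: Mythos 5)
Your overall strategy is the right one and matches Papanikolas's: realize each logarithm as an extension of $\mathbf{1}$ by the Carlitz motive, assemble these into one motive $M$ whose Galois group $\Gamma_M$ sits inside $\GG_m\ltimes\GG_a^m$, and invoke the transcendence-degree theorem.

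There is, however, a genuine gap at the end, together with a misdirected guess about what the ``delicate step'' produces. You conclude unconditionally that $\dim\Gamma_M=m+1$, hence that $\trdeg_{\ok}\ok(\tpi,\lambda_1,\dots,\lambda_m)=m+1$. This is false under the stated hypotheses: take $m=1$ and $\lambda_1=\tpi$, so $\exp_C(\lambda_1)=0\in\ok$ and $\{\lambda_1\}$ is $k$-linearly independent, yet $\ok(\tpi,\lambda_1)=\ok(\tpi)$ has transcendence degree $1$. More generally, whenever $\tpi$ lies in the $k$-span of $\lambda_1,\dots,\lambda_m$ --- which the hypotheses permit --- one has $\dim\Gamma_M\leq m$, and $V=\GG_a^m$ is simply unattainable. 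Relatedly, a proper $\GG_m$-stable $V$, pushed through the torsor $Z_\Psi$ and specialized at $t=\theta$, does not give a relation $c_0 + \sum c_i\lambda_i = 0$ with a scalar $c_0$; the contributions of $\alpha_i = \exp_C(\lambda_i)$ cancel (just as they do in the passage from \eqref{E:ABtwist} to \eqref{E:Atheta} in the rank-$2$ argument), and what survives is a relation of the form $\ell_{11}(\theta)\tpi + \sum_i \ell_i(\theta)\lambda_i = 0$ whose extra term is a $k$-multiple of the period $\tpi$, not of $1$. That relation contradicts the hypothesis only if $\tpi,\lambda_1,\dots,\lambda_m$ are jointly $k$-linearly independent. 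The missing ingredient is therefore the reduction analogous to the passage from Lemma~\ref{L:redII} to Theorem~\ref{T:LogMain} in this paper: if $\tpi=\sum_i c_i\lambda_i$ with, say, $c_1\neq 0$, replace $\lambda_1$ by $\tpi$ (itself a Carlitz logarithm of $0\in\ok$), verify that $\tpi,\lambda_2,\dots,\lambda_m$ are $k$-linearly independent, carry out your Galois-group computation for that tuple, and then recover the algebraic independence of $\lambda_1,\dots,\lambda_m$ from the invertible $\ok$-linear substitution $\lambda_1=(\tpi-\sum_{i\geq 2}c_i\lambda_i)/c_1$. Without this reduction the proof does not cover the theorem as stated.
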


In the case of odd characteristic, using Theorem~\ref{Thm1} we prove
the above conjecture for logarithms of certain rank~$2$ Drinfeld
modules (later stated as Theorem~\ref{T:LogMain}).

\begin{theorem}\label{Thm2}
Suppose that $p$ is odd.  Let $\rho$ be a rank $2$ Drinfeld
$\FF_q[t]$-module defined over~$\ok$ without complex multiplication.
Let $\lambda_{1}, \dots, \lambda_{m}$ be elements of $\CC_{\infty}$
such that $\exp_{\rho}(\lambda_{i})\in \ok$ for $i=1, \dots, m$. If
$\lambda_{1}, \dots, \lambda_{m}$ are linearly independent over $k$,
then the $2m$ quantities,
\[
\lambda_{1}, \dots, \lambda_{m}, F_{\tau}(\lambda_{1}), \dots,
F_{\tau}(\lambda_{m}),
\]
are algebraically independent over $\ok$.
\end{theorem}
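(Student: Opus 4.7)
The plan is to invoke the main theorem of \cite{Papanikolas}, translating the transcendence problem into one of computing the dimension of a Galois group of a pre-$t$-motive built from $\rho$ together with $\lambda_1,\ldots,\lambda_m$. For each $i = 1, \ldots, m$, I would construct a pre-$t$-motive $\mathcal{E}_i$ fitting into a short exact sequence
\[
0 \to M_\rho \to \mathcal{E}_i \to \mathbf{1} \to 0,
\]
where $M_\rho$ is the $t$-motive attached to $\rho$, by solving an appropriate system of Frobenius difference equations encoding the logarithm $\lambda_i$ in the style of \cite{Papanikolas} (modified from the Carlitz case to accommodate the rank-$2$ motive $M_\rho$). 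The associated rigid analytic trivialization $\Psi_{\mathcal{E}_i}$ has entries that specialize at $t = \theta$ to generate the field $\ok(\omega_1, \omega_2, F_\tau(\omega_1), F_\tau(\omega_2), \lambda_i, F_\tau(\lambda_i))$. I would then take Baer sums to assemble a single pre-$t$-motive $\mathcal{E}$ fitting into
\[
0 \to M_\rho \to \mathcal{E} \to \mathbf{1}^{\oplus m} \to 0,
\]
whose specialized periods generate the full field $\ok(\omega_1, \omega_2, F_\tau(\omega_1), F_\tau(\omega_2), \lambda_1, \ldots, \lambda_m, F_\tau(\lambda_1), \ldots, F_\tau(\lambda_m))$.

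Next I would analyze the Galois group $\Gamma_\mathcal{E}$ of $\mathcal{E}$, which fits into an exact sequence of algebraic groups over $\oFqt$
\[
1 \to V \to \Gamma_\mathcal{E} \to \Gamma_{M_\rho} \to 1,
\]
where $\Gamma_{M_\rho} = \GL_{2/\oFqt}$ by Theorem~\ref{Thm1}, and $V$ is a closed subgroup of $\GG_a^{2m}$ stable under the adjoint action of $\Gamma_{M_\rho}$. Since each copy of $\GG_a^2$ inside $\GG_a^{2m}$ carries the standard (absolutely irreducible) representation of $\GL_2$, one concludes that $V = \mathrm{std} \otimes_{\oFqt} W$ for some $\oFqt$-subspace $W \subseteq \oFqt^m$, so the main point becomes to show $W = \oFqt^m$, giving $\dim V = 2m$ and $\dim \Gamma_\mathcal{E} = 4 + 2m$.

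The critical step, and principal obstacle, is to rule out $W \subsetneq \oFqt^m$. If it were proper, a nonzero element $(c_1, \ldots, c_m) \in W^\perp$ would correspond via the Tannakian formalism to a nontrivial morphism of $t$-motives that splits the extension in the corresponding direction; specializing the associated trivialization at $t = \theta$ (after clearing denominators so that the $c_i$ are regular and not all vanishing at $\theta$) yields a nonzero $\ok$-linear relation among $1, \lambda_1, \ldots, \lambda_m, F_\tau(\lambda_1), \ldots, F_\tau(\lambda_m)$. Since $\rho$ has no complex multiplication, $K_\rho = k$, so the hypothesis on the $\lambda_i$ places us within the scope of Theorem~\ref{T:BrownYu}, which forbids exactly such a relation---contradiction. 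The delicate point is precisely the clearing-of-denominators and nonvanishing-at-$\theta$ argument needed to manufacture a usable $\ok$-relation from an $\oFqt$-relation on the columns of $\Psi_\mathcal{E}$. Once $W = \oFqt^m$ is established, applying the main theorem of \cite{Papanikolas} yields $\trdeg_{\ok} \ok(\Psi_\mathcal{E}(\theta)) = 4 + 2m$; since the $2m+4$ quantities listed above generate this field, they are algebraically independent over $\ok$, and in particular so are the $2m$ quantities $\lambda_1, \ldots, \lambda_m, F_\tau(\lambda_1), \ldots, F_\tau(\lambda_m)$.
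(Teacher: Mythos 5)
Your high-level plan matches the paper's: build a $t$-motive $\mathcal{E}$ fitting into $0 \to M_\rho \to \mathcal{E} \to \mathbf{1}^{\oplus m} \to 0$, analyze $\Gamma_\mathcal{E}$ as an extension of $\GL_2$ by a vector group $V$, and argue $\dim V = 2m$. The observation that $V$ decomposes as (a copy of) the standard representation tensored with a multiplicity space $W$ is an elegant conceptual restatement of the dichotomy in the paper's Lemma~\ref{L:dimGamman}.

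The genuine gap is in the step you yourself flag as ``the critical step.'' You assert that a nonzero $(c_1,\dots,c_m) \in W^\perp$ produces, after specialization at $t=\theta$, a nontrivial $\ok$-linear relation \emph{solely among} $1, \lambda_1,\dots,\lambda_m, F_\tau(\lambda_1),\dots,F_\tau(\lambda_m)$, which you then rule out by Theorem~\ref{T:BrownYu}. But the torsor equations for $Z_{\Psi_\mathcal{E}}$ are linear in the entries of $\Psi_\mathcal{E}$, whose upper-left $2\times 2$ block is $\frac{\xi}{\tpi}$ times the period matrix (see \eqref{E:PsiRhoTheta}) and whose additional rows are $\bg_i^{\mathrm{tr}}\Psi$; specializing therefore produces relations that entangle $\omega_1, \omega_2, F_\tau(\omega_1), F_\tau(\omega_2)$ with the logarithm data. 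The paper's Lemma~\ref{L:redII} spends substantial effort on precisely this: it derives explicit defining equations $\phi_1, \phi_2$ for $\Gamma_{[n]}$ (Lemma~\ref{L:GammanEqs}), translates them to $Z_{\Psi_n}$, carries out a pole analysis to prove the coefficients $A, B$ are regular at $t=\theta$, and then uses Anderson's Legendre relation \eqref{E:Legendre} to eliminate the $F_\tau(\omega_i)$ terms --- arriving at a $k$-linear relation among $\omega_1, \omega_2, \lambda_1,\dots,\lambda_n$, with $\omega_1, \omega_2$ still present. That relation contradicts the hypotheses of Lemma~\ref{L:redII} (which demands $\omega_1, \omega_2, \lambda_1,\dots,\lambda_m$ be $k$-independent), not Brownawell--Yu, and the theorem as stated only assumes $k$-independence of the $\lambda_i$'s; the final proof of Theorem~\ref{T:LogMain} then does a nontrivial change-of-basis reduction (exploiting $\FF_q[\theta]$-linearity of $F_\tau$ on $\Lambda_\rho$) to handle the cases $\dim_k N = m+1$ or $m$. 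Your proposal omits this reduction entirely, and as written the appeal to Brownawell--Yu does not match the actual shape of the relation that the torsor structure produces. (A secondary, minor omission: the normalization $u=1$ for the leading coefficient of $\rho_t$ is assumed tacitly; the paper's Remark~\ref{R:GammaMisGL2} justifies it.)
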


It should be noted that Denis \cite[Thm.~2]{Denis} has shown that at
least $2$ of the $2m$ quantities in the theorem are algebraically
independent over $\ok$.

\subsection{Periods of elliptic curves and elliptic logarithms}
One can compare these results to classical conjectures about elliptic
curves.  Specifically, let $E$ be an elliptic curve over
$\overline{\QQ}$.  Let $\Lambda := \ZZ\omega_{1} + \ZZ\omega_{2}$ be
its period lattice in $\CC$, and let $\wp$ be the Weierstrass
$\wp$-function associated to $\Lambda$. One has the Weierstrass
$\zeta$-function satisfying $\zeta'(z)=-\wp(z)$. Then each $\eta_{i}
:= 2 \zeta(\frac{1}{2}\omega_{i})$, for $i=1$, $2$, is called a
quasi-period of $E$. The matrix
\[
P_{E}:= \begin{bmatrix}
\omega_{1} &\eta_{1} \\
\omega_{2} & \eta_{2}
\end{bmatrix}
\]
is the called period matrix of $E$ and the Legendre relation says
that $\det P_{E} = \pm 2 \pi \sqrt{-1}$.  Conjecturally, one expects
\[
  \trdeg_{\overline{\QQ}} \overline{\QQ}(P_E) =
\begin{cases}
4 & \textnormal{if $\End(E) = \ZZ$,} \\
2 & \textnormal{if $\End(E) \neq \ZZ$.}
\end{cases}
\]
This conjecture is known, by work of Chudnovsky, if $E$ has complex
multiplication, but in the non-CM case, one only knows, by work of
Masser, that the periods and quasi-periods are linearly independent
over $\overline{\QQ}$.

Furthermore, one can conjecture results on elliptic logarithms.  That
is, suppose $\lambda_1, \dots, \lambda_m \in \CC$ satisfy
$\wp(\lambda_i) \in \overline{\QQ}$.  If $\lambda_1, \dots, \lambda_m$
are linearly independent over the multiplication ring of $E$, then one
expects that the $2m$ quantities,
\[
  \lambda_1, \dots, \lambda_m, \zeta(\lambda_1), \dots, \zeta(\lambda_m),
\]
are algebraically independent over $\overline{\QQ}$.  However, the
best known results involve only linear independence over
$\overline{\QQ}$, due to Masser (elliptic logarithms in the CM
case), Bertrand-Masser (elliptic logarithms in the non-CM case), and
W\"ustholz (elliptic integrals of both the first and second kind).
See \cite[\S 4]{Waldschmidt} for more details.

\subsection{Outline of the paper}
Out main tool for proving algebraic independence is rooted in a
linear independence criterion of Anderson, Brownawell, and the
second author \cite{ABP}.  This criterion is key for proving an
algebraic independence theorem of the second author
\cite{Papanikolas}, which relates Galois groups of difference
equations with algebraic relations among their specializations at
$t=\theta$.  The basic structure in this theory is the notion of a
$t$-motive introduced by Anderson \cite{Anderson86}. Therefore, we
review the related background in \S\ref{S:Preliminaries}.  We use
Anderson generating functions to give explicit solutions of the
difference equations associated to a given Drinfeld module,
following a method of Pellarin~\cite{Pellarin}.  By unpublished work
of Anderson, such solutions are necessarily connected to the period
matrix of the Drinfeld module, and we observe this relationship in
this situation.  Hence by the main theorem of \cite{Papanikolas},
Theorem \ref{Thm1} is reduced to calculating the dimension of the
Galois group in question.

The proof of Theorem \ref{Thm1} is in \S\ref{S:PerQPer}. First we
consider the $t$-motive of any rank $2$ Drinfeld module, and then
assuming the characteristic is odd, we establish that the
tautological representation of its Galois group is semisimple.  We also
establish a $t$-motivic version of Tate's conjecture.  With these
properties in hand, we are able to show that if the given Drinfeld
module is without complex multiplication, then its tautological
Galois representation is absolutely irreducible. It follows that the
motivic Galois group in question can be calculated to be $\GL_2$,
and hence Theorem \ref{Thm1} is proved.

Finally, the proof of Theorem \ref{Thm2} occupies \S\ref{S:Logs}. We
construct a suitable $t$-motive such that its period matrix is
related to the logarithms of the algebraic functions in question.
Using the results for Theorem~\ref{Thm1}, we show that the Galois
group in the case of logarithms is an extension of $\GL_2$ by a
vector group.  By calculating its dimension, we prove
Theorem~\ref{Thm2}.

\subsection*{Acknowledgements} We thank G.~Anderson, D.~Thakur, and J.~Yu
for many helpful discussions during the preparation of this
manuscript.  We particularly thank G.~Anderson for sharing his
unpublished notes with us.  We further thank W.-T. Kuo for his
technical advice.  The first author thanks NSC, National Tsing-Hua
University, and NCTS for financial support, and he thanks Texas
A{\&}M University for its hospitality.

\section{Periods and $t$-motives} \label{S:Preliminaries}

\subsection{Notation and Preliminaries} \

\begin{longtable}{p{0.5truein}@{\hspace{5pt}$=$\hspace{5pt}}p{5truein}}
$\FF_q$ &the finite field of $q$ elements, where $q$ is a power of a
prime number $p$. \\
$k$ & $\FF_q(\theta)=$ the rational function field in a variable
$\theta$ over $\FF_q$. \\
$k_{\infty}$ & $\FF_q((\frac{1}{\theta}))$, the completion of  $k$
with respect to the place at infinity. \\
$\overline{k_{\infty}}$ & a fixed algebraic closure of $k_{\infty}$.
\\
$\ok$ & the algebraic closure of $k$ in $\overline{k_{\infty}}$. \\
$\CC_{\infty}$ & the completion of $\overline{k_{\infty}}$ with
respect to the canonical extension of the place at infinity. \\
$|\cdot|$ & a fixed absolute value for the completed field
$\CC_{\infty}$ with $|\theta| = q$. \\
$\TT$ & $\{f\in \CC_{\infty}[[t]] : \textnormal{$f$ converges on
$|t|\leq 1$} \}$ (the Tate algebra of $\CC_\infty$). \\
$\LL$ & the fraction field of $\TT$. \\
$\GG_{a}$ &  the additive group. \\
$\GL_{r/F}$ & for a field $F$, the $F$-group scheme of invertible
$r \times r$ matrices. \\
$\GG_{m}$ & $\GL_{1} =$ the multiplicative group.
\end{longtable}

For $n\in \ZZ$, given a Laurent series $f = \sum_{i}a_{i}t^{i} \in
\CC_{\infty}((t))$ we define the $n$-fold twist of $f$ by the rule
$f^{(n)}:=\sum_{i}a_{i}^{q^{n}}t^{i}$.  For each $n$, the twisting
operation is an automorphism of the Laurent series field
$\CC_{\infty}((t))$ stabilizing several subrings, e.g., $\ok[[t]]$,
$\ok[t]$ and $\TT$. More generally, for any matrix $B$ with entries
in $\CC_{\infty}((t))$ we define $B^{(n)}$ by the rule
${B^{(n)}}_{ij}:=B_{ij}^{(n)}$.

A power series $f = \sum_{i=0}^{\infty} a_{i}t^{i}\in
\CC_{\infty}[[t]]$ that satisfies
\[
\lim_{i \to \infty} \sqrt[i]{|a_{i}|_{\infty} }=0 \quad
\textnormal{and}\quad
[k_{\infty}(a_{0},a_{1},a_{2},\cdots):k_{\infty} ]< \infty
\]
is called an entire power series. As a function of $t$, such a power
series $f$ converges on all of $\CC_{\infty}$ and, when restricted
to $\overline{k_{\infty}}$, $f$ takes values in
$\overline{k_{\infty}}$. The ring of entire power series is denoted
by $\EE$.

\subsection{Tannakian category of $t$-motives}
In this section we follow \cite{Papanikolas} for background and
terminology of $t$-motives.  Let $\ok(t)[\sigma,\sigma^{-1}]$ be the
noncommutative ring of Laurent polynomials in $\sigma$ with
coefficients in $\ok(t)$, subject to the relation
\[
\sigma f=f^{(-1)}\sigma, \quad \forall\,f \in \ok(t).
\]
Let $\ok[t,\sigma]$ be the noncommutative subring of
$\ok(t)[\sigma,\sigma^{-1}]$ generated by $t$ and $\sigma$ over
$\ok$. An Anderson $t$-motive is a left $\ok[t,\sigma]$-module $\cM$
which is free and finitely generated both as a left $\ok[t]$-module
and a left $\ok[\sigma]$-module and which satisfies, for integers
$N$ sufficiently large,
\begin{equation}\label{E:AndersonLieCond}
(t-\theta)^{N}\cM\subseteq \sigma \cM.
\end{equation}
Given an Anderson $t$-motive $\cM$, let $\bm\in \Mat_{r\times
1}(\cM)$ comprise a $\ok[t]$-basis.  Then multiplication by $\sigma$
on $\cM$ is represented by $\sigma \bm=\Phi \bm$ for some matrix
$\Phi\in \Mat_{r}(\ok[t])$. Note that \eqref{E:AndersonLieCond}
implies that $\det\Phi=c(t-\theta)^{s}$ for some $c\in
\ok^{\times}$, $s \geq 0$, and hence $\Phi\in \GL_{r}(\ok(t))$. We
say that $\cM$ is rigid analytically trivial if there exists
$\Psi\in \GL_{r}(\TT)$ so that $\Psi^{(-1)}=\Phi \Psi$.  We note
that $\Psi$ is in fact in $\Mat_{r}(\EE)$ by
\cite[Prop.~3.1.3]{ABP}.

We say that a left $\ok(t)[\sigma,\sigma^{-1}]$-module is a
pre-$t$-motive if it is finite dimensional over $\ok(t)$. Let $\cP$
be the category of pre-$t$-motives. Morphisms in $\cP$ are left
$\ok(t)[\sigma,\sigma^{-1}]$-module homomorphisms. Then $\cP$ forms
an abelian category.

Given a pre-$t$-motive $P$ of dimension $r$ over $\ok(t)$, let
$\bp\in \Mat_{r\times 1}(P)$ be a $\ok(t)$-basis of $P$.
Multiplication by $\sigma$ on $P$ is given by $\sigma\bp=\Phi\bp $
for some matrix $\Phi\in \GL_{r}(\ok(t))$. We say that $P$ is rigid
analytically trivial if there exists $\Psi\in \GL_{r}(\LL)$ so that
$\Psi^{(-1)}=\Phi \Psi$. The matrix $\Psi$ is called a rigid
analytic trivialization for $\Phi$. It is unique up to right
multiplication by a matrix in $\GL_{r}(\FF_q(t))$ \cite[\S
4.1.6]{Papanikolas}.

The Laurent series field $\CC_\infty((t))$ carries the natural
structure of a left $\ok(t)[\sigma,\sigma^{-1}]$-module by setting
$\sigma(f) = f^{(-1)}$.  As such, the subfields $\LL$ and $\ok(t)$
are $\ok(t)[\sigma,\sigma^{-1}]$-submodules.  Similarly
$\CC_\infty[[t]] \supseteq \TT\supseteq \ok[t]$ have natural left
$\ok[t,\sigma]$-module structures.  For any $\sigma$-invariant
subring $F$ of $\CC_\infty((t))$, we denote by $F^{\sigma}$ the
subring of elements in $F$ fixed by $\sigma$. Then we have
\[
\TT^{\sigma} = \ok[t]^{\sigma} = \FF_q[t],\quad \LL^{\sigma} =
\ok(t)^{\sigma} = \FF_q(t).
\]
See \cite[Lem.~3.3.2]{Papanikolas} for more details.

Now consider $P^{\dag}:=\LL\otimes_{\ok(t)}P$, where we give
$P^{\dag}$ a left $\ok(t)[\sigma,\sigma^{-1}]$-module structure by
letting $\sigma$ act diagonally:
\[
\sigma(f\otimes m):=f^{(-1)}\otimes \sigma m, \quad \forall\, f\in
\ok(t), m\in P.
\]
Let
\[
P^{B} := (P^{\dag})^{\sigma} := \{\mu\in P^{\dag} : \sigma \mu=\mu
\}.
\]
Then $P^{B}$ is a vector space over $\FF_q(t)$.  The natural map
$\LL\otimes_{\FF_q(t)}P^{B}\rightarrow P^{\dag}$ is an isomorphism
if and only if $P$ is rigid analytically trivial \cite[\S
3.3]{Papanikolas}. In this situation, $\Psi^{-1}\bp$ is a canonical
$\FF_q(t)$-basis of $P^{B}$, where $\Psi$ is a rigid analytic
trivialization for $\Phi$.

Given an Anderson $t$-motive $\cM$, we obtain a pre-$t$-motive $M$
by setting
\[
M:=\ok(t)\otimes_{\ok[t]}\cM
\]
and extending the action of $\sigma$ in the natural way. Thus,
$\cM\mapsto M$ is a functor from the category of Anderson
$t$-motives to the category of pre-$t$-motives $\cP$.  Moreover, one
has that the natural map
\begin{equation}\label{E:HomMN}
\Hom_{\ok[t,\sigma]}(\cM,\cN)\otimes_{\FF_q[t]} \FF_q(t) \to
\Hom_{\ok(t)[\sigma,\sigma^{-1}]}(M,N)
\end{equation}
is an isomorphism of $\FF_q(t)$-vector spaces for any Anderson
$t$-motives $\cM$ and $\cN$.

We define the category $\cA^{I}$ of Anderson $t$-motives up to
isogeny to be the category whose objects are Anderson $t$-motives
and whose morphisms, for Anderson $t$-motives $\cM$ and $\cN$, are
$\Hom_{\cA^{I}}(\cM,\cN) := \Hom_{\ok[t,\sigma]}(\cM,\cN)
\otimes_{\FF_q[t]} \FF_q(t)$. We define the full subcategory
$\mathcal{AR}^{I}$ of rigid analytically trivial Anderson
$t$-motives up to isogeny by restriction. Letting $\mathcal{R}$ be
the category of rigid analytically trivial pre-$t$-motives,
$\mathcal{R}$ forms a neutral Tannakian category over $\FF_q(t)$
with the fiber functor $P\mapsto P^{B}$.  The functor $\cM\mapsto
M:\mathcal{AR}^{I} \to \mathcal{R}$ is fully faithful.

We define the category $\cT$ of $t$-motives to be the strictly full
Tannakian subcategory of $\mathcal{R}$ generated by the essential
image of the functor $\cM\mapsto M:\mathcal{AR}^{I}\rightarrow
\mathcal{R}$.

For any $t$-motive $M$, let $\cT_{M}$ be the strictly full Tannakian
subcategory of $\cT$ generated by $M$. As $\cT_{M}$ is a neutral
Tannakian category over $\FF_q(t)$, there is an affine algebraic
group scheme $\Gamma_{M}$ over $\FF_q(t)$ so that $\cT_{M}$ is
equivalent to the category of finite dimensional representations of
$\Gamma_{M}$ over $\FF_q(t)$. We call $\Gamma_{M}$ the Galois group
of $M$. The main theorem of \cite{Papanikolas} can be stated as
follows.

\begin{theorem}[Papanikolas
  {\cite[Thm.~1.1.7]{Papanikolas}}] \label{T:TrdegAndDim}
  Let $M$ be a $t$-motive and let $\Gamma_{M}$ be its Galois group.
  Suppose that $\Phi\in \GL_{r}(\ok(t))\cap \Mat_{r}(\ok[t])$
  represents multiplication by $\sigma$ on $M$ and that
  $\det\Phi=c(t-\theta)^{s}$, $c\in \ok^{\times}$. Let $\Psi$ be a
  rigid analytic trivialization of $\Phi$ in $\GL_{r}(\TT)\cap
  \Mat_{r}(\EE)$.  Finally, let $L$ be the subfield of
  $\overline{k_{\infty}}$ generated by all the entries of
  $\Psi(\theta)$ over $\ok$. Then
\[
\dim \Gamma_{M} = \trdeg_{\ok} L.
\]
\end{theorem}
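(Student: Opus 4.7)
The plan is to realize $\Gamma_M$ as a closed $\FF_q(t)$-subgroup of $\GL_r$ via Tannakian duality, then build from $\Psi$ a natural $\Gamma_M$-torsor $Z$ whose coordinate ring is generated by the entries of $\Psi$, and finally invoke the Anderson--Brownawell--Papanikolas linear independence criterion of \cite{ABP} to show that specialization at $t=\theta$ is dimension-preserving. Dimension counts on all three scales then coincide, yielding $\dim \Gamma_M = \trdeg_{\ok} L$.

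First I would fix the $\ok(t)$-basis $\bp$ of $M$ on which $\sigma$ acts by $\Phi$, so that $\Psi^{-1}\bp$ is a canonical $\FF_q(t)$-basis of $M^B$. The tautological representation of $\Gamma_M$ on $M^B$ in this basis realizes $\Gamma_M$ as a closed $\FF_q(t)$-subgroup of $\GL_{r/\FF_q(t)}$; by Tannakian duality every object of $\cT_M$ obtained from $M$ by tensors, duals, and subquotients carries a compatible $\Gamma_M$-action on its fiber.

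Next I would construct the torsor. Introduce a matrix of indeterminates $X=(X_{ij})$, let $\sigma$ act on $\ok(t)[X,1/\det X]$ by twisting coefficients and by $X \mapsto \Phi^{-1} X$, and note that $\Psi$ is $\sigma$-fixed under this action. Let $Z_\Psi \subseteq \GL_{r/\ok(t)}$ be the closed subscheme cut out by the kernel of the evaluation map $X \mapsto \Psi$. Two structural assertions are needed: (i) this defining ideal is generated by its $\sigma$-invariants, so $Z_\Psi$ descends to a closed subscheme $Z$ of $\GL_{r/\FF_q(t)}$; and (ii) $Z$ is a right $\Gamma_M$-torsor whose function field over $\FF_q(t)$ is generated by the entries of $\Psi$. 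Assertion (i) follows by Galois descent along $\FF_q(t)\hookrightarrow \ok(t)$ applied to the $\sigma$-stable ideal. For (ii), right translation $\Psi \mapsto \Psi\gamma$ by any $\gamma\in\GL_r(\FF_q(t))$ preserves the relation $\Psi^{(-1)}=\Phi\Psi$, and a Tannakian argument identifies the subgroup of such $\gamma$ that preserves the ideal of $Z$ with $\Gamma_M$ itself; transitivity then follows because any two rigid analytic trivializations of $\Phi$ differ by an element of $\GL_r(\FF_q(t))$.

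The last step is dimension counting. By construction $\dim Z = \trdeg_{\FF_q(t)}\FF_q(t)(\Psi) = \trdeg_{\ok(t)}\ok(t)(\Psi)$, which equals $\dim \Gamma_M$ since $Z$ is a $\Gamma_M$-torsor. The remaining identification $\trdeg_{\ok} L = \dim Z$ is where ABP does its essential work: it lifts every $\ok$-polynomial relation satisfied by the specialized entries $\Psi(\theta)$ to an $\ok[t]$-relation among the entries of $\Psi$, which by $\sigma$-invariance descends to a relation in the ideal of $Z$ over $\FF_q(t)$. The main obstacle I anticipate is precisely this polynomial lifting--descent step, since ABP is a linear statement; to handle it one must apply the criterion to the symmetric and tensor powers of $M$ (which again lie in $\cT_M$) and carefully track that their rigid analytic trivializations are polynomial expressions in the entries of $\Psi$, so that polynomial relations among specializations become linear relations among entries of a larger $\Psi$. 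Once that dictionary is in place, the three dimensions agree and the theorem follows.
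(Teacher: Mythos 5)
This theorem is cited by the paper from Papanikolas (Thm.~1.1.7) and is not reproved here, so there is no ``paper's own proof'' to compare against; the paper only reproduces the surrounding difference-Galois machinery in \S\ref{S:DiffGal} (Theorem~\ref{T:GalThy}). Your outline---realize $\Gamma_M$ inside $\GL_r$ via the tautological representation, build the scheme $Z_\Psi$ from the evaluation ideal, show it is a $\Gamma_\Psi$-torsor, and then use the ABP linear-independence criterion applied to tensor and symmetric powers to show that specialization at $t=\theta$ preserves transcendence degree---is a correct high-level sketch of Papanikolas's argument, and you have correctly identified the ABP step as the essential one.

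The one genuine misstep is your assertion (i), that the defining ideal of $Z_\Psi$ is generated by its $\sigma$-invariants so that $Z_\Psi$ descends to a closed subscheme $Z$ of $\GL_{r/\FF_q(t)}$ by ``Galois descent along $\FF_q(t)\hookrightarrow\ok(t)$.'' This is not Galois descent: the extension $\ok(t)/\FF_q(t)$ is transcendental, not Galois or even algebraic, and the $\sigma$-fixed subring of $\ok(t)[X,1/\det X]$ under the twisted action you describe is not $\FF_q(t)[X,1/\det X]$. In fact the torsor $Z_\Psi$ need not descend to $\FF_q(t)$ at all. What Papanikolas actually does, and what the paper reproduces in \S\ref{S:DiffGal}, is define the group $\Gamma_\Psi$ over $\FF_q(t)$ by an entirely separate construction: the matrix $\tilde\Psi = \Psi_1^{-1}\Psi_2 \in \GL_r(\LL\otimes_{\ok(t)}\LL)$ generates a subring $\Delta_\Psi$ of $\LL\otimes_{\ok(t)}\LL$, and $\Gamma_\Psi := \Spec \Delta_\Psi$. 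The equalities $\dim Z_\Psi = \dim \Gamma_\Psi = \trdeg_{\ok(t)}\Lambda_\Psi$ then come from Theorem~\ref{T:GalThy}(c),(d) without any descent of $Z_\Psi$. Your dimension count can be repaired by replacing the descent claim with this two-object construction; the rest of the sketch, in particular the tensor-power mechanism for converting polynomial relations at $t=\theta$ into $\FF_q[t]$-linear relations amenable to the ABP criterion, is the right idea and just needs fleshing out.
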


\subsection{Difference Galois groups} \label{S:DiffGal}
Let $M$ be a $t$-motive.  Let $\Phi\in \GL_{r}(\ok(t))$ represent
multiplication by $\sigma$ on $M$ and let $\Psi\in \GL_{r}(\LL)$ be
a rigid analytic trivialization for $\Phi$.  One can develop a
Galois theory for such systems of difference equations,
$\Psi^{(-1)}=\Phi \Psi$, and in turn relate its difference Galois
group to $\Gamma_M$ (see \cite[\S 4]{Papanikolas}).  We describe
this construction below.

We define a $\ok(t)$-algebra homomorphism $\nu_{\Psi} :
\ok(t)[X,1/\det X]\to \LL $ by setting $\nu_{\Psi}(X_{ij}) =
\Psi_{ij}$, where $X=(X_{ij})$ is an $r\times r$ matrix of
independent variables. We let
\[
\mathfrak{p}_{\Psi}:= \Ker \nu_{\Psi}, \quad \Sigma_{\Psi}:=\im
\nu_{\Psi} \subseteq \LL, \quad \Lambda_\Psi := \textnormal{fraction
field of $\Sigma_\Psi$ in $\LL$.}
\]
We set $Z_{\Psi}:= \Spec \Sigma_{\Psi}$.  In this way, $Z_{\Psi}$ is
the smallest closed subscheme of $\GL_{r/\ok(t)}$ so that $\Psi\in
Z_\Psi(\LL)$.

Now set $\Psi_{1}$, $\Psi_{2}\in \GL_{r}(\LL\otimes_{\ok(t)} \LL)$
to be the matrices such that $(\Psi_{1})_{ij}=\Psi_{ij}\otimes 1$
and $(\Psi_{2})_{ij}=1\otimes \Psi_{ij}$, and let
$\tilde{\Psi}:=\Psi_{1}^{-1}\Psi_{2}\in
\GL_{r}(\LL\otimes_{\ok(t)}\LL )$. We define an $\FF_q(t)$-algebra
homomorphism $\mu_{\Psi}: \FF_q(t)[X,1/\det X] \to
\LL\otimes_{\ok(t)}\LL$ by setting $\mu(X_{ij}) =
\tilde{\Psi}_{ij}$.  We let $\Delta_{\Psi}:=\im \mu_{\Psi}$ and set
\begin{equation}\label{E:GammaDef}
\Gamma_{\Psi} := \Spec\Delta_{\Psi}.
\end{equation}
Thus $\Gamma_{\Psi}$ is the smallest closed subscheme of
$\GL_{r/\FF_q(t)}$ such that $\tilde{\Psi} \in
\Gamma_{\Psi}(\LL\otimes_{\ok(t)}\LL)$.

\begin{theorem}[Papanikolas {\cite[\S 4]{Papanikolas}}]
\label{T:GalThy} Let $M$ be a $t$-motive. Let $\Phi\in
{\rm{GL}}_{r}(\ok(t))$ represent multiplication by $\sigma$ on $M$
and let $\Psi\in {\rm{GL}}_{r}(\LL)$ satisfy $\Psi^{(-1)}=\Phi
\Psi$.
\begin{enumerate}
\item[(a)] $\Gamma_{\Psi}$ is a closed $\FF_q(t)$-subgroup scheme of
${\rm{GL}}_{r/\FF_q(t)}$.
\item[(b)] $\Gamma_{\Psi}$ is absolutely irreducible and smooth over
$\oFqt$.
\item[(c)] $Z_{\Psi}$ is stable under right-multiplication by
$\Gamma_{\Psi}\times_{\FF_q(t)}\ok(t)$ and is a torsor for
$\Gamma_{\Psi}\times_{\FF_q(t)}\ok(t)$ over $\ok(t)$.
\item[(d)] $\dim \Gamma_{\Psi}= \trdeg_{\ok(t)} \Lambda_{\Psi}$.
\item[(e)] $\Gamma_{\Psi}$ is isomorphic to $\Gamma_{M}$ over $\FF_q(t)$.
\end{enumerate}
\end{theorem}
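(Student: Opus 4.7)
The plan is to follow the Picard--Vessiot style construction of \cite[\S 4]{Papanikolas}, developing a difference Galois theory attached directly to the matrix equation $\Psi^{(-1)} = \Phi \Psi$ and then matching it to the Tannakian construction of $\Gamma_M$. The observation that drives everything is that because $\Phi$ has entries in $\ok(t)$, one has $\Phi \otimes 1 = 1 \otimes \Phi$ in $\mathrm{GL}_r(\LL \otimes_{\ok(t)} \LL)$, whence
\[
\tilde\Psi^{(-1)} = (\Phi\Psi_1)^{-1}(\Phi\Psi_2) = \Psi_1^{-1}\Psi_2 = \tilde\Psi.
\]
So the entries of $\tilde\Psi$ lie in $(\LL\otimes_{\ok(t)}\LL)^{\sigma}$, and a short computation (using $\LL^\sigma = \FF_q(t)$) identifies the $\sigma$-invariants of the tensor product with $\FF_q(t) \otimes_{\FF_q(t)} \FF_q(t)$-structure compatible with the image of $\mu_{\Psi}$. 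This is what makes $\Gamma_\Psi$ genuinely an $\FF_q(t)$-object.

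For (a) I would exploit the cocycle identity $(\tilde\Psi)_{12}(\tilde\Psi)_{23} = (\tilde\Psi)_{13}$ in $\mathrm{GL}_r(\LL^{\otimes 3})$, which is immediate from $\tilde\Psi = \Psi_1^{-1}\Psi_2$ on writing things out with three tensor factors. This identity translates directly into the statement that $\ker \mu_\Psi$ is a Hopf ideal in $\FF_q(t)[X, 1/\det X]$: pulling the defining relations back through comultiplication, antipode and counit keeps them in the ideal. So $\Gamma_\Psi$ is a closed subgroup scheme. For (c) I would verify the torsor condition by showing that the coaction $\Sigma_\Psi \to \Sigma_\Psi \otimes_{\FF_q(t)} \Delta_\Psi$ coming from $\Psi_2 = \Psi_1 \tilde\Psi$, when paired with multiplication, yields an isomorphism
\[
\Sigma_\Psi \otimes_{\ok(t)} \Sigma_\Psi \;\iso\; \Sigma_\Psi \otimes_{\FF_q(t)} \Delta_\Psi.
\]
This is a direct algebraic manipulation once one knows $\Psi\in Z_\Psi(\LL)$ lifts to an $\LL$-point making the map an iso. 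Part (b) then follows: absolute irreducibility comes from the fact that $\Sigma_\Psi \hookrightarrow \LL$ is a domain that remains a domain after $\otimes \oFqt$ (one leverages a separability argument, or equivalently the existence of an $\LL$-point of $Z_\Psi$), and generic smoothness of $\Gamma_\Psi$ propagates to everywhere by homogeneity once $\Gamma_\Psi$ is known to be a group scheme over a field, combined with the torsor isomorphism.

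With (a)--(c) in hand, (d) is essentially a dimension count: a torsor has the same dimension as its acting group, and $\dim Z_\Psi = \trdeg_{\ok(t)}\Lambda_\Psi$ because $\Lambda_\Psi$ is the function field of $Z_\Psi$. Finally, for (e), the plan is to invoke Tannakian duality for $\cT_M$: the $\FF_q(t)$-module $M^B$ has the canonical basis $\Psi^{-1}\bp$, and $\Gamma_\Psi$ acts faithfully on this basis by right multiplication of coordinates, preserving exactly the algebraic relations among the entries of $\Psi$ used to cut out $\Gamma_\Psi$. One then checks that this representation, together with its natural compatibility with tensor constructions and duals inside $\cT_M$, realizes $\Gamma_\Psi$ as the automorphism group of the fiber functor $P \mapsto P^B$ restricted to $\cT_M$; by the universal property this is $\Gamma_M$.

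The main obstacle I expect is part (b), specifically smoothness in characteristic $p$, where one cannot simply invoke Cartier's theorem. The cleanest route is to first establish the torsor property in (c) and the existence of the $\LL$-point $\Psi$ of $Z_\Psi$, and then deduce smoothness of $\Gamma_\Psi$ from the smoothness of $\mathrm{GL}_r$ and the fact that $Z_\Psi$ is reduced (being a subscheme of $\mathrm{GL}_{r/\ok(t)}$ defined by the kernel of $\nu_\Psi$, which lands in a field). Once this structural foundation is laid, (d) and (e) are formal consequences.
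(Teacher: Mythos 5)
The paper does not prove this theorem; it is quoted from \cite[\S 4]{Papanikolas} (specifically the package of results around Thm.~4.2.11, Thm.~4.3.1, and Thm.~4.5.10 there), so there is no in-paper argument to compare against. Your sketch is a faithful outline of Papanikolas's construction: the observation $\tilde\Psi^{(-1)}=\tilde\Psi$ (valid because $\Phi\otimes 1 = 1\otimes\Phi$ over $\ok(t)$), the cocycle identity giving a Hopf ideal for (a), the torsor isomorphism $\Sigma_\Psi\otimes_{\ok(t)}\Sigma_\Psi\iso\Sigma_\Psi\otimes_{\FF_q(t)}\Delta_\Psi$ for (c), the dimension count for (d), and matching the induced action on $M^B$ with the Tannakian fundamental group for (e) are all the right moves.

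The one place your reasoning has the logic slightly backwards is part (b). You invoke \emph{generic} smoothness of $\Gamma_\Psi$ and then spread it out by homogeneity, but in characteristic $p$ generic smoothness of a reduced scheme of finite type over a non-perfect field is not automatic: it is equivalent to the scheme being \emph{geometrically} reduced, which is exactly the nontrivial thing to establish. (A reduced group scheme over an imperfect field can fail to be smooth.) The actual technical heart of \cite[\S 4]{Papanikolas} is therefore to show that $\Lambda_\Psi$ is a regular (i.e.\ separable with $\ok(t)$ algebraically closed in it) extension of $\ok(t)$, so that $Z_\Psi$ is geometrically integral over $\ok(t)$; then one passes through the torsor isomorphism and the $\LL$-point $\Psi\in Z_\Psi(\LL)$ to conclude that $\Gamma_\Psi\times_{\FF_q(t)}\oFqt$ is integral, and only then does smoothness follow (geometrically reduced group of finite type over a field $\Rightarrow$ smooth). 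Your parenthetical \emph{``one leverages a separability argument''} gestures at this, but the order of deduction should be: regularity of $\Lambda_\Psi/\ok(t)$ $\Rightarrow$ geometric integrality of $Z_\Psi$ $\Rightarrow$ (via torsor and $\LL$-point) geometric integrality of $\Gamma_\Psi$ $\Rightarrow$ smoothness; not smoothness first. With that reordering, and once the regularity of $\Lambda_\Psi$ is supplied (this is where the real work in \cite{Papanikolas} lies), the rest of your outline closes up.
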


\begin{remark}
Theorem \ref{T:GalThy} implies that $\Gamma_{\Psi}$ can be regarded
as a linear algebraic group over $ \FF_q(t)$.
\end{remark}

\subsection{Rank $2$ Drinfeld modules and $t$-motives} \label{S:Rank2}
Let $\rho$ be a rank $2$ Drinfeld $\FF_q[t]$-module defined over
$\ok$ given by $ \rho_{t}=\theta + \kappa \tau + u \tau^{2}$,
$\kappa$, $u \in \ok$. Put
\[
\Phi_{\rho}:=\begin{bmatrix}
  0 & 1 \\
  (t-\theta)/u^{(-2)} & -\kappa^{(-1)}/u^{(-2)}
\end{bmatrix}
\in \Mat_{2}(\ok[t]).
\]
Let $\cM_{\rho}$ be a left $\ok[t]$-module which is free of rank $2$
over $\ok[t]$ with a basis $\bm=[m_{1},m_{2}]^{\mathrm{tr}}\in
\Mat_{2\times 1}(\cM_{\rho})$. We give $\cM_{\rho}$ the structure of
a left $\ok[t,\sigma]$-module by defining $\sigma \bm=\Phi \bm$.

\begin{lemma} \label{L:Mrho}
The left $\ok[t,\sigma]$-module $\cM_{\rho}$ is an Anderson
$t$-motive.
\end{lemma}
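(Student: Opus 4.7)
The plan is to verify the three conditions in the definition of an Anderson $t$-motive: $\cM_\rho$ must be free and finitely generated both as a left $\ok[t]$-module and as a left $\ok[\sigma]$-module, and must satisfy $(t-\theta)^N \cM_\rho \subseteq \sigma \cM_\rho$ for some $N \geq 0$. The first condition holds immediately by construction, since $\bm$ is declared to be a $\ok[t]$-basis. For the third, I will compute $\det \Phi_\rho = -(t-\theta)/u^{(-2)}$, so that $(t-\theta)\Phi_\rho^{-1}$ has entries in $\ok[t]$; applying this matrix to both sides of $\sigma \bm = \Phi_\rho \bm$ then gives $(t-\theta)\bm \in \sigma \cM_\rho$, so $N = 1$ suffices.

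The bulk of the proof, and the expected main obstacle, is establishing the left $\ok[\sigma]$-module structure. I will show that $\cM_\rho$ is free of rank one with basis $\{m_1\}$. Reading off $\sigma \bm = \Phi_\rho \bm$ yields $\sigma m_1 = m_2$, and multiplying the second row by $u^{(-2)}$ gives $u^{(-2)} \sigma^2 m_1 = (t-\theta) m_1 - \kappa^{(-1)} \sigma m_1$. Rearranging, I obtain
\[
t m_1 = \bigl( \theta + \kappa^{(-1)} \sigma + u^{(-2)} \sigma^2 \bigr) m_1 \in \ok[\sigma] m_1.
\]
Because $t$ commutes with $\sigma$ and with every element of $\ok$ in $\ok[t,\sigma]$, the left $\ok[\sigma]$-submodule $\ok[\sigma] m_1 \subseteq \cM_\rho$ is stable under multiplication by $t$, and hence is a $\ok[t,\sigma]$-submodule. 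Since it contains both $m_1$ and $m_2 = \sigma m_1$, it contains $\ok[t] m_1 + \ok[t] m_2 = \cM_\rho$. Therefore $\cM_\rho = \ok[\sigma] m_1$.

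The most delicate step is proving freeness. The annihilator $I$ of $m_1$ is a left ideal of $\ok[\sigma]$. Because $\ok$ is algebraically closed and hence perfect, $c \mapsto c^{(-1)}$ is an automorphism of $\ok$, so the skew polynomial ring $\ok[\sigma]$ admits right division with remainder in $\sigma$-degree and is in particular a principal left ideal domain. Writing $I = \ok[\sigma] f_0$, if $f_0$ were nonzero then the quotient $\cM_\rho \cong \ok[\sigma]/\ok[\sigma] f_0$ would be an $\ok$-vector space of dimension $\deg_\sigma f_0 < \infty$ (using remainder representatives of degree less than $\deg_\sigma f_0$). But $\cM_\rho$ is free of rank $2$ over $\ok[t]$ and so has infinite $\ok$-dimension, a contradiction. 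Hence $f_0 = 0$, $I = 0$, and $\cM_\rho$ is free of rank $1$ over $\ok[\sigma]$.
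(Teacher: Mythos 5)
Your proof is correct and follows the same overall strategy as the paper: check $\ok[t]$-freeness (immediate), check $(t-\theta)\cM_\rho\subseteq\sigma\cM_\rho$ (since $\det\Phi_\rho$ is a unit times $t-\theta$), and establish $\ok[\sigma]$-freeness by showing $\cM_\rho=\ok[\sigma]m_1$ via the identity $tm_1=(\theta+\kappa^{(-1)}\sigma+u^{(-2)}\sigma^2)m_1$.

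There are two places where your argument is slightly different, and in both cases more careful. First, to get $\cM_\rho=\ok[\sigma]m_1$, the paper iterates: it shows $tm_2\in\ok[\sigma]m_1$, then $t^2m_1\in\ok[\sigma]m_1$, and says ``continuing this argument.'' You instead observe in one stroke that $\ok[\sigma]m_1$ is stable under multiplication by $t$ because $t$ is central in $\ok[t,\sigma]$ and $tm_1\in\ok[\sigma]m_1$; since the submodule contains $m_1$ and $m_2$ it is all of $\cM_\rho$. Second, and more substantively, the paper justifies freeness of $\cM_\rho$ over $\ok[\sigma]$ with only the remark that $\sigma$ acts injectively on $\cM_\rho$. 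Taken literally that is not sufficient: a cyclic module $\ok[\sigma]/\ok[\sigma](1-\sigma)$ has $\sigma$ acting bijectively but is not free. Your argument fills this gap correctly: $\ok[\sigma]$ is a left Euclidean domain (since $c\mapsto c^{(-1)}$ is an automorphism of the perfect field $\ok$), so the annihilator of $m_1$ is principal, and a nonzero annihilator would force $\cM_\rho$ to be finite-dimensional over $\ok$, contradicting its being free of rank $2$ over $\ok[t]$. This is the ingredient the paper's terse justification implicitly relies on, and you make it explicit.
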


\begin{proof}
We claim that $\cM_{\rho}=\ok[\sigma]m_{1}$, whence $\cM_{\rho}$ is
free over $\ok[\sigma]$ since
$\sigma:\cM_{\rho}\rightarrow\cM_{\rho}$ is injective. Note that
since $m_2 = \sigma m_{1}$, we have $m_{2}\in \ok[\sigma]m_{1}$.
Thus, $t m_{1}\in \ok[\sigma]m_{1}$ also. Using that $\sigma t
m_{1}=t  \sigma m_{1}\in \ok[\sigma]m_{1}$, we have that $t m_{2}$
lies in $\ok[\sigma]m_{1}$.  Similarly, $\sigma t m_{2}= t\sigma
m_{2} \in\ok[\sigma]m_{1}$ shows that
$t^{2}m_{1}\in\ok[\sigma]m_{1}$. Continuing this argument we see
that $t^{n}m_{1}$, $t^{n}m_{2}\in \ok[\sigma]m_{1}$ for all $n \geq
0$. This proves our claim. On the other hand, we find easily that
$(t-\theta) \cM_{\rho} \subseteq \sigma \cM_{\rho}$, and hence
$\cM_{\rho}$ is an Anderson $t$-motive.
\end{proof}

Following unpublished work of Anderson, we will show that the
assignment $\rho \mapsto \cM_{\rho}$ is a covariant functor from the
category of Drinfeld modules of rank $2$ over $\ok$ to the category
of Anderson $t$-motives.  First we recall Ore's $\tau$-adjoint
operation \cite[\S 1.7]{Goss},
\[
  f \mapsto f^* : \ok[\tau] \to \ok[\sigma],
\]
where if $f = \sum a_i \tau^i$, then
\[
  f^* := \sum a_i^{(-i)} \sigma^i.
\]
One has that $(fg)^* = g^* f^*$ for all $f$, $g \in \ok[\tau]$, and
moreover $f \mapsto f^*$ defines an anti-isomorphism of rings
$\ok[\tau] \to \ok[\sigma]$.

Suppose $\rho$, $\rho' : \FF_q[t] \to \ok[t]$ are Drinfeld modules
of rank $2$ over $\ok$, and assign $\kappa'$, $u'$, $\{ m_1', m_2' \}$,
$\Phi'$, and $\cM_{\rho'}$ as in the beginning of the section. Now
$e \in \ok[\tau]$ defines a morphism $e: \rho \to \rho'$ if
\[
  e \rho_a = \rho'_a e, \quad \forall\ a \in \FF_q[t].
\]
As we observed in the proof of Lemma~\ref{L:Mrho}, $\cM_\rho =
\ok[\sigma]m_1$ and $\cM_{\rho'} = \ok[\sigma]m_1'$. Define a
$\ok[\sigma]$-linear function
\[
\varepsilon : \cM_\rho \to \cM_{\rho'}
\]
such that $\varepsilon(m_1) = e^* m_1'$.

\begin{lemma}
The map $\varepsilon : \cM_\rho \to \cM_{\rho'}$ is a morphism of
Anderson $t$-motives.
\end{lemma}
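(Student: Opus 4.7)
The plan is to verify the two compatibilities required of a morphism of Anderson $t$-motives: first, that $\varepsilon$ is $\ok[\sigma]$-linear (which holds by construction), and second, that $\varepsilon$ commutes with multiplication by $t$. Since $\cM_\rho = \ok[\sigma]m_1$ and $t$ is central in $\ok[t,\sigma]$, every $x\in\cM_\rho$ can be written as $x = s m_1$ with $s\in\ok[\sigma]$, so it suffices to check $\varepsilon(t m_1)=t\varepsilon(m_1)$; granting this, $\varepsilon(tx)=\varepsilon(s\cdot t m_1)=s\varepsilon(t m_1)=s t\varepsilon(m_1)=t s\varepsilon(m_1)=t\varepsilon(x)$.

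First I would compute $tm_1$ explicitly inside $\cM_\rho$ from the matrix $\Phi_\rho$. The relation $\sigma\bm=\Phi_\rho\bm$ gives $\sigma m_1=m_2$ and $\sigma m_2 = \tfrac{t-\theta}{u^{(-2)}}m_1-\tfrac{\kappa^{(-1)}}{u^{(-2)}}m_2$. Substituting the first into the second and clearing denominators yields $u^{(-2)}\sigma^2 m_1+\kappa^{(-1)}\sigma m_1-(t-\theta)m_1=0$, whence
\[
t m_1 = \bigl(\theta+\kappa^{(-1)}\sigma+u^{(-2)}\sigma^2\bigr)m_1 = \rho_t^{*}\, m_1,
\]
where $\rho_t^{*}$ is the $\tau$-adjoint of $\rho_t$. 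The analogous identity $t m_1'=(\rho_t')^{*}m_1'$ holds in $\cM_{\rho'}$.

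The core input is the interaction of the $\tau$-adjoint with the defining identity $e\rho_t=\rho_t' e$: applying $(\cdot)^{*}$ and using the anti-isomorphism property $(fg)^{*}=g^{*}f^{*}$ produces $\rho_t^{*} e^{*}=e^{*}(\rho_t')^{*}$ in $\ok[\sigma]$. Putting everything together,
\[
\varepsilon(tm_1)=\varepsilon(\rho_t^{*}m_1)=\rho_t^{*}e^{*}m_1'=e^{*}(\rho_t')^{*}m_1'=e^{*}\cdot tm_1'=t\cdot e^{*}m_1'=t\cdot\varepsilon(m_1),
\]
where we used $\ok[\sigma]$-linearity of $\varepsilon$ in the second equality, the adjoint identity in the third, the computation of $tm_1'$ in the fourth, and the centrality of $t$ in $\ok[t,\sigma]$ in the penultimate one. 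Combined with the reduction in the first paragraph this shows that $\varepsilon$ respects the full $\ok[t,\sigma]$-action.

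The only real obstacle is the bookkeeping of Frobenius twists around the $\tau$-adjoint: one must keep track that $(\cdot)^{*}$ reverses multiplication and shifts the twists on the coefficients of $e$ and $\rho_t$, while $t$ itself is fixed because it is central in $\ok[t,\sigma]$. Once the intrinsic description $tm_1=\rho_t^{*}m_1$ is established and the adjoint identity $\rho_t^{*}e^{*}=e^{*}(\rho_t')^{*}$ has been extracted from $e\rho_t=\rho_t'e$, no further computation is needed.
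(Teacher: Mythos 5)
Your proof is correct and follows essentially the same approach as the paper: reduce to checking commutation with $t$ via $\ok[\sigma]$-linearity, establish $tm_1=\rho_t^*m_1$ and $tm_1'=(\rho_t')^*m_1'$, and push the identity $e\rho_t=\rho_t'e$ through the anti-isomorphism $(\cdot)^*$. You merely make explicit the verification of $tm_1=\rho_t^*m_1$ from $\Phi_\rho$ and the derivation of $\rho_t^*e^*=e^*(\rho_t')^*$, which the paper leaves as "we check easily" and leaves implicit, respectively.
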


\begin{proof}
By definition $\varepsilon$ is $\ok[\sigma]$-linear, so it suffices
to show that it commutes with $t$.  We check easily that
\begin{equation} \label{E:tm1}
  t m_1 = (\rho_t)^* m_1, \quad t m_1' = (\rho'_t)^* m_1'.
\end{equation}
Therefore,
\[
  \varepsilon(t m_1) = (\rho_t)^* \varepsilon(m_1) = (\rho_t)^* e^*
  m_1' = e^* (\rho'_t)^* m_1' = e^* t m_1' = t e^* m_1' = t
  \varepsilon(m_1),
\]
and so $\varepsilon$ is $\ok[t]$-linear.
\end{proof}

It is simple to check that the construction of $\varepsilon$ behaves
well under composition of morphisms, and thus we have defined a
functor $\rho \mapsto \cM_\rho$ as desired.

\begin{proposition} \label{P:DrinAnd}
The functor $\rho \mapsto \cM_\rho$ from rank $2$ Drinfeld modules
over $\ok$ to the category of Anderson $t$-motives is fully
faithful.
\end{proposition}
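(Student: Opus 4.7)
The plan is to construct an explicit two-sided inverse to the assignment $e \mapsto \varepsilon$ described just above the proposition, thereby showing that the natural map
\[
\Hom(\rho,\rho') \to \Hom_{\ok[t,\sigma]}(\cM_\rho,\cM_{\rho'}),\quad e \mapsto \varepsilon,
\]
is a bijection. Since both sides are already known to be $\FF_q[t]$-modules, this will establish fullness and faithfulness simultaneously.

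The first key observation, already implicit in the proof of Lemma~\ref{L:Mrho}, is that $\cM_\rho$ and $\cM_{\rho'}$ are free of rank one as left $\ok[\sigma]$-modules, generated by $m_1$ and $m_1'$ respectively.  Consequently every $\ok[\sigma]$-linear map $\varepsilon : \cM_\rho \to \cM_{\rho'}$ is determined by a unique element $f \in \ok[\sigma]$ via $\varepsilon(m_1) = f m_1'$, and I may define a candidate inverse by $\varepsilon \mapsto f^*$, where $*$ is Ore's $\tau$-adjoint.

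To verify that this is well-defined and lands in $\Hom(\rho,\rho')$, I would use the identities \eqref{E:tm1} and the fact that $t$ commutes with $\sigma$ to convert the $\ok[t]$-linearity condition $\varepsilon(tm_1) = t\varepsilon(m_1)$ into
\[
(\rho_t)^* f \cdot m_1' = f (\rho'_t)^* \cdot m_1',
\]
and then invoke rank-one freeness of $\cM_{\rho'}$ over $\ok[\sigma]$ a second time to conclude the equality $(\rho_t)^* f = f (\rho'_t)^*$ inside the ring $\ok[\sigma]$. Applying $*$, which is an anti-isomorphism $\ok[\sigma] \to \ok[\tau]$, this becomes $f^* \rho_t = \rho'_t f^*$ in $\ok[\tau]$. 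Hence $e := f^*$ satisfies $e \rho_t = \rho'_t e$, and therefore $e \rho_a = \rho'_a e$ for every $a \in \FF_q[t]$ since each $\rho_a$ is an $\FF_q$-polynomial in $\rho_t$; thus $e$ is a genuine morphism $\rho \to \rho'$.

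Running the same argument in reverse shows that the assignments $e \mapsto \varepsilon$ and $\varepsilon \mapsto e$ are mutually inverse, completing the proof. There is no substantial obstacle here: the entire content of the proposition is the combination of rank-one generation of $\cM_\rho$ over $\ok[\sigma]$ with the ring anti-isomorphism $\ok[\tau] \cong \ok[\sigma]$ given by $*$. The only point requiring a small amount of care is to confirm that $m_1$ really does generate $\cM_\rho$ over $\ok[t,\sigma]$ and is free over $\ok[\sigma]$, both of which were already recorded in Lemma~\ref{L:Mrho}.
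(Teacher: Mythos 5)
Your proposal is correct and follows essentially the same route as the paper: both exploit the rank-one freeness of $\cM_{\rho'}$ over $\ok[\sigma]$ to write the image of $m_1$ as $e^* m_1'$, then use the identities \eqref{E:tm1} together with $t$-linearity to force $e\rho_t = \rho'_t e$. The only cosmetic difference is that you phrase the argument as constructing an explicit two-sided inverse (and write $f^*$ for the inverse anti-isomorphism applied to $f \in \ok[\sigma]$, a harmless abuse), whereas the paper verifies faithfulness and fullness separately in one shot.
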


\begin{proof}
We continue with the notation as above for two rank $2$ Drinfeld
modules $\rho$, $\rho'$.  Certainly $\varepsilon = 0$ if and only if
$e = 0$, and so faithfulness is immediate.  Now suppose $h :
\cM_\rho \to \cM_{\rho'}$ is a morphism.  Then for some $e \in
\ok[\tau]$, $h(m_1) = e^* m_1'$.  Immediately from \eqref{E:tm1} we
see that
\[
  (\rho_t)^* e^* m_1' = h((\rho_t)^* m_1) =
  h(tm_1) = th(m_1) = t e^* m_1' = e^* (\rho'_t)^* m_1'.
\]
Thus $e \rho_t = \rho'_t e$, and $e : \rho \to \rho'$ is a morphism
of Drinfeld modules.  Moreover, $h$ is the morphism $\cM_\rho \to
\cM_{\rho'}$ associated to $e$.
\end{proof}

\begin{remark} \label{R:EndRho}
By Proposition~\ref{P:DrinAnd}, we see for a rank $2$ Drinfeld
module $\rho$ that the ring $\End(\rho)$ is isomorphic to
$\End_{\ok[t,\sigma]}(\cM_{\rho})$. In particular, when $\rho$ does
not have complex multiplication, by \eqref{E:HomMN} we have
$\End_{\ok(t)[\sigma,\sigma^{-1}]}(M_{\rho})=\FF_q(t)$, where
$M_{\rho}:=\ok(t)\otimes_{\ok[t]}\cM_{\rho}$.
\end{remark}

\subsection{Rigid analytic trivializations of rank $2$ Drinfeld
modules}
We now review how to create a rigid analytic trivialization
$\Psi_{\rho}\in \GL_{r}(\TT)\cap \Mat_{2}(\EE)$ for $\Phi_{\rho}$
and connect it with the period matrix of $\rho$ (for more details,
see \cite[\S 4.2]{Pellarin}). For simplicity we assume that $\rho :
\FF_q[t] \to \ok[\tau]$ satisfies
\[
  \rho_t = \theta + \kappa\tau + \tau^2, \quad \kappa \in \ok.
\]
For applications we do not lose any generality (see
Remark~\ref{R:GammaMisGL2}).

Let $\exp_{\rho}(z):=z+\sum_{i=1}^{\infty}\alpha_{i}z^{q^{i}}$ be
the exponential function of $\rho$. Given $u\in \CC_{\infty}$ we
consider the Anderson generating function
\begin{equation}\label{E:AndGF}
f_{u}(t):=\sum_{i=0}^{\infty} \exp_{\rho} \biggl(
\frac{u}{\theta^{i+1}} \biggr)t^{i} =
\sum_{i=0}^{\infty}\frac{\alpha_{i} u^{q^{i}} }{ \theta^{q^{i}}-t }
\in \TT
\end{equation}
and note that $f_{u}(t)$ is a meromorphic function on
$\CC_{\infty}$.  It has simple poles at $\theta$, $\theta^{q},
\ldots$ with residues $-u$, $-\alpha_{1} u^{q}, \ldots$
respectively.  Using that $\rho_{t}(\exp_{\rho}
(\frac{u}{\theta^{i+1}})) = \exp_{\rho}(\frac{u}{\theta^{i}})$, we
have
\begin{equation}\label{E:fu1}
\kappa f_{u}^{(1)}(t)+f_{u}^{(2)} =(t-\theta)f_{u}(t) +
\exp_{\rho}(u).
\end{equation}
Since $f_{u}^{(m)}(t)$ converges away from $\{
\theta^{q^{m}},\theta^{q^{m+1}},\ldots \}$ and
$\Res_{t=\theta}f_{u}(t)=-u$, we have
\begin{equation}\label{E:fu2}
\kappa f_{u}^{(1)}(\theta)+f_{u}^{(2)}(\theta) =-u+\exp_{\rho}(u)
\end{equation}
by specializing \eqref{E:fu1} at $t=\theta$.

Fixing an $\FF_q[\theta]$-basis $\{ \omega_{1},\omega_{2}\}$ of
$\Lambda_{\rho}:=\Ker \exp_{\rho}$, we set
$f_{i}:=f_{\omega_{i}}(t)$ for $i=1$, $2$.  Recall the analogue of
the Legendre relation proved by Anderson,
\begin{equation}\label{E:Legendre}
\omega_{1} F_{\tau}(\omega_{2})- \omega_{2}
F_{\tau}(\omega_{1})=\tpi/ \xi,
\end{equation}
where $\xi\in \overline{\FF_q}^{\times}$ satisfies $\xi^{(-1)}=-\xi$
and $\tpi$ is a generator of the period lattice of the Carlitz
module $C$.  We pick a suitable choice $(-\theta)^{\frac{1}{q-1}}$
of the $(q-1)$-st root of $-\theta$ so that
$\Omega(\theta)=\frac{-1}{\tpi}$, where
\[
\Omega(t):=(-\theta)^{\frac{-q}{q-1}} \prod_{i=1}^{\infty} \biggl(
1-\frac{t}{\theta^{q^{i}}} \biggr) \in \EE,
\]
(see \cite[Cor.~5.1.4]{ABP}).  Now put
\[
\Psi_{\rho}:=\xi \Omega \begin{bmatrix}
  -f_{2}^{(1)} & f_{1}^{(1)} \\
  \kappa f_{2}^{(1)}+f_{2}^{(2)} & -\kappa f_{1}^{(1)}-f_{1}^{(2)}
\end{bmatrix}.
\]
By \eqref{E:fu1} we have $\Psi_{\rho}^{(-1)} =
\Phi_{\rho}\Psi_{\rho}$, and thus $\det(\Psi_\rho)$ is a constant
multiple of $\Omega$.  This implies that $\Psi_\rho \in \GL_2(\TT)$
and that $\cM_\rho$ is rigid analytically trivial.  Hence
$M_{\rho}:= \ok(t) \otimes_{\ok[t]}\cM_{\rho}$ is a $t$-motive.
Since
\[
F_{\tau}(\omega_{i})=\sum_{j=0}^{\infty} {\exp_{\rho} \biggl(
\frac{\omega_{i}}{\theta^{j+1}} \biggr)}^{q}\theta^{j}, \quad i=1,2,
\]
(cf. \cite[\S 6.4]{Thakur}), by evaluating $\Psi_{\rho}$ at
$t=\theta$ we obtain
\begin{equation}\label{E:PsiRhoTheta}
\Psi_{\rho}(\theta)= \frac{\xi}{\tpi} \begin{bmatrix}
F_{\tau}(\omega_{2}) & -F_{\tau}(\omega_{1}) \\
\omega_{2}  & -\omega_{1}
\end{bmatrix}.
\end{equation}
Hence $\Psi_{\rho}^{-1}(\theta)=P_{\rho}$ (cf.\ \eqref{E:DefPrho})
and $\ok(\Psi_{\rho}(\theta)) = \ok(\omega_{1}, \omega_{2},
F_{\tau}(\omega_{1}), F_{\tau}(\omega_{2}))$. Therefore by
Theorem~\ref{T:TrdegAndDim} we have the following equivalence
\begin{equation}\label{E:GL2trdeg4}
\Gamma_{M_{\rho}}=\GL_{2} \Leftrightarrow \trdeg_{\ok} \ok(
\omega_{1}, \omega_{2}, F_{\tau}(\omega_{1}),
F_{\tau}(\omega_{2}))=4.
\end{equation}

\section{Algebraic independence of periods and quasi-periods}
\label{S:PerQPer}

\subsection{The structure of the motivic Galois group $\Gamma_{M}$}
\label{SS:structGM} {From} now on, we fix a rank~$2$ Drinfeld
$\FF_q[t]$-module $\rho$ over $\ok$, given by $\rho_{t}:=\theta+
\kappa \tau+\tau^{2}$, $\kappa\in \ok$.  As in \S\ref{S:Rank2}, we
put
\[
\Phi:=\Phi_{\rho}:=\left[
\begin{matrix}
  0 & 1 \\
 (t-\theta)  & -\kappa^{(-1)} \\
\end{matrix}
\right],
\]
and let $M:=M_{\rho}$ be its associated $t$-motive, with
$\ok(t)$-basis $\bm:=[m_{1},m_{2}]^{\mathrm{tr}} \in \Mat_{2\times
1}(M)$.

\begin{lemma}\label{L:simplicityofM}
Let $\rho$ and $(M,\bm,\Phi)$ be defined as above.  Then $M$ is
simple as a left $\ok(t)[\sigma,\sigma^{-1}]$-module.
\end{lemma}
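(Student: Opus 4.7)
The plan is to argue by contradiction. Since $\dim_{\ok(t)} M = 2$, any non-trivial proper $\ok(t)[\sigma,\sigma^{-1}]$-submodule $N \subsetneq M$ would have to be one-dimensional, so $N = \ok(t) \cdot n$ for some $n = \alpha m_{1} + \beta m_{2}$ with $\alpha,\beta \in \ok(t)$. Because $\sigma$ is invertible on $M$ (as $\det\Phi = -(t-\theta) \in \ok(t)^{\times}$), we must have $\sigma n = a\,n$ for some $a \in \ok(t)^{\times}$.

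First, I would compute $\sigma n$ using $\sigma m_{1} = m_{2}$ and $\sigma m_{2} = (t-\theta)\,m_{1} - \kappa^{(-1)}\,m_{2}$ (read off from $\Phi$), together with the $\sigma$-semilinearity $\sigma(fx) = f^{(-1)}\sigma x$, and then match the coefficients of $m_{1}$ and $m_{2}$ with those of $a\,n$. This yields the pair of scalar equations
\begin{align*}
a\,\alpha &= (t-\theta)\,\beta^{(-1)},\\
a\,\beta  &= \alpha^{(-1)} - \kappa^{(-1)}\,\beta^{(-1)}.
\end{align*}
The first equation immediately forces $\alpha \neq 0$ and $\beta \neq 0$, so $u := \alpha/\beta$ lies in $\ok(t)^{\times}$. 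Eliminating $a$ between the two equations then produces the Riccati-type equation
\[
u \cdot u^{(-1)} - \kappa^{(-1)}\, u \;=\; t - \theta.
\]

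To finish, I would show this equation has no solution $u \in \ok(t)^{\times}$ by a short degree count. Let $d := \deg_{t} u \in \ZZ$ denote the degree of $u$ as a rational function in $t$ (in reduced form). Since the $(-1)$-twist preserves $\deg_{t}$, one has $\deg(u \cdot u^{(-1)}) = 2d$, $\deg(\kappa^{(-1)} u) = d$, and $\deg(t-\theta) = 1$. A three-case analysis on the sign of $d$ concludes the argument: if $d > 0$, the degree-$2d$ summand dominates the left-hand side, forcing $2d = 1$, which is impossible; if $d < 0$, the degree-$d$ summand dominates, forcing $d = 1$, contradicting $d < 0$; and if $d = 0$, both summands on the left have degree at most $0$, so the left-hand side has degree $\leq 0$, which cannot equal $1$. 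In every case we reach a contradiction, so no such $N$ exists and $M$ is simple.

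The main technical obstacle is keeping the $(-1)$-twist bookkeeping straight while deriving the Riccati equation from the $\sigma$-action; the degree count itself is elementary once the equation is in hand.
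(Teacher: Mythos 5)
Your proof is correct and takes essentially the same approach as the paper's. The paper normalizes the spanning vector to $fm_1 + m_2$ and runs the degree count on the two resulting equations in $f$ and the eigenvalue $\beta$, whereas you eliminate the eigenvalue to obtain the single Riccati-type equation $uu^{(-1)} - \kappa^{(-1)}u = t-\theta$ for $u = \alpha/\beta$ (which is exactly the paper's $f$); the underlying degree-in-$t$ bookkeeping is identical in both versions.
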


\begin{proof}
Let $N$ be a non-zero proper left
$\ok(t)[\sigma,\sigma^{-1}]$-submodule of $M$.  Since $N$ is
invariant under the $\sigma$-action, it is spanned over $\ok(t)$ by
$fm_{1}+m_{2}$ for some $f\in \ok(t)^{\times}$.  Because
\[
\sigma(fm_{1}+m_{2}) = \beta (f m_{1}+m_{2})
\]
for some $\beta \in\ok(t)^{\times}$, we have the equalities in
$\ok(t)$,
\[
f^{(-1)} - \kappa^{(-1)}=\beta, \quad (t-\theta)=\beta f.
\]
If $\deg_t(f) > 0$, the first equality implies that $\deg_{t} f =
\deg_{t} \beta$, and therefore by the second equality we have $1 =
2\cdot\deg_{t} f$, which is a contradiction.  If $\deg_t(f) \leq 0$,
then $\deg_t(\beta)\leq 0$, which also contradicts the second
equality.
\end{proof}

We now describe the tautological representation of the Galois group
$\Gamma_M$.  Since $\cT_{M}$ is a neutral Tannakian category over
$\FF_q(t)$, we have a canonically defined faithful representation
\[
\varphi: \Gamma_{M}\hookrightarrow \GL(M^{B}).
\]
Throughout this paper, we always identify $\Gamma_{\Psi}$ with
$\Gamma_{M}$ (cf.\ Theorem \ref{T:GalThy}). The entries of
$\Psi^{-1}\bm$ form a canonical $\FF_q(t)$-basis of $M^B$ (cf.\
\cite[Prop.~3.3.8(b)]{Papanikolas}), and so by
\cite[Thm.~4.5.3]{Papanikolas}, the representation $\varphi$ can be
described as follows: for any $\FF_q(t)$-algebra~$R$,
\begin{equation}\label{E:GaloisRep}
\begin{aligned}
  \varphi: \Gamma_{\Psi}(R) & \hookrightarrow \GL(R \otimes_{\FF_q(t)} M^{B} ) \\
   \gamma  & \mapsto  \bigl((1 \otimes\Psi^{-1}\bm) \mapsto
   (\gamma^{-1} \otimes 1)(1 \otimes \Psi^{-1} \bm)\bigr),
\end{aligned}
\end{equation}
The representation $\varphi$ is called the tautological
representation of $\Gamma_M$.

\begin{proposition}\label{P:SurjDet}
Let $\rho$ and $(M,\bm,\Phi)$ be defined as above.  Then the
determinant map $\det : \Gamma_{M} \to \GG_m$ is surjective.
\end{proposition}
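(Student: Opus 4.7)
The plan is to identify the determinant character $\det : \Gamma_M \to \GG_m$ with the Galois group of the exterior square $\wedge^2 M$, and then to show that this $1$-dimensional Galois group is all of $\GG_m$ by a direct transcendence calculation involving the Carlitz period $\tpi$.

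First, I would observe that $\wedge^2 M$ is a $1$-dimensional $t$-motive whose multiplication by $\sigma$ is represented by $\det \Phi = -(t-\theta)$, with rigid analytic trivialization $\det \Psi_\rho \in \TT^{\times}$. By the Tannakian correspondence underlying Theorem~\ref{T:GalThy} (see \cite[\S 4.5]{Papanikolas}), the character $\det : \Gamma_M \to \GG_m$ factors as $\Gamma_M \twoheadrightarrow \Gamma_{\wedge^2 M} \hookrightarrow \GG_m$, so surjectivity of $\det$ is equivalent to $\Gamma_{\wedge^2 M} = \GG_m$.

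Second, I would apply Theorem~\ref{T:TrdegAndDim} to $\wedge^2 M$, reducing the problem to showing that $\det\Psi_\rho(\theta)$ is transcendental over $\ok$. Combining the explicit formula \eqref{E:PsiRhoTheta} for $\Psi_\rho(\theta)$ with Anderson's Legendre relation \eqref{E:Legendre}, a short determinant computation yields
\[
\det\Psi_\rho(\theta) = \frac{\xi^{2}}{\tpi^{2}}\bigl(\omega_{2}F_{\tau}(\omega_{1})-\omega_{1}F_{\tau}(\omega_{2})\bigr) = -\frac{\xi}{\tpi}.
\]

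Since $\xi \in \overline{\FF_q}^{\times} \subseteq \ok$ and $\tpi$ is transcendental over $k$ by Yu's theorem \cite{Yu86} applied to the Carlitz module, the value $-\xi/\tpi$ is transcendental over $\ok$. Hence $\dim \Gamma_{\wedge^2 M} = 1$, and since $\Gamma_{\wedge^2 M}$ is a smooth, geometrically connected closed subgroup of $\GG_m$ by Theorem~\ref{T:GalThy}(b), it must equal $\GG_m$, as desired.

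No step here is particularly deep; the only genuine input beyond formal Tannakian bookkeeping is the known transcendence of $\tpi$, which the paper has already invoked. The only mild obstacle is correctly extracting the determinant from \eqref{E:PsiRhoTheta}, but this is immediate from the Legendre relation once that formula is in hand.
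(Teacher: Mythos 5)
Your approach is the same in spirit as the paper's: both reduce to computing $\Gamma_{\wedge^2 M}$ via the sub-$t$-motive $N = \wedge^2 M$ of $M^{\otimes 2}$, and both identify the resulting surjection $\Gamma_M \twoheadrightarrow \Gamma_N \hookrightarrow \GG_m$ with the determinant character via the fiber functor. Your determinant computation
\[
\det\Psi_\rho(\theta) = \frac{\xi^2}{\tpi^2}\bigl(\omega_2 F_\tau(\omega_1) - \omega_1 F_\tau(\omega_2)\bigr) = -\frac{\xi}{\tpi}
\]
is correct, and so is the invocation of Theorem~\ref{T:TrdegAndDim}.

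The one genuine difference is how you show $\dim\Gamma_{\wedge^2 M} = 1$. You specialize at $t = \theta$, use the Legendre relation to evaluate $\det\Psi_\rho(\theta) = -\xi/\tpi$, and then appeal to Yu's theorem that $\tpi$ is transcendental over $\ok$. The paper instead works with the function $\det\Psi$ directly: it checks that $\det\Psi / (\xi\Omega) \in \FF_q(t)^\times$ is a $\sigma$-fixed element, and then concludes transcendence over $\ok(t)$ because $\Omega$ has infinitely many zeros, so an algebraic function equal to a nonzero rational multiple of $\Omega$ is impossible. The paper's route is self-contained within the Frobenius difference equation framework and does not need Yu's external transcendence result or the Legendre relation at this point; your route is a quicker "specialize and cite" argument. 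Both are valid, but the paper's argument buys a cleaner logical structure (Yu's theorem never enters), while yours is shorter and leans on already-available facts.

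A smaller point: you assert the factorization $\det = (\Gamma_M \twoheadrightarrow \Gamma_{\wedge^2 M} \hookrightarrow \GG_m)$ as "Tannakian bookkeeping," whereas the paper carries out this verification in detail by tracing the action on the basis $(\det\Psi)^{-1}\bn$ of $N^B$. You are right that this follows formally from the compatibility of the fiber functor with $\wedge^2$, but since the identification of the abstract surjection with the concrete character $\det$ is exactly the content that makes the proposition non-trivial, you should not entirely suppress that step; the paper devotes the last paragraph of its proof to it precisely because it is the crux.
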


\begin{proof}
We consider the tensor product of $M^{\otimes 2} := M
\otimes_{\ok(t)} M$, on which $\sigma$ acts diagonally. Note that
$\bm \otimes \bm$ defines a canonical $\ok(t)$-basis of $M^{\otimes
2}$ and that the Kronecker product matrix $\Phi \otimes \Phi$
represents multiplication by $\sigma$ on $M^{\otimes 2}$ with
respect to $\bm\otimes \bm$.  Furthermore, $\Psi\otimes\Psi$
provides a rigid analytic trivialization of $\Phi \otimes \Phi$.

Let $N:=\bigwedge^{2}M$ be the space $\ok(t)\cdot\bn$, where $\bn :=
m_1 \otimes m_2 - m_2 \otimes m_1$.  By direct computation, $\sigma
\bn=\det\Phi \cdot \bn$. Hence $N$ is a sub-$t$-motive of
$M^{\otimes 2}$, since
\[
(\det\Psi)^{(-1)}=(\det\Phi)(\det\Psi).
\]
One checks that $\frac{\det\Psi}{\xi \Omega}$ is fixed by $\sigma$,
and hence $\frac{\det\Psi}{\xi \Omega} \in \FF_q(t)^{\times}$. Since
$\Omega$ has infinitely many zeros, $\det\Psi$ is transcendental
over $\ok(t)$.  Thus, by Theorem~\ref{T:GalThy} we see that the
algebraic group $\Gamma_N$ is isomorphic to $\GG_m$. Since $N$ is a
sub-$t$-motive of $M^{\otimes 2}$, we have a surjection
$\Gamma_{M^{\otimes 2}} \twoheadrightarrow \Gamma_N \cong \GG_m$. As
$M^{\otimes 2}$ is an object in the Tannakian category $\cT_{M}$, we
also have a surjective map $\Gamma_{M} \twoheadrightarrow
\Gamma_{M^{\otimes 2}}.$ Composing these two surjective maps, we
have a surjection
\[
\Gamma_{M}\twoheadrightarrow \GG_{m}.
\]
Let $\cT_{M^{\otimes 2}}$ and $\cT_N$ be the Tannakian subcategories
of $\cT_M$ generated by $M^{\otimes 2}$ and $N$ respectively.  Note
that the fiber functor of $\cT_{M^{\otimes 2}}$ is the restriction
of the fiber functor of $\cT_{M}$ to $\cT_{M^{\otimes 2}}$, and the
fiber functor of $\cT_{N}$ is the restriction of the fiber functor
of $\cT_{M^{\otimes 2}}$ to $\cT_{N}$.  Using this property and
\eqref{E:GaloisRep}, we see that, for any $\FF_q(t)$-algebra $R$,
the restriction of the action of any $\gamma \in \Gamma_{M}(R)$ to
the $R$-basis $1\otimes (\det\Psi)^{-1}\bn$ of $R \otimes_{\FF_q(t)}
N^B$ is equal to the action of $\det\gamma$.  Hence, the composition
map $\Gamma_{M} \twoheadrightarrow \GG_m$ is equal to the
determinant map.
\end{proof}

\begin{corollary}\label{C:Solvable}
Let $\rho$ and $(M,\bm,\Phi)$ be as above. If $\dim \Gamma_M \leq
3$, then $\Gamma_{M}$ is solvable.
\end{corollary}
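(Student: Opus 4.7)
The approach is to combine the tautological embedding $\Gamma_M \hookrightarrow \GL(M^B) \cong \GL_2$ with the surjection $\det : \Gamma_M \twoheadrightarrow \GG_m$ of Proposition~\ref{P:SurjDet}, and to deduce solvability by analyzing the derived subgroup $[\Gamma_M, \Gamma_M]$.

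First I would record that $\Gamma_M$ is connected: by Theorem~\ref{T:GalThy}(b) it is absolutely irreducible and smooth over $\oFqt$, hence geometrically integral, and in particular (geometrically) connected. Consequently the commutator subgroup $[\Gamma_M, \Gamma_M]$ is again a connected closed algebraic subgroup of $\GL_2$. Because $\det$ is a homomorphism to the abelian group $\GG_m$, it kills all commutators, so
\[
[\Gamma_M, \Gamma_M] \;\subseteq\; \ker(\det) \;=\; \Gamma_M \cap \mathrm{SL}_2.
\]
The surjectivity of $\det$ from Proposition~\ref{P:SurjDet} forces $\dim(\Gamma_M \cap \mathrm{SL}_2) = \dim \Gamma_M - 1 \leq 2$, and in particular $\dim [\Gamma_M, \Gamma_M] \leq 2$.

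Next I would invoke the classification of connected closed subgroups of $\mathrm{SL}_2$ over an algebraically closed field: up to conjugation these are the trivial group, $\GG_a$, $\GG_m$, a Borel subgroup, and $\mathrm{SL}_2$ itself, of dimensions $0,1,1,2,3$ respectively. In particular every connected closed subgroup of $\mathrm{SL}_2$ of dimension at most $2$ is contained in a Borel and hence is solvable. Applying this to $[\Gamma_M, \Gamma_M] \subseteq \mathrm{SL}_2$, we conclude that the derived subgroup of $\Gamma_M$ is solvable, and therefore $\Gamma_M$ itself is solvable.

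I do not anticipate a serious obstacle here; the only minor point to verify is that connectedness of $\Gamma_M$, formation of the commutator subgroup, and the $\mathrm{SL}_2$ subgroup classification are all insensitive to base change from $\FF_q(t)$ to $\oFqt$, which is standard. The essential content of the argument is that the surjection onto $\GG_m$ supplied by Proposition~\ref{P:SurjDet} is precisely what rules out the only non-solvable possibility for $\Gamma_M$ in dimension $\leq 3$, namely $\mathrm{SL}_2$ itself (on which $\det$ is trivial).
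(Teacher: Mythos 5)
Your proof is correct and takes essentially the same approach as the paper: both rest on the surjectivity of $\det$ from Proposition~\ref{P:SurjDet} to force $\dim \ker(\det) \leq 2$, together with the standard fact that a connected linear algebraic group of dimension $\leq 2$ is solvable. The paper organizes this via the identity component $K^0$ of $\ker(\det)$ (with $K^0$ solvable and $\Gamma_M/K^0$ a one-dimensional connected, hence abelian, group), while you run it through the connected derived subgroup $[\Gamma_M,\Gamma_M]\subseteq\ker(\det)$ and the classification of connected subgroups of $\mathrm{SL}_2$; these are two packagings of the same argument, and your detour through the $\mathrm{SL}_2$ classification can be replaced by the same general low-dimension solvability fact the paper cites.
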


\begin{proof}
By Proposition \ref{P:SurjDet}, we consider the following short
exact sequence of linear algebraic groups
\[
1 \to K \to \Gamma_M \stackrel{\det}{\twoheadrightarrow} \GG_m \to
1.
\]
Let $K^0$ be the identity component of $K$.  Since $K$ is normal in
$\Gamma_M$, for any $g \in \Gamma_M$ we have $g^{-1}K g=K$ and hence
$g^{-1}K^0 g= K^0$.  We note that $K^0$ is solvable since $\dim
K^{0} \leq 2$ (see \cite[p.~137]{Humphreys}), and $\Gamma_M/K^{0}$
is abelian since it is a one-dimensional connected algebraic group
(see \cite[p.~126]{Humphreys}). It follows that $\Gamma_M$ is
solvable.
\end{proof}

\subsection{An analogue of the motivic version of Tate's conjecture}
We let $\End(M)$ denote the endomorphisms of $M$ as a left
$\ok(t)[\sigma,\sigma^{-1}]$-module.  Given $f \in \End(M)$, we have
$f(\bm) = F\bm$ for some $F\in \Mat_2(\ok(t))$.  Since
$f\sigma=\sigma f$, we have
\[
\Phi F=F^{(-1)}\Phi.
\]
{From} this equation we see that the matrix $\Psi^{-1}F\Psi\in
\Mat_2(\LL)$ is fixed by $\sigma$, and hence $\Psi^{-1}F\Psi\in
\Mat_2(\FF_q(t))$. Thus, we have the following injective map:
\begin{align*}
 \End(M)  & \hookrightarrow  \End(M^{B})=\Mat_2(\FF_q(t)), \\
  f & \mapsto  f^{B} := \Psi^{-1}F\Psi.
\end{align*}
Since the representation $\varphi: \Gamma_{\Psi} \to \GL(M^B)$ is
functorial in $M$ (cf.\ \cite[Thm.~4.5.3]{Papanikolas}), it follows
that for any $\gamma\in \Gamma_{\Psi}(\oFqt)$ we have the
commutative diagram
\[
\xymatrix{
 \oFqt\otimes_{\FF_q(t)}M^{B} \ar[r]^{\varphi(\gamma)}
 \ar@{->}[d]^{1\otimes f^{B}}
 & \oFqt\otimes_{\FF_q(t)}M^{B} \ar@{->}[d]^{1\otimes f^{B}}\\
 \oFqt\otimes_{\FF_q(t)}M^{B}
 \ar[r]^{\varphi(\gamma)}& \oFqt\otimes_{\FF_q(t)}M^B.}
\]
Thus, we have defined an injective map
\begin{align*}
 \End(M)  & \hookrightarrow \Cent_{\Mat_2(\FF_q(t))}
 (\Gamma_{\Psi}(\oFqt) ), \\
  f & \mapsto  f^B := \Psi^{-1}F\Psi.
\end{align*}

\begin{theorem} \label{T:TateAnalogue}
The natural map
\[
f \mapsto f^B : \End(M) \to \Cent_{\Mat_2(\FF_q(t))}
(\Gamma_{\Psi}(\oFqt)),
\]
as defined above, is an isomorphism.
\end{theorem}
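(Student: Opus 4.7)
Injectivity of $f \mapsto f^B$ was already observed before the statement (if $\Psi^{-1}F\Psi = 0$ then $F = 0$, and hence $f = 0$), so the task is surjectivity. My plan is to deduce this from the Tannakian formalism underlying $\cT_M$, together with the structural properties of $\Gamma_\Psi$ furnished by Theorem~\ref{T:GalThy}.

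First I would invoke the tensor equivalence supplied by the fiber functor $P \mapsto P^B$ between $\cT_M$ and the category of finite-dimensional $\FF_q(t)$-representations of $\Gamma_M$. Because $\cT_M$ is a full subcategory of $\cR$ and $M \in \cR$, endomorphisms of $M$ in $\cT_M$ agree with endomorphisms as left $\ok(t)[\sigma,\sigma^{-1}]$-modules, namely $\End(M)$. The equivalence therefore gives a canonical $\FF_q(t)$-linear isomorphism
\[
\End(M) \xrightarrow{\sim} \End_{\Gamma_M}(M^B).
\]
Using the $\FF_q(t)$-basis $\Psi^{-1}\bm$ to identify $M^B$ with $\FF_q(t)^2$, the right-hand side consists of those matrices $A \in \Mat_2(\FF_q(t))$ whose action commutes with $\Gamma_M$ functorially in $\FF_q(t)$-algebras.

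Next I would identify this space with the centralizer in the statement. By Theorem~\ref{T:GalThy}(b), the group scheme $\Gamma_\Psi = \Gamma_M$ is smooth and absolutely irreducible over $\oFqt$, so $\Gamma_\Psi(\oFqt)$ is Zariski-dense in $\Gamma_\Psi \times_{\FF_q(t)} \oFqt$. Commutation with $\Gamma_M$ as a group scheme is therefore equivalent to commutation with every $\gamma \in \Gamma_\Psi(\oFqt)$, yielding
\[
\End_{\Gamma_M}(M^B) = \Cent_{\Mat_2(\FF_q(t))}(\Gamma_\Psi(\oFqt)).
\]

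Finally, I would verify that the composite isomorphism really sends $f$ to $\Psi^{-1}F\Psi$. This is a direct unwinding of definitions: the extension $f^\dag$ of $f$ to $M^\dag = \LL \otimes_{\ok(t)} M$ sends the basis $\Psi^{-1}\bm$ of $M^B$ to $\Psi^{-1}F\bm = (\Psi^{-1}F\Psi)(\Psi^{-1}\bm)$, so the matrix of $f^B$ in the basis $\Psi^{-1}\bm$ is indeed $\Psi^{-1}F\Psi$. The argument is largely formal once the Tannakian equivalence is in hand; the only genuinely algebraic input is the density statement, which is the point at which Theorem~\ref{T:GalThy}(b) does the real work.
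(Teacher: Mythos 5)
Your argument is correct, and it takes a genuinely different route from the paper. You derive the isomorphism abstractly: full faithfulness of the Tannakian equivalence $\cT_M \simeq \mathrm{Rep}_{\FF_q(t)}(\Gamma_M)$ gives $\End(M) \cong \End_{\Gamma_M}(M^B)$, and Zariski density of $\Gamma_\Psi(\oFqt)$ (from smoothness and absolute irreducibility in Theorem~\ref{T:GalThy}(b)) collapses the scheme-theoretic centralizer to the centralizer of $\oFqt$-points. The paper instead proceeds constructively: given $D$ in the centralizer it forms $F := \Psi D \Psi^{-1}$, shows the entries of $F$ lie in $\ok(t)$ by appealing to the difference-Galois correspondence of \cite[Thm.~4.4.6(b)]{Papanikolas} (elements of $\Lambda_\Psi$ fixed by the $\Gamma_\Psi(\oFqt)$-action belong to $\ok(t)$), and then verifies $F^{(-1)}\Phi = \Phi F$ directly to conclude $F \in \End(M)$. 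The two proofs lean on the same underlying fact — that $\Gamma_\Psi(\oFqt)$ is ``large enough'' — but yours packages it through the general Tannakian formalism, while the paper runs the explicit torsor/Galois-descent machinery for the difference equation $\Psi^{(-1)} = \Phi\Psi$. Your approach is slicker and highlights that the theorem is essentially formal once the Tannakian setup is accepted; the paper's approach has the virtue of exhibiting the endomorphism concretely as $\Psi D \Psi^{-1}$, which also makes the inverse map transparent.

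One small point worth flagging for precision rather than as a gap: when you say ``the matrix of $f^B$ in the basis $\Psi^{-1}\bm$ is $\Psi^{-1}F\Psi$,'' the honest matrix of the linear operator in the usual left-action-on-column-vectors convention is the transpose $(\Psi^{-1}F\Psi)^{\mathrm{tr}}$, while the matrix of $\varphi(\gamma)$ from \eqref{E:GaloisRep} is $(\gamma^{-1})^{\mathrm{tr}}$; the commutation condition is unaffected because transposition and inversion preserve centralizers. The paper glosses over the same convention, so your statement matches theirs, but it is worth keeping in mind if one tries to write the identification completely rigorously.
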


\begin{proof}
Given $D \in \Cent_{\Mat_2(\FF_q(t))} (\Gamma_{\Psi}(\oFqt))$, we
set $F := \Psi D \Psi^{-1} \in \Mat_2(\Lambda_{\Psi})$.  We claim
that $F$ is in fact in $\Mat_2(\ok(t))$. By
\cite[Thm.~4.4.6(b)]{Papanikolas}, we need only show that the
entries of $F$ are fixed by the action of every $\gamma \in
\Gamma_{\Psi}(\oFqt)$.

For any $\gamma \in \Gamma_{\Psi}(\oFqt)$, the action of $\gamma$ on
$h(\Psi)$, $h \in \overline{k(t)}[X,1/\det X]$, is given by
\[
\gamma * h(\Psi) := h(\Psi \gamma)
\]
(cf.\ \cite[\S4.4.3, \S4.4.4]{Papanikolas}).  Thus the action of
$\gamma$ on entries of $F$ is given as follows:
\[
\gamma * F_{ij} = \bigl((\Psi \gamma)D(\Psi \gamma)^{-1} \bigr)_{ij}
= (\Psi D\Psi^{-1} )_{ij} = F_{ij},
\]
since by definition $D$ commutes with $\gamma$.  Thus $F \in
\Mat_{2}(\ok(t))$, and therefore $F$ induces a $\ok(t)$-linear map
on $M$.

To show that $F \in \End(M)$, it is equivalent to show that
$F^{(-1)}\Phi = \Phi F$. The calculation,
\[
F^{(-1)}\Phi=\Phi \Psi D\Psi^{-1}\Phi^{-1}\Phi=\Phi F,
\]
then completes the proof.
\end{proof}

\subsection{Motivic Galois representations}
In odd characteristic we see in the following theorem that the
tautological representation $\varphi$ is semisimple.

\begin{theorem}\label{T:PhiSemisimplicity}
Suppose that $p$ is odd.  Let $\rho$ be a rank $2$ Drinfeld
$\FF_q[t]$-module over $\ok$ with
\[
\rho_{t}:=\theta+ \kappa \tau+\tau^{2}, \quad \kappa\in \ok.
\]
Let $M$ be the $t$-motive associated to $\rho$ (as in
\S\ref{SS:structGM}). Then the faithful representation
\[
\varphi: \Gamma_{M} \hookrightarrow  \GL \bigl(M^{B}\bigr)
\]
is semisimple.
\end{theorem}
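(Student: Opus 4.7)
The plan is to deduce semisimplicity of $\varphi$ from the simplicity of $M$ established in Lemma~\ref{L:simplicityofM}, via the Tannakian formalism. In fact, I will prove the stronger statement that $\varphi$ is irreducible; since any irreducible representation is tautologically semisimple, this will yield the theorem.

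First I would invoke the Tannakian equivalence between $\cT_M$ and the category of finite-dimensional $\FF_q(t)$-representations of $\Gamma_M$, implemented by the fiber functor $P \mapsto P^B$. Under this equivalence, $\Gamma_M$-stable $\FF_q(t)$-subspaces of $M^B$ correspond bijectively to sub-objects of $M$ in $\cT_M$. Because $\cT_M$ is a strictly full subcategory of the category $\cP$ of pre-$t$-motives, any such sub-object is in particular a left $\ok(t)[\sigma,\sigma^{-1}]$-submodule of $M$; the required rigid analytic triviality for membership in $\cT_M$ is automatic, obtained by passing to a basis of $M$ adapted to the submodule and restricting the trivialization $\Psi$ accordingly.

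By Lemma~\ref{L:simplicityofM}, the only such submodules of $M$ are $0$ and $M$. Combined with the previous step, the only $\Gamma_M$-stable subspaces of $M^B$ are $0$ and $M^B$, so the representation $\varphi$ is irreducible over $\FF_q(t)$, and hence semisimple. The main subtle point to check carefully is that the Tannakian equivalence really does identify categorical sub-objects with the concrete notion of sub-$\ok(t)[\sigma,\sigma^{-1}]$-module; this is a formal consequence of the exactness and faithfulness of the fiber functor $P \mapsto P^B$ together with the strict fullness of $\cT_M$ inside $\cP$. Although the hypothesis ``$p$ odd'' appears in the statement, it does not seem to enter this particular chain of reasoning; it is carried as a standing assumption throughout Section~\ref{S:PerQPer}, and becomes indispensable in the subsequent upgrade from irreducibility over $\FF_q(t)$ to absolute irreducibility and in the computation of $\Gamma_M$, where the pathologies flagged in Remark~\ref{R:char2} must be avoided.
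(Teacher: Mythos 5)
Your argument, as stated, proves that $\varphi$ is irreducible as a representation over $\FF_q(t)$: the Tannakian correspondence plus Lemma~\ref{L:simplicityofM} does show that $M^B$ has no nontrivial $\Gamma_M$-stable $\FF_q(t)$-subspace. The problem is that this is strictly weaker than what the theorem asserts and what is later used. The ``semisimplicity'' of Theorem~\ref{T:PhiSemisimplicity} is \emph{absolute} semisimplicity --- semisimplicity of $\oFqt \otimes_{\FF_q(t)} M^B$ as a $\Gamma_{\Psi}(\oFqt)$-module --- and that is how it is invoked in Lemma~\ref{L:PhiAbsIrr} (``Suppose $\varphi$ is not absolutely irreducible. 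Then by Theorem~\ref{T:PhiSemisimplicity}, we have an embedding by conjugation over $\oFqt$ into the diagonal torus''). Irreducibility over the ground field does not imply semisimplicity after passage to the algebraic closure in positive characteristic: by Schur's lemma $\End(M)$ is a division algebra over $\FF_q(t)$, and the base change stays semisimple exactly when this algebra is separable. For a two-dimensional $M$ with CM, $\End(M)$ is a quadratic extension, and when $p=2$ it can be inseparable, in which case $\oFqt \otimes M^B$ acquires a unique invariant line with no invariant complement.

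This is precisely what Remark~\ref{R:char2} records. Note that Lemma~\ref{L:simplicityofM} carries no hypothesis on $p$ --- its proof is a pure degree count --- so in the $\FF_2[t]$ example of that remark $M$ is still simple as a $\ok(t)[\sigma,\sigma^{-1}]$-module, yet $\varphi$ is not semisimple. Your proposal would therefore ``prove'' a false statement in characteristic $2$, which pinpoints the gap. Your closing remark that $p$ odd ``does not seem to enter this particular chain of reasoning'' is exactly the warning sign: the hypothesis is indispensable here, not merely a standing assumption for later sections. The paper's proof makes this explicit: after reducing (via the simplicity of $M$, as you do) to an invariant line of the form $[e,1]\Psi^{-1}\bm$ with $e \notin \FF_q(t)$, it derives the quadratic relation $ye^{2}+(w-x)e-z=0$ from the existence of a $\gamma = \bigl[\begin{smallmatrix} x & y \\ z & w \end{smallmatrix}\bigr]^{-1} \in \Gamma_{\Psi}(\FF)$ with $y \neq 0$, and then uses $p$ odd to conclude $e$ is \emph{separable} over $\FF_q(t)$. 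Only then does a nontrivial Galois conjugate $\eta(e)\neq e$ furnish the complementary invariant line and hence the decomposition. That separability step is the heart of the theorem, and it is absent from your argument.
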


\begin{remark} \label{R:char2}
This theorem is false when $p=2$.  Notably, for the Drinfeld
$\FF_2[t]$-module $\rho$ defined by $\rho_t = \theta +
(\sqrt{\theta} + \theta)\tau + \tau^2$, the above representation of
$\Gamma_{M_\rho}$ is not semisimple.
\end{remark}

To prove Theorem \ref{T:PhiSemisimplicity}, we need the following
reduction Lemma.

\begin{lemma}\label{L:SemisimplicityRed}
Let $\FF$ be the separable closure of $\FF_q(t)$ in $\oFqt$.  Then
the group representation $\varphi_{\FF}: \Gamma_{M}(\FF)
\hookrightarrow \GL ( \FF\otimes_{\FF_q(t)} M^B)$ is semisimple.
\end{lemma}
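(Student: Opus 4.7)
The plan is to reduce the semisimplicity of $\varphi_{\FF}$ to the simplicity of $M$ (established in Lemma \ref{L:simplicityofM}) by combining Tannakian formalism with a standard Galois descent argument.

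First I would recast Lemma \ref{L:simplicityofM} Tannakianly. Since $\cT_{M}\subseteq \mathcal{R}$ is a Tannakian subcategory, sub-objects of $M$ in $\cT_{M}$ are sub-pre-$t$-motives of $M$, i.e., $\ok(t)[\sigma,\sigma^{-1}]$-submodules; by Lemma \ref{L:simplicityofM} there are no proper non-zero ones, so $M$ is a simple object in $\cT_{M}$. Applying the Tannakian equivalence $\cT_{M}\simeq \mathrm{Rep}_{\FF_q(t)}(\Gamma_{M})$ induced by the fiber functor $P\mapsto P^{B}$, this translates into the algebraic irreducibility of $M^{B}$ as a representation of $\Gamma_{M}$ over $\FF_q(t)$.

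Next I would pass to $\FF$ by Galois descent. Suppose, for contradiction, that $\FF\otimes_{\FF_q(t)}M^{B}$ were not semisimple as an algebraic representation of $\Gamma_{M}\times_{\FF_q(t)}\FF$; let $S$ be its socle, the sum of all simple sub-representations. Then $S$ would be a proper non-zero $\Gamma_{M}\times \FF$-stable subspace. Because the socle is characterized intrinsically, it is automatically stable under $\Gal(\FF/\FF_q(t))$, and Galois descent yields $S=S_{0}\otimes_{\FF_q(t)}\FF$ for some proper non-zero $\Gamma_{M}$-stable $\FF_q(t)$-subspace $S_{0}\subseteq M^{B}$, contradicting the irreducibility obtained in the previous paragraph. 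Finally, semisimplicity as an algebraic $\Gamma_{M}\times \FF$-representation coincides with semisimplicity as a representation of the abstract group $\Gamma_{M}(\FF)$, since $\Gamma_{M}(\FF)$ is Zariski dense in the smooth group scheme $\Gamma_{M}\times \FF$ (cf.\ Theorem \ref{T:GalThy}(b)) and $\FF$ is separably closed, so the two invariant-subspace lattices agree.

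The substantive input is already packaged in Lemma \ref{L:simplicityofM}; the rest is purely formal Tannakian duality plus a standard descent, so I do not anticipate a serious obstacle here. In particular, the argument should go through in every characteristic, which is consistent with the fact that the odd-$p$ hypothesis needed for Theorem \ref{T:PhiSemisimplicity} must instead be invoked at the next step, which promotes $\FF$-semisimplicity to $\oFqt$-semisimplicity across the purely inseparable extension $\oFqt/\FF$, where the smoothness-based Zariski-density argument breaks down and a more delicate analysis is required.
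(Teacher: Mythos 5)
Your proposal is correct, and the underlying mechanism is the same as the paper's: simplicity of $M$ (Lemma~\ref{L:simplicityofM}) gives irreducibility of $M^B$ as a $\Gamma_M$-representation over $\FF_q(t)$, and the action of $\Gal(\FF/\FF_q(t))$ then forces semisimplicity after base change to $\FF$. The execution, however, is genuinely different. The paper argues concretely in the rank-$2$ setting: it posits a one-dimensional invariant $\FF$-subspace spanned by $[u,v]\Psi^{-1}\bm$, rules out $u=0$, $v=0$, and $u/v \in \FF_q(t)$ (Steps~1 and~2, each via simplicity of $M$ together with \cite[Prop.~4.5.8]{Papanikolas}), and then produces a second, linearly independent eigenvector by applying a Galois automorphism to the ratio $e = u/v$ (Step~3). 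You instead observe that the socle of $\FF\otimes_{\FF_q(t)}M^B$ is intrinsically characterized, hence stable under the semilinear $\Gal(\FF/\FF_q(t))$-action, and therefore descends to a $\Gamma_M$-stable $\FF_q(t)$-subspace; irreducibility then forces the socle to be everything. This is cleaner, dimension-independent (so it would generalize immediately to higher-rank Drinfeld modules), and more conceptual, at the cost of invoking Galois descent and the smoothness of $\Gamma_M$ (Theorem~\ref{T:GalThy}(b)) where the paper simply does matrix computations. Your closing remark is also on target: the paper's proof of this lemma likewise makes no use of $p$ being odd, and that hypothesis enters only when promoting $\FF$-semisimplicity to $\oFqt$-semisimplicity in the proof of Theorem~\ref{T:PhiSemisimplicity}.
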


\begin{proof}
Let $(\bm,\Phi,\Psi)$ be defined as in \S\ref{SS:structGM}.  Suppose
that there exists a one-dimensional $\FF$-vector space $V \subseteq
\FF \otimes_{\FF_q(t)} M^B$, which is invariant under
$\Gamma_{\Psi}(\FF )$.  Let $[u,v]\Psi^{-1}\bm$, for $u$, $v\in \FF
$, be an $\FF$-basis of $V$.

\textbf{Step 1:} We claim that $u\neq 0$ and $v\neq 0$.  On the
contrary, suppose that $u=0$ or $v=0$. Without loss of generality,
we may assume that $u=1$, $v=0$. Let $\mathbb{B}$ be the Borel group
consisting of all lower triangular matrices in $\GL_2$.  Since $V$
is invariant under $\Gamma_{\Psi}(\FF)$, we see that
$\Gamma_{\Psi}(\FF)\subseteq \mathbb{B}(\FF)$. Moreover, since
$\mathbb{B}$ is a closed subgroup of $\GL_{2}$ and
$\Gamma_{\Psi}(\FF)$ is dense in $\Gamma_{\Psi}$ (see
\cite[Lem.~11.2.5]{T.A.Springer}), we see that
\[
\Gamma_{\Psi}\subseteq \mathbb{B}.
\]
Thus $[1,0]\Psi^{-1}\bm\in M^B$ generates a sub-representation of
$\varphi$. This sub-representation corresponds to a nontrivial
proper sub-$t$-motive of $M$ by \cite[Prop.~4.5.8]{Papanikolas},
which contradicts Lemma~\ref{L:simplicityofM}.  Thus we may assume
that $V$ is spanned by
\[
[e,1]\Psi^{-1}\bm, \quad e\in \FF^{\times}.
\]

\textbf{Step 2:} We claim that $e\in \FF^{\times}\setminus
\FF_q(t)^{\times}$.  If $e\in \FF_q(t)^{\times}$, then having $[e,1]
\Psi^{-1} \bm \in M^{B}$ implies that there is a conjugation
embedding over $\FF_q(t)$,
\[
\Gamma_{\Psi}(\FF) \hookrightarrow \mathbb{B}(\FF).
\]
Taking the Zariski closure of $\Gamma_{\Psi}(\FF)$ inside $\GL_2$,
we see that $\Gamma_{\Psi}$ is embedded via conjugation into
$\mathbb{B}$ over $\FF_q(t)$. This provides a sub-representation of
$\varphi$. Using the argument in Step~1, we obtain a contradiction.
Thus $e\in \FF^{\times}\setminus \FF_q(t)^{\times}$.

Now since $e\in  \FF^{\times}\setminus \FF_q(t)^{\times} $, we can
choose an automorphism $\eta$ of the field $\FF$ over $\FF_q(t)$ so
that
\[
\eta(e) \neq  e.
\]

\textbf{Step 3:} We claim that $[\eta(e),1]\Psi^{-1}\bm$ spans an
$\FF$-invariant subspace of $\Gamma_{\Psi}(\FF)$.  Since
$[e,1]\Psi^{-1}\bm$ spans an invariant subspace of
$\Gamma_{\Psi}(\FF)$, for any $\gamma\in \Gamma_{\Psi}(\FF)$ we have
\[
\varphi(\gamma) [e,1] \Psi^{-1}\bm := [e,1] \gamma^{-1}\Psi^{-1}\bm
= \beta_{\gamma}[e,1]\Psi^{-1}\bm,
\]
for some $\beta_{\gamma}\in \FF^{\times}$. Thus we have
\[
[e,1]\gamma^{-1}=\beta_{\gamma}[e,1], \quad \textnormal{for all
$\gamma \in \Gamma_{\Psi}(\FF)$.}
\]
Since $\Gamma_{\Psi}$ is defined over $\FF_q(t)$, the action of
$\eta$ on both sides of the above equation implies that
$[\eta(e),1]\Psi^{-1}\bm$ is a common eigenvector for all $\gamma\in
\Gamma_{\Psi}(\FF)$, which proves the claim.

Since $\eta(e)\neq e$, $[e,1]\Psi^{-1}\bm$ and
$[\eta(e),1]\Psi^{-1}\bm$ are linearly independent over $\FF$, and
hence the group representation $\varphi_{\FF}$ is semisimple.
\end{proof}

\begin{proof}[Proof of Theorem \ref{T:PhiSemisimplicity}] Suppose there exist
$u$, $v\in \oFqt$ so that $[u,v]\Psi^{-1}\bm$ spans a
one-dimensional $\oFqt$-vector space $V$ that is invariant under
$\Gamma_{\Psi}(\oFqt)$. Using the argument in Step~1 of the proof of
the lemma, we see that $uv\neq 0$ and hence $V$ is spanned by
$[e,1]\Psi^{-1}\bm$, $e := u/v$.  Using the argument in Step~2 of
the lemma, we see that
\[
e\in \oFqt^{\times}\setminus \FF_q(t)^{\times}.
\]
We claim that the separable degree of $e$ over $\FF_q(t)$ is
strictly larger than $1$.  Then the proof of Theorem
\ref{T:PhiSemisimplicity} will be completed by using the argument in
Step 3 of the lemma.

Since $[e,1]\Psi^{-1}\bm$ is a common eigenvector for
$\Gamma_{\Psi}(\oFqt)$, we find in particular that for any
\[
\gamma=\left[
\begin{matrix}
  x & y \\
  z & w
\end{matrix}
\right]^{-1}\in \Gamma_{\Psi}(\FF),
\]
we have $[e,1]\gamma^{-1}\Psi^{-1}\bm =\beta_{\gamma}[e,1]\Psi^{-1}
\bm$, for some $\beta_{\gamma}\in\oFqt^{\times}$.  Thus, we have the
equality
\[
[e,1] \left[
\begin{matrix}
  x & y \\
  z & w
\end{matrix}
\right]=\beta_{\gamma}[e,1]
\]
which induces the quadratic relation
\[
ye^{2}+(w-x)e-z=0.
\]
If there exists some $\gamma=\bigl[ \begin{smallmatrix}  x & y \\ z
& w \end{smallmatrix} \bigr]^{-1}\in \Gamma_{\Psi}(\FF)$ with $y\neq
0$, then $e$ must be separable over $\FF_q(t)$ since the
characteristic $p$ is odd and $x$, $y$, $z$, $w\in \FF$.  Hence in
that case the claim above is proved.  Thus we need only consider the
other case that $\Gamma_{\Psi}(\FF)\subseteq \mathbb{B}$. But in
this case, the argument of Step~1 above gives a contradiction, and
hence the proof is completed.
\end{proof}

\subsection{Algebraic independence of periods and quasi-periods}
We continue with the notation of the previous sections, but from now
on we assume that $\rho$ does not have complex multiplication. That
is, we assume $\End(M)= \FF_q(t)$ (cf. Remark \ref{R:EndRho}).  Our
main goal of this section is to prove the following theorem.

\begin{theorem}\label{T:PerAlgInd}
Suppose that $p$ is odd.  Let $\rho$ be a rank $2$ Drinfeld
$\FF_q[t]$-module over $\ok$ with
\[
 \rho_t := \theta + \kappa \tau + \tau^2, \quad \kappa\in \ok,
\]
without complex multiplication.  Let $M$ be the $t$-motive
associated to $\rho$, as in \S\ref{SS:structGM}.  Then
\[
\Gamma_{M} = \GL_{2}.
\]
In particular, the $4$ quantities,
\[
\omega_1, \omega_2, F_{\tau}(\omega_1), F_{\tau}(\omega_2),
\]
are algebraically independent over $\ok$.
\end{theorem}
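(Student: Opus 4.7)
The plan is to prove that $\Gamma_M = \GL_2$, which by the equivalence \eqref{E:GL2trdeg4} is equivalent to the stated algebraic independence assertion. Since $\rho$ has no complex multiplication, Remark~\ref{R:EndRho} gives $\End(M) = \FF_q(t)$. The argument splits into three steps: first, establish that the tautological representation $\varphi$ is absolutely irreducible; second, deduce that $\Gamma_M$ is not solvable; third, conclude $\Gamma_M = \GL_2$ by a dimension count.

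For the first step, I would combine Theorem~\ref{T:PhiSemisimplicity} (semisimplicity of $\varphi$ over $\oFqt$, using that $p$ is odd) with the Tate-type statement Theorem~\ref{T:TateAnalogue}. Suppose for contradiction that $\varphi$ were not absolutely irreducible. By semisimplicity, $M^B \otimes_{\FF_q(t)} \oFqt$ would split as $V_1 \oplus V_2$ into two $1$-dimensional $\Gamma_\Psi(\oFqt)$-invariant subspaces. Either (a) both $V_i$ descend to $\FF_q(t)$, so the projection onto the first summand is a non-scalar idempotent in $\Cent_{\Mat_2(\FF_q(t))}(\Gamma_\Psi(\oFqt))$; or (b) the $V_i$ are swapped by the nontrivial element of some quadratic separable extension $L/\FF_q(t)$, in which case for any $a \in L \setminus \FF_q(t)$ the $L$-linear endomorphism acting as $a$ on $V_1$ and as its Galois conjugate on $V_2$ is Galois-equivariant, hence descends to an $\FF_q(t)$-linear, non-scalar element of $\Cent_{\Mat_2(\FF_q(t))}(\Gamma_\Psi(\oFqt))$. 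In either case, Theorem~\ref{T:TateAnalogue} yields a non-scalar element of $\End(M)$, contradicting $\End(M) = \FF_q(t)$.

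For the second step, I would apply the Lie-Kolchin theorem. By Theorem~\ref{T:GalThy}(b), $\Gamma_M = \Gamma_\Psi$ is smooth and absolutely irreducible, hence geometrically connected. Were it solvable, Lie-Kolchin applied over $\oFqt$ would provide a common eigenvector in $M^B \otimes \oFqt$, yielding a $1$-dimensional invariant subspace and contradicting the absolute irreducibility from step one. Thus $\Gamma_M$ is not solvable, and Corollary~\ref{C:Solvable} forces $\dim \Gamma_M \geq 4$. Since $\Gamma_M$ embeds as a closed smooth connected subgroup scheme of $\GL_2$, which itself has dimension $4$, the equality of dimensions forces $\Gamma_M = \GL_2$.

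I expect the main obstacle to be the case analysis of step one, and specifically the case in which the hypothetical decomposition of $M^B \otimes \oFqt$ is not defined over $\FF_q(t)$ but only after a quadratic extension. Producing an honest $\FF_q(t)$-rational, non-scalar element of the centralizer there requires a careful use of the Galois equivariance of the decomposition, after which Theorem~\ref{T:TateAnalogue} delivers the desired contradiction. Once absolute irreducibility is secured, the remainder of the argument is a short application of Lie-Kolchin and Corollary~\ref{C:Solvable}, together with the structural results about $\Gamma_\Psi$ from Theorem~\ref{T:GalThy}.
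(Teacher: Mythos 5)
Your proposal is correct and follows the paper's overall strategy: reduce to $\Gamma_M = \GL_2$ via~\eqref{E:GL2trdeg4}; show $\varphi$ is absolutely irreducible using Theorem~\ref{T:PhiSemisimplicity} and the Tate-analogue Theorem~\ref{T:TateAnalogue}; then deploy Corollary~\ref{C:Solvable} together with Lie--Kolchin to force $\dim\Gamma_M = 4$. The only divergence from the paper is in how absolute irreducibility (the paper's Lemma~\ref{L:PhiAbsIrr}) is derived from semisimplicity. The paper observes that a reducible semisimple $\Gamma_\Psi \subseteq \GL_2$ must be a torus over $\FF_q(t)$, hence commutative, so that $\Gamma_\Psi(\FF_q(t))$ lands in the centralizer $\mathbf{C}$; it then splits on whether $\Gamma_\Psi(\FF_q(t))$ contains a non-scalar (giving $\dim_{\FF_q(t)}\mathbf{C}\ge 2$, contradicting Theorem~\ref{T:TateAnalogue}) or consists only of scalars (in which case density of rational points plus Lemma~\ref{L:simplicityofM} gives a contradiction). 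You instead decompose $M^B\otimes\oFqt = V_1\oplus V_2$ directly and perform an explicit Galois descent to manufacture a non-scalar element of $\mathbf{C}$ in both the split and non-split cases, invoking the Tate analogue uniformly. Your descent argument in case (b) is sound: for a quadratic separable $L/\FF_q(t)$ and $a\in L\setminus\FF_q(t)$, the endomorphism acting by $a$ on $V_1$ and by the conjugate of $a$ on $V_2$ commutes with the semilinear Galois action and so descends to $\Mat_2(\FF_q(t))$; it acts by scalars on each eigenspace and so lies in $\mathbf{C}$. The two routes rely on the same inputs and have the same content; yours trades the paper's density-of-rational-points step for an explicit descent computation, which is perhaps a bit cleaner conceptually. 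One small point worth pinning down is the tacit use that the characters of $\Gamma_\Psi$ on $V_1$ and $V_2$ are distinct (so that the decomposition is canonical and Galois-stable); if they coincided, $\Gamma_\Psi$ would act by scalars and $\mathbf{C}$ would be all of $\Mat_2(\FF_q(t))$, which is covered by your case (a) but is worth noting explicitly.
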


\begin{remark} \label{R:GammaMisGL2}
The above theorem still holds for an arbitrary rank $2$ Drinfeld
module $\rho$ over $\ok$ without complex multiplication, still under
the assumption $p\neq 2$.  This can be explained as follows.  Let
$u\in \ok^{\times}$ be the coefficient of $\tau^2$ in $\rho_t$. We
pick $x\in \ok^{\times}$ so that $x^{q^2 - 1} = \frac{1}{u}$, and we
define $\nu$ to be the Drinfeld $\FF_q[t]$-module over $\ok$ given
by $\nu_t := x^{-1} \rho_t x$.  That is, $\rho$ is isomorphic to
$\nu$  over $\ok$ (see \cite{Goss,Thakur}). Note that the
coefficient of $\tau^2$ in $\nu_{t}$ is $1$. It is not hard to see
that
\begin{equation}\label{E:expnu}
\exp_{\rho} = x \circ {\exp_{\nu}} \circ x^{-1}, \quad
\Lambda_{\rho} = x \Lambda_{\nu},
\end{equation}
and hence $\nu$ also does not have complex multiplication. We let
$\mathcal{F}_{\tau}$ be the quasi-periodic function of $\nu$
associated to the biderivation defined by $t\mapsto \tau$. Using
\eqref{E:expnu}, one checks that
\begin{equation}\label{E:QPfunnu}
F_{\tau}(z)=x^{q} \mathcal{F}_{\tau}(x^{-1} z).
\end{equation}
More generally, given $\lambda_1, \ldots, \lambda_m \in
\CC_{\infty}$ such that $\exp_{\rho}(\lambda_i)\in \ok$ for $i=1,
\ldots, m$, we  note that by \eqref{E:expnu} we have
$\exp_{\nu}(x^{-1}\lambda_i)\in \ok$ for each $i$. Moreover, from
\eqref{E:QPfunnu} we see that
\[
K:= \ok \bigl(\cup_{i=1}^{m}\{ \lambda_{i}, F_{\tau}(\lambda_{i})\}
\bigr) = \ok \bigl(\cup_{i=1}^{m} \{
x^{-1}\lambda_i,\mathcal{F}_{\tau}(x^{-1} \lambda_i )\} \bigr).
\]
This property will be used in the proof of the main theorem of the
last section.
\end{remark}

The proof of Theorem~\ref{T:PerAlgInd} relies on the following lemma
that shows that, when $\rho$ does not have complex multiplication,
the representation $\varphi$ is absolutely irreducible.

\begin{lemma} \label{L:PhiAbsIrr}
Continuing with the notation of Theorem~\ref{T:PerAlgInd}, the
representation $\varphi: \Gamma_{M}\hookrightarrow \GL (M^{B})$ is
absolutely irreducible.
\end{lemma}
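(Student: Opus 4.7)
The plan is to argue by contradiction, leveraging the three previously established structural results: simplicity of $M$ (Lemma~\ref{L:simplicityofM}), semisimplicity of $\varphi$ in odd characteristic (Theorem~\ref{T:PhiSemisimplicity}), and the Tate-type identification $\End(M) \iso \Cent_{\Mat_2(\FF_q(t))}(\Gamma_\Psi(\oFqt))$ of Theorem~\ref{T:TateAnalogue}. Suppose $\varphi$ fails to be absolutely irreducible; semisimplicity then forces a decomposition
\[
M^B \otimes_{\FF_q(t)} \oFqt = V_1 \oplus V_2
\]
into $\Gamma_\Psi(\oFqt)$-stable lines, and I will work out the centralizer $C := \Cent_{\Mat_2(\oFqt)}(\Gamma_\Psi(\oFqt))$ together with its Galois-invariant subalgebra, deriving a contradiction in each possible configuration.

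First I would treat the case in which $\Gamma_\Psi$ acts on $V_1$ and on $V_2$ by the same character. Then $\Gamma_\Psi(\oFqt)$ acts on $M^B \otimes \oFqt$ by scalars, so $C = \Mat_2(\oFqt)$ and its Galois-invariant part is $\Mat_2(\FF_q(t))$; Theorem~\ref{T:TateAnalogue} therefore identifies $\End(M)$ with the four-dimensional algebra $\Mat_2(\FF_q(t))$, contradicting $\End(M) = \FF_q(t)$. Otherwise the two characters are distinct, so $\Gamma_\Psi(\oFqt)$ lies in the maximal torus determined by the decomposition and $C = \oFqt e_1 \oplus \oFqt e_2$, where $e_i$ denotes the projection onto $V_i$. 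The Galois group $\Gal(\oFqt/\FF_q(t))$ then acts on the pair $\{V_1, V_2\}$ either by fixing each $V_i$ or by swapping them.

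If Galois fixes each $V_i$, then each $e_i$ descends to $\Mat_2(\FF_q(t))$, so each $V_i$ is a $\Gamma_M$-stable line defined over $\FF_q(t)$; via the Tannakian dictionary (cf.\ \cite[Prop.~4.5.8]{Papanikolas}) this produces a proper nontrivial sub-$t$-motive of $M$, contradicting Lemma~\ref{L:simplicityofM}. If Galois instead swaps $V_1$ and $V_2$, then, as extracted in the proof of Theorem~\ref{T:PhiSemisimplicity} (where the hypothesis $p$ odd intervenes), both characters are defined over a separable quadratic extension $L/\FF_q(t)$ with nontrivial involution $\eta$, and $\eta$ exchanges $e_1$ and $e_2$. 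The map $f \mapsto f e_1 + \eta(f) e_2$ is then a ring homomorphism $L \hookrightarrow \Mat_2(L)^{\eta} = \Mat_2(\FF_q(t))$ whose image lies in the centralizer; Theorem~\ref{T:TateAnalogue} therefore embeds $L$ into $\End(M)$, once more contradicting the no-CM hypothesis.

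The only genuine subtlety I anticipate is the Galois-swap subcase: one must verify carefully that $\eta$ actually swaps the projectors, that the ring $\{f e_1 + \eta(f) e_2 : f \in L\}$ really does lie in $\Mat_2(\FF_q(t))$, and that it is abstractly isomorphic to $L$ and commutes with $\Gamma_\Psi(\oFqt)$. All of these are routine descent checks once the setup is fixed, but they are the place where the Tannakian correspondence, the centralizer computation and Galois descent all meet, so they merit careful execution. With those verifications in hand, the remainder of the argument is a formal composition of Theorems~\ref{T:PhiSemisimplicity} and \ref{T:TateAnalogue}.
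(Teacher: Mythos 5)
Your proof is correct, and it takes a genuinely different route from the paper's. The paper's argument is shorter and more coordinate-bound: after semisimplicity forces $\Gamma_\Psi$ into a (non-split) torus, it uses that a torus is abelian to get $\Gamma_\Psi(\FF_q(t)) \subseteq \Cent_{\Mat_2(\FF_q(t))}(\Gamma_\Psi(\oFqt))$, and then argues by a dichotomy on whether $\Gamma_\Psi(\FF_q(t))$ contains a non-scalar (if so, the centralizer has dimension $\ge 2$, against Theorem~\ref{T:TateAnalogue}; if not, Zariski density of rational points, via \cite[Lem.~13.2.7(ii)]{T.A.Springer}, forces $\Gamma_\Psi$ into the scalar torus, contradicting Lemma~\ref{L:simplicityofM}). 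Your proof instead analyzes the Galois orbit structure of the two weight lines $V_1, V_2$ and produces a trichotomy: equal characters give $\Cent_{\Mat_2(\FF_q(t))}(\Gamma_\Psi(\oFqt)) = \Mat_2(\FF_q(t))$; distinct characters with Galois fixing each line give a rational invariant line and hence a proper sub-$t$-motive; distinct characters with a Galois swap produce, via $f \mapsto f e_1 + \eta(f) e_2$, a separable quadratic field $L$ embedded into $\End(M)$. What this buys you: the third case makes the ``hidden CM'' obstruction completely explicit as an order inside $\End(M)$, and you avoid invoking density of $\FF_q(t)$-points for a torus (unirationality). What the paper's argument buys: it bypasses the descent bookkeeping entirely, at the cost of the density input. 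Both hinges ultimately on the same three ingredients (Lemma~\ref{L:simplicityofM}, Theorem~\ref{T:PhiSemisimplicity}, Theorem~\ref{T:TateAnalogue}). One small cosmetic point: in your Case~1, the phrase ``Galois-invariant part'' of $\Mat_2(\oFqt)$ is a slight abuse since $\oFqt/\FF_q(t)$ is not Galois; but no descent is actually needed there, since $\Cent_{\Mat_2(\FF_q(t))}(\text{scalars}) = \Mat_2(\FF_q(t))$ directly. And in Case~2 the separability of $L$ is genuinely needed for the descent $\Mat_2(L)^\eta = \Mat_2(\FF_q(t))$; you correctly locate this as the place where $p$ odd enters, via the separability step in the proof of Theorem~\ref{T:PhiSemisimplicity}.
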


\begin{proof}
Suppose that $\varphi$ is not absolutely irreducible.  Then by
Theorem~\ref{T:PhiSemisimplicity}, we have an embedding by
conjugation over $\oFqt$,
\[
\Gamma_{\Psi}\hookrightarrow \left\{\left[
\begin{matrix}
  * & 0 \\
  0 & *
\end{matrix}
\right]\in \GL_{2}  \right\},
\]
and hence $\Gamma_{\Psi}$ is a (non-split) torus over $\FF_q(t)$.
Since $\Gamma_{\Psi}$ is a torus over $\FF_q(t)$, we see that
\[
\Gamma_{\Psi}(\FF_q(t)) \subseteq \mathbf{C} :=
\Cent_{\Mat_2(\FF_q(t))}\bigl(\Gamma_{\Psi}(\oFqt) \bigr).
\]
Suppose there exists $\gamma\in \Gamma_{\Psi}(\FF_q(t))$ so that
$\gamma$ is not an $\FF_q(t)^{\times}$-scalar multiple of the
identity matrix $\Id_{2}$.  Then $\Id_2$, $\gamma\in \mathbf{C}$ are
linearly independent over $\FF_q(t)$. Hence,
$\dim_{\FF_q(t)}\mathbf{C} \geq 2$, which contradicts
Theorem~\ref{T:TateAnalogue}.

Thus, $\Gamma_{\Psi}(\FF_q(t))$ is contained in the one-dimensional
torus $\GG$ consisting of all $a \Id_{2}$, $a\in \oFqt^{\times} $.
Note that, by \cite[Lem.~13.2.7(ii)]{T.A.Springer},
$\Gamma_{\Psi}(\FF_q(t))$ is dense in $\Gamma_{\Psi}$. Taking the
Zariski closure of $\Gamma_{\Psi}(\FF_q(t))$ inside $\GL_2$, we see
that $\Gamma_{\Psi}\subseteq \GG$ and hence $M^B$ splits. The
splitting of $M^B$ implies that $M$ is not simple by
\cite[Prop.~4.5.8]{Papanikolas}, which contradicts
Lemma~\ref{L:simplicityofM}.
\end{proof}

\begin{proof}[Proof of Theorem \ref{T:PerAlgInd}]
Suppose $\Gamma_{M} \subsetneq \GL_2$.  Then $\dim \Gamma_M \leq 3$,
since $\Gamma_M$ is connected.  By Corollary~\ref{C:Solvable}, we
see that $\Gamma_M$ is solvable, which contradicts the absolute
irreducibility of the representation $\varphi$ from the lemma. Thus,
$\Gamma_{M}= \GL_{2}$.  Moreover, the algebraic independence of
$\omega_{1}$, $\omega_{2}$, $F_{\tau}(\omega_{1})$,
$F_{\tau}(\omega_{2})$ over $\ok$ follows from \eqref{E:GL2trdeg4}.
\end{proof}

\section{Algebraic independence of Drinfeld logarithms}
\label{S:Logs}

\subsection{Some linear algebraic groups} \label{S:GammanDef}
For each $n \geq 0$, we denote by $G_{[n]}$ the $(4+2n)$-dimensional
linear algebraic subgroup of $\GL_{2+n}$ over $\FF_q(t)$:
\[
G_{[n]}:=   \left\{ \left[%
\begin{matrix}
  * & * & 0 &\cdots & 0\\
  * & * & 0&\cdots& 0 \\
  * & * & 1 &\cdots& 0 \\
  \vdots  & \vdots  &  \vdots & \ddots     & \vdots  \\
   * & *  &  0 & \cdots     & 1  \\
\end{matrix}
\right] \in \GL_{2+n} \right\}.
\]
We let
\[
\bX_{n} := \begin{bmatrix}
X_{11} &X_{12} &0 &\cdots  & 0\\
X_{21} &X_{22} &0 & \cdots & 0\\
X_{1} &Y_{1} &1&\cdots  &0 \\
\vdots  & \vdots & \vdots & \ddots & \vdots \\
X_{n} &Y_{n} &0 &\cdots  &1
\end{bmatrix}
\]
be the coordinates of $G_{[n]}$. Throughout this section, we fix a
positive integer $m$ and consider a sequence of linear algebraic
groups $\{\Gamma_{[n]}\}_{0\leq n\leq m }$ over $\FF_q(t)$ with the
following properties:
\begin{itemize}
\item $\Gamma_{[0]}=\GL_{2}$;
\item for each $1\leq n\leq m$, $\Gamma_{[n]}\subseteq G_{[n]}$;
\item $\Gamma_{[n]}$ is absolutely irreducible;
\item we have a surjective morphism $\pi_{n}: \Gamma_{[n]}
\twoheadrightarrow \Gamma_{[n-1]}$, which coincides with the
projection map on the upper left $(n+1) \times (n+1)$ square of
elements in $\Gamma_{[n]}$.
\end{itemize}
In \S\ref{S:AppDrinLogs} we will specify the particular groups
$\Gamma_{[n]}$ that we have in mind, but for now we need only that
they satisfy the above properties.

\begin{definition}
For each $\Gamma_{[n]}$ as above, $\Gamma_{[n]}$ is said to have
\emph{full dimension} if $\dim \Gamma_{[n]}=4+2n$.
\end{definition}

\begin{lemma}\label{L:dimGamman}
Suppose $\{ \Gamma_{[n]}\}_{0\leq n \leq m  }$ is defined as above.
For $1\leq n \leq m$, if $\Gamma_{[n-1]}$ has full dimension, then
\[
\dim\Gamma_{[n]} = \dim\Gamma_{[n-1]} \quad\textnormal{or}\quad
\dim\Gamma_{[n]}= \dim\Gamma_{[n-1]} + 2.
\]
\end{lemma}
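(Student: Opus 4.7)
The plan is to analyze the kernel $K_n := \Ker\pi_n$ of the surjective morphism $\pi_n : \Gamma_{[n]} \twoheadrightarrow \Gamma_{[n-1]}$ and to establish that $\dim K_n \in \{0,2\}$. Combined with the standard dimension formula $\dim\Gamma_{[n]} = \dim\Gamma_{[n-1]} + \dim K_n$ for surjections of smooth algebraic groups, this yields the lemma. The morphism $\pi_n$ is the restriction to $\Gamma_{[n]}$ of the projection $G_{[n]}\twoheadrightarrow G_{[n-1]}$ onto the upper-left $(n+1)\times(n+1)$ block, whose ambient kernel inside $G_{[n]}$ is exactly the two-dimensional vector group $\GG_a^2$ parametrized by the remaining free entries $(X_n, Y_n)$ in the bottom row. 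Hence $K_n$ embeds as a closed subgroup of $\GG_a^2$, so $\dim K_n \in \{0,1,2\}$; the entire task is to rule out $\dim K_n = 1$.

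To do so I will exploit the normality of $K_n$ in $\Gamma_{[n]}$ together with an explicit description of the conjugation action. Writing a generic $\gamma \in \Gamma_{[n]} \subseteq G_{[n]}$ in block form $\bigl[\begin{smallmatrix} A & 0 \\ B & I_n \end{smallmatrix}\bigr]$ with $A$ the upper-left $2\times 2$ block, and a generic $u\in K_n$ as the matrix determined by its bottom-row pair $(x,y)$, a direct block computation gives that $\gamma u \gamma^{-1}\in K_n$ has bottom-row pair $(x,y)A^{-1}$. Thus the conjugation action of $\Gamma_{[n]}$ on $K_n \subseteq \GG_a^2$ factors through the iterated projection $\Gamma_{[n]} \to \Gamma_{[0]} = \GL_2$, $\gamma \mapsto A$, which is surjective as the composition $\pi_1\circ \cdots \circ \pi_n$ of the given surjections; and the induced $\GL_2$-action coincides with the standard right action on $\GG_a^2$. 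Normality of $K_n$ then forces $K_n$ to be a $\GL_2$-invariant closed subgroup of $\GG_a^2$ under the standard representation.

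The concluding step is to show that the only $\GL_2$-invariant smooth closed subgroups of $\GG_a^2$ are $\{0\}$ and $\GG_a^2$. Absolute irreducibility of the standard representation rules out any proper nonzero linear subgroup, and transitivity of $\GL_2(\overline{\FF_q(t)})$ on $\overline{\FF_q(t)}^2\setminus\{0\}$ rules out any nontrivial finite étale subgroup. The remaining possibilities are smooth connected $1$-dimensional subgroups of $\GG_a^2$, which in positive characteristic may be Frobenius-twisted (graphs of nontrivial additive polynomials such as $y = x^{p^k}$); a direct check against a single non-diagonal matrix such as $\bigl[\begin{smallmatrix} 0 & 1 \\ 1 & 0 \end{smallmatrix}\bigr]$ shows that no such graph is preserved by the full $\GL_2$. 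Smoothness of $K_n$ is inherited from that of $\Gamma_{[n]}$ (cf.\ Theorem~\ref{T:GalThy}(b)), so the classification forces $\dim K_n \in \{0,2\}$. The main obstacle, as I see it, is precisely this last classification step, since positive characteristic admits more closed subgroups of $\GG_a^2$ than characteristic zero; the rigidity of demanding invariance under the full $\GL_2$, however, collapses these cases via the irreducibility of the standard representation.
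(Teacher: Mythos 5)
Your proposal is correct and takes essentially the same approach as the paper: both use normality of the kernel $K_n = \Ker\pi_n \subseteq \GG_a^2$ together with the fact that conjugation by lifts of elements of $\GL_2$ gives a $\GL_2$-action on $K_n$ via the standard representation, which by irreducibility admits no invariant one-dimensional subgroup. The paper packages the argument slightly differently — it first uses conjugation by scalar matrices $\mathrm{diag}(a,a)$ to show the kernel is an honest $\overline{\FF_q(t)}$-linear subspace (disposing of your Frobenius-twisted case at the outset), then rules out the two coordinate axes via $\mathrm{diag}(a,1)$ and the swap matrix — whereas you enumerate the possible closed subgroup types and eliminate each, but the content is the same.
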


\begin{proof}
We consider the short exact sequence of linear algebraic groups
\[
1 \to V \to \Gamma_{[n]} \stackrel{\pi_{n}}{\twoheadrightarrow}
\Gamma_{[n-1]} \to 1.
\]
We note that
\[
V\subseteq   \left\{ \left[
\begin{matrix}
  1 & 0 & 0 &\cdots & 0\\
  0 & 1 & 0&\cdots& 0 \\
  0 & 0 & 1 &\cdots& 0 \\
  \vdots  & \vdots  &  \vdots & \ddots     & \vdots  \\
   * & *  &  0 & \cdots     & 1  \\
\end{matrix}
\right]\right\} \subseteq \GL_{2+n}
\]
and hence $V$ has the natural structure of an additive group.
Moreover, since $\Gamma_{[n-1]}$ has full dimension, for any $a\in
\oFqt^{\times}$ we can pick $\gamma\in \Gamma_{[n]}(\oFqt)$ so that
$\pi_n(\gamma)$ is the block diagonal matrix
\[
\pi_{n}(\gamma)= \begin{bmatrix}
  a & 0 \\
  0 & a
\end{bmatrix} \oplus \Id_{n-1}
\in \Gamma_{[n-1]}(\oFqt).
\]
Direct calculation shows that $\gamma^{-1} v \gamma\in V(\oFqt)$ for
$v\in V(\oFqt)$ and that $V$ is a vector group.

We need only consider the case that $\dim V=1$. We claim that
\[
V\subseteq   \left\{ \left[
\begin{matrix}
  1 & 0 & 0 &\cdots & 0\\
  0 & 1 & 0&\cdots& 0 \\
  0 & 0 & 1 &\cdots& 0 \\
  \vdots  & \vdots  &  \vdots & \ddots     & \vdots  \\
   * & 0  &  0 & \cdots     & 1
\end{matrix}
\right ]\right \} \subseteq \GL_{2+n} \textnormal{\ or\ } V
\subseteq   \left\{ \left[
\begin{matrix}
  1 & 0 & 0 &\cdots & 0\\
  0 & 1 & 0&\cdots& 0 \\
  0 & 0 & 1 &\cdots& 0 \\
  \vdots  & \vdots  &  \vdots & \ddots     & \vdots  \\
  0 & *  &  0 & \cdots     & 1
\end{matrix}
\right ]\right \} \subseteq \GL_{2+n}.
\]
If the claim does not hold, then there exists $v\in V(\oFqt)$ with
both the $(n+2,1)$ and $(n+2,2)$ entries non-zero. Take $a\in
\oFqt$, $a \neq 0$, $1$, and pick any $\delta\in
\Gamma_{[n]}(\oFqt)$ so that $\pi_n(\delta)$ is the block diagonal
matrix
\[
\pi_{n}(\delta)=\begin{bmatrix}
  a & 0 \\
  0 & 1
\end{bmatrix} \oplus \Id_{n-1} \in \Gamma_{[n-1]}(\oFqt).
\]
Then we see that $\delta^{-1}v\delta$ and $v$ are $\oFqt$-linearly
independent vectors in $V(\oFqt)$, which contradicts $\dim V=1$.

Now we pick $\eta\in \Gamma_{[n]}(\oFqt)$ so that $\pi_n$ is the
block diagonal matrix
\[
\pi_{n}(\eta)=\begin{bmatrix}
  0 & 1 \\
  1 & 0
\end{bmatrix} \oplus \Id_{n-1} \in \Gamma_{[n-1]}(\oFqt).
\]
Then the inclusion $\eta^{-1}V(\oFqt) \eta \subseteq V(\oFqt)$ shows
that
\[
V\nsubseteq \left\{ \left[
\begin{matrix}
  1 & 0 & 0 &\cdots & 0\\
  0 & 1 & 0&\cdots& 0 \\
  0 & 0 & 1 &\cdots& 0 \\
  \vdots  & \vdots  &  \vdots & \ddots     & \vdots  \\
   * & 0  &  0 & \cdots     & 1
\end{matrix}
\right ]\right \}  \textnormal{\ and\ } V \nsubseteq   \left\{
\left[
\begin{matrix}
  1 & 0 & 0 &\cdots & 0\\
  0 & 1 & 0&\cdots& 0 \\
  0 & 0 & 1 &\cdots& 0 \\
  \vdots  & \vdots  &  \vdots & \ddots     & \vdots  \\
  0 & *  &  0 & \cdots     & 1
\end{matrix}
\right ]\right \}.
\]
Hence, the dimension of $V$ is either $0$ or $2$. The proof is
completed from the equality $\dim \Gamma_{[n]} =
\dim\Gamma_{[n-1]}+\dim V$.
\end{proof}

Now suppose that $\Gamma_{[n-1]}$ has full dimension for some $1\leq
n \leq m$. We define the following one-dimensional subgroups of
$\Gamma_{[n-1]}\subseteq \GL_{n+1}$:
\begin{gather*}
T_{1}:=\left\{ \left[
\begin{matrix}
  * & 0 & 0 & \cdots & 0 \\
  0 & 1 & 0 & \cdots & 0 \\
  0 & 0 & 1 & \cdots & 0 \\
  \vdots & \vdots & \vdots & \ddots & 0 \\
  0 & 0 & 0 & \cdots & 1
\end{matrix}
\right]\right\},\quad T_{2}:=\left\{ \left[%
\begin{matrix}
  1 & 0 & 0 & \cdots & 0 \\
  0 & * & 0 & \cdots & 0 \\
  0 & 0 & 1 & \cdots & 0 \\
  \vdots & \vdots & \vdots & \ddots & 0 \\
  0 & 0 & 0 & \cdots & 1
\end{matrix}
\right]\right\},
\\
U_{1}:=\left\{ \left[%
\begin{matrix}
  1 & 0 & 0 & \cdots & 0 \\
  * & 1 & 0 & \cdots & 0 \\
  0 & 0 & 1 & \cdots & 0 \\
  \vdots & \vdots & \vdots & \ddots & 0 \\
  0 & 0 & 0 & \cdots & 1
\end{matrix}
\right]\right\},\quad U_{2}:=\left\{ \left[%
\begin{matrix}
  1 & * & 0 & \cdots & 0 \\
  0 & 1 & 0 & \cdots & 0 \\
  0 & 0 & 1 & \cdots & 0 \\
  \vdots & \vdots & \vdots & \ddots & 0 \\
  0 & 0 & 0 & \cdots & 1
\end{matrix}
\right]\right\},
\\
G_{1}:=\left\{ \left[%
\begin{matrix}
  1 & 0 & 0 & \cdots & 0 \\
  0 & 1 & 0 & \cdots & 0 \\
  * & 0 & 1 & \cdots & 0 \\
  \vdots & \vdots & \vdots & \ddots & 0 \\
  0 & 0 & 0 & \cdots & 1
\end{matrix}
\right]\right\},\quad H_{1}:=\left\{ \left[%
\begin{matrix}
  1 & 0 & 0 & \cdots & 0 \\
  0 & 1 & 0 & \cdots & 0 \\
  0 & * & 1 & \cdots & 0 \\
  \vdots & \vdots & \vdots & \ddots & 0 \\
  0 & 0 & 0 & \cdots & 1
\end{matrix}
\right]\right\},
\\
  \vdots
\end{gather*}
\begin{gather*}
G_{n-1}:=\left\{ \left[%
\begin{matrix}
  1 & 0 & 0 & \cdots & 0 \\
  0 & 1 & 0 & \cdots & 0 \\
  0 & 0 & 1 & \cdots & 0 \\
  \vdots & \vdots & \vdots & \ddots & 0 \\
  * & 0 & 0 & \cdots & 1 \\
\end{matrix}
\right]\right\},\quad H_{n-1}:=\left\{ \left[%
\begin{matrix}
  1 & 0 & 0 & \cdots & 0 \\
  0 & 1 & 0 & \cdots & 0 \\
  0 & 0 & 1 & \cdots & 0 \\
  \vdots & \vdots & \vdots & \ddots & 0 \\
  0 & * & 0 & \cdots & 1 \\
\end{matrix}
\right]\right\}.
\end{gather*}
If $n=1$, we define only  $T_{i}$, $U_{i}$ for $i=1$, $2$, without
$G_{j}$, $H_{j}$.

\begin{proposition} \label{P:IsomRatPts}
Suppose $\{ \Gamma_{[n]}\}_{0\leq n \leq m}$ are defined as above,
and suppose that $\Gamma_{[n-1]}$ has full dimension for some $1\leq
n\leq m$. If $\Gamma_{[n]}$ does not have full dimension, then
$\pi_{n}$ induces an isomorphism on $\FF_q(t)$-rational points
$\pi_{n}:\Gamma_{[n]}(\FF_q(t)) \iso \Gamma_{[n-1]}(\FF_q(t))$.
\end{proposition}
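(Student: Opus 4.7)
The plan is to show that $V := \ker \pi_n$ is trivial, from which the claimed isomorphism on $\FF_q(t)$-rational points follows directly.

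First, since $\Gamma_{[n]}$ does not have full dimension while $\Gamma_{[n-1]}$ does, Lemma~\ref{L:dimGamman} forces $\dim V = 0$. As in the proof of that lemma, $V$ sits naturally as a subgroup of $\GG_{a}^{2}$, the two coordinates being the $(n{+}2,1)$ and $(n{+}2,2)$ matrix entries. Moreover, since $\Gamma_{[n-1]}$ has full dimension it coincides with $G_{[n-1]}$, so $\Gamma_{[n-1]}(\oFqt)$ contains every block element $\mathrm{diag}(a,b) \oplus \Id_{n-1}$ with $(a,b) \in (\oFqt^\times)^2$.

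Second, I would exploit the conjugation action of $\Gamma_{[n]}$ on $V$. Given $(a,b) \in (\oFqt^\times)^2$, lift $\mathrm{diag}(a,b) \oplus \Id_{n-1}$ to some $\gamma \in \Gamma_{[n]}(\oFqt)$ (possible by surjectivity of $\pi_n$ on geometric points). A direct block-matrix computation, analogous to the one in Lemma~\ref{L:dimGamman}, shows that conjugation by $\gamma$ sends $v \in V(\oFqt)$ with coordinates $(x,y)$ to the element with coordinates $(ax, by)$. Hence $V(\oFqt)$ is closed under independent $\GG_m^2$-scaling, and since $\dim V = 0$ the only such subset is the identity. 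Therefore $V(\oFqt) = \{e\}$, which already yields injectivity of $\pi_n$ on $\FF_q(t)$-rational points via $V(\FF_q(t)) \subseteq V(\oFqt) = \{e\}$.

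For surjectivity, let $h \in \Gamma_{[n-1]}(\FF_q(t))$. The surjectivity of $\pi_n$ on $\oFqt$-points produces a preimage $g \in \Gamma_{[n]}(\oFqt)$, unique by $V(\oFqt) = \{e\}$. Any automorphism $\sigma$ of $\oFqt$ over $\FF_q(t)$ sends $g$ to another preimage of $\sigma(h) = h$, so uniqueness forces $\sigma(g) = g$. Combining this Galois invariance with the absolute irreducibility of $\Gamma_{[n]}$ (whence it is geometrically reduced and smooth after base change), one concludes that $g$ in fact lies in $\Gamma_{[n]}(\FF_q(t))$.

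The main obstacle is this last descent step: in positive characteristic, knowing $V(\oFqt) = \{e\}$ does not by itself force $V$ to be trivial as a group scheme (an infinitesimal kernel such as $\alpha_p^2$ is a priori consistent with the $\GG_m^2$-stability argument above), and such a kernel could produce nontrivial purely inseparable torsors over $\FF_q(t)$ that would obstruct surjectivity on rational points. Carefully leveraging the absolute irreducibility of $\Gamma_{[n]}$ to promote the set-theoretic uniqueness of the preimage into an $\FF_q(t)$-rational existence is therefore the crux of the argument.
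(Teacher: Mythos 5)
Your reduction to the kernel $V := \Ker\pi_n$ is the right starting point, and the conjugation computation showing $V(\oFqt)$ is stable under independent $\GG_m^2$-scaling is correct (it matches the argument already used in the proof of Lemma~\ref{L:dimGamman}). But you have put your finger on the gap yourself without filling it: knowing $V(\oFqt)=\{e\}$ only trivializes $V_{\mathrm{red}}$ and does not rule out an infinitesimal kernel such as $\alpha_p$, and a nontrivial infinitesimal $V$ would make $\pi_n$ a purely inseparable isogeny (like Frobenius) for which neither injectivity nor surjectivity on $\FF_q(t)$-points follows. Your Galois-descent sketch only descends from the separable closure $\FF$ to $\FF_q(t)$ and says nothing about the inseparable direction, so the ``crux'' you name at the end is a genuine missing step, not a technicality.

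The paper's proof supplies exactly the missing ingredient: it shows that the induced tangent map $\mathrm{d}\pi_n \colon \Lie\Gamma_{[n]} \to \Lie\Gamma_{[n-1]}$ is \emph{surjective}, by explicitly constructing one-dimensional subgroups $T_i'$, $U_i'$, $G_j'$, $H_j'$ of $\Gamma_{[n]}$ (Zariski closures of suitable cyclic subgroups, plus conjugates) that map isomorphically under $\pi_n$ onto $T_i$, $U_i$, $G_j$, $H_j$, whose Lie algebras span $\Lie\Gamma_{[n-1]}$. Since $\Gamma_{[n]}$ is smooth over $\oFqt$ (Theorem~\ref{T:GalThy}(b)) and $\dim\Gamma_{[n]}=\dim\Gamma_{[n-1]}$, surjectivity of $\mathrm{d}\pi_n$ forces $\Lie V = 0$; combined with $\dim V = 0$ and the vector-group structure from Lemma~\ref{L:dimGamman}, this kills the infinitesimal possibility and shows $\Ker\pi_n$ is the trivial group scheme, defined over $\FF_q(t)$ (via the cited Springer results). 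Only then does the isomorphism on $\FF_q(t)$-rational points follow. So your approach is not a variant of the paper's argument that happens to stop early --- it is missing the Lie-algebra lifting construction that constitutes most of the paper's proof, and without some replacement for it the argument cannot be completed.
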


\begin{proof}
If we show that the induced tangent map
\[
\mathrm{d}\pi_{n}: \Lie \Gamma_{[n]}\to \Lie \Gamma_{[n-1]}
\]
is a surjective Lie algebra homomorphism, then we have that $\Ker
\pi_{n}$ is defined over $\FF_q(t)$ (see
\cite[Cor.~12.1.3]{T.A.Springer}).  Furthermore, by Lemma
\ref{L:dimGamman}, $\Ker \pi_{n}$ is a zero-dimensional linear space
over $\FF_q(t)$ and hence \cite[Prop.~12.3.4]{T.A.Springer} implies
the isomorphism $\Gamma_{[n]}(\FF_q(t))\cong
\Gamma_{[n-1]}(\FF_q(t))$ induced by $\pi_{n}$.

Let $T_{i}$, $U_{i}$, $G_{j}$, $H_{j}$ be defined as above for
$i=1$, $2$, $j=1, \dots, n-1$, and note that the Lie algebras of
these $2n+2$ algebraic groups span $\Lie \Gamma_{[n-1]}$. To show
the surjection of $\mathrm{d}\pi_{n}$, we need only construct
one-dimensional algebraic subgroups $T_{i}'$, $U_{i}'$, $G_{j}'$,
$H_{j}'$, of $\Gamma_{[n]}$ so that
\[
T_{i}'\cong T_{i}, \quad U_{i}'\cong U_{i}, \quad G_{j}'\cong
G_{j},\quad H_{j}'\cong H_{j}\quad \textnormal{via $\pi_{n}$}
\]
for $i=1$, $2$, $j=1, \dots, n-1$.  Then $\mathrm{d}\pi_{n}:
\Lie\Gamma_{[n]}\twoheadrightarrow \Lie \Gamma_{[n-1]}$ is
surjective since $\Lie(\cdot)$ is a left exact functor of algebraic
groups.

Since $\Ker \pi_{n}$ is a zero-dimensional vector group, $\pi_{n}$
is injective on points. Using this property, one checks directly
that
\begin{itemize}
\item the $Y_{n}$-coordinates of
$\pi_{n}^{-1}(T_{1})$, $\pi_{n}^{-1}(U_{1})$, $\pi_{n}^{-1}(G_{1}),
\dots ,\pi_{n}^{-1}(G_{n-1})$, are zero;
\item the $X_{n}$-coordinates of
$\pi_{n}^{-1}(T_{2})$, $\pi_{n}^{-1}(U_{2})$, $\pi_{n}^{-1}(H_{1}),
\dots, \pi_{n}^{-1}(H_{n-1})$ are zero.
\end{itemize}
To construct $T_{1}'$, $T_{2}'$, we let $a$, $b\in \oFqt^{\times}
\setminus \overline{\FF_q}^{\times }$ and pick $\gamma_{1}$,
$\gamma_{2}\in \Gamma_{[n]}(\oFqt)$ so that
\[
\pi_{n}(\gamma_{1})=\begin{bmatrix}
a &0 & \cdots & 0\\
0 &1 & \cdots &0 \\
\vdots  & \vdots & \ddots & \vdots \\
0  &0 &\cdots  & 1
\end{bmatrix}
\textnormal{\ and\ }
\pi_{n}(\gamma_{2})= \begin{bmatrix}
1 &0 & \cdots & 0\\
0 &b & \cdots &0 \\
\vdots  & \vdots & \ddots & \vdots \\
0  &0 &\cdots  & 1
\end{bmatrix}.
\]
We let $T_{i}'$ be the Zariski closure of the cyclic subgroup of
$\Gamma_{[n]}$ generated by $\gamma_{i}$ (inside $\Gamma_{[n]}$) for
$i=1$, $2$.  Then one checks directly that $T_{i}'$ is a
one-dimensional torus in $\Gamma_{[n]}$, and the restriction of
$\pi_{n}$ to $T_{i}'$ induces an isomorphism $T_{i}'\cong T_{i}$ for
$i=1,2$, (cf.\ \cite[\S 6.2.4]{Papanikolas}). More precisely, the
defining equations of $T_{1}'$, $T_{2}'$ can be written as follows:
\begin{equation}\label{E:T1primeDefEqs}
T_{1}':\left \{
\begin{array}{ll}
  (a-1)X_{n}-c(X_{11}-1)=0,& X_{12}=0 \\
  X_{21}=0 ,& X_{22}-1=0\\
  X_{1}=0,& Y_{1}=0 \\
  \quad\vdots& \quad\vdots \\
  X_{n-1}=0,&Y_{n}=0
\end{array}
\right\},
\end{equation}
\begin{equation} \label{E:T2primeDefEqs}
 T_{2}':\left \{
\begin{array}{ll}
  X_{11}-1=0, & X_{12}=0\\
  X_{21}=0 ,& (b-1)Y_{n}-d(X_{22}-1)=0\\
  X_{1}=0 ,&Y_{1}=0 \\
  \quad\vdots & \quad\vdots\\
  X_{n}=0,& Y_{n-1}=0
\end{array}
\right\},
\end{equation}
where $c$ is the $(n+2,1)$-entry of $\gamma_{1}$ and $d$ is the
$(n+2,2)$-entry of $\gamma_{2}$.

For the constructions of $U_{1}'$, $U_{2}'$, we let $u_{i}\in
U(\FF_q(t))$ be an $\FF_q(t)$-rational basis for the one-dimensional
vector group $U_{i}$  and pick $u_{i}'\in \Gamma_{[n]}(\oFqt)$ so
that $\pi_{n}(u_{i}')=u_{i}$ for $i=1$, $2$. We define $U_{i}'$ to
be the one-dimensional vector group in $\Gamma_{[n]}$ via the
conjugations
\[
\eta_{1}^{-1}u_{1}\eta_{1}, \quad \eta_{2}^{-1}u_{2}\eta_{2}, \quad
\textnormal{for\ } \eta_{i}\in T_{i}',\ i=1,2.
\]
Then we see that $U_{i}'\cong  U_{i}$ via $\pi_{n}$ for $i=1$, $2$.

Finally we use the method above as well as conjugations to construct
the desired $G_{j}'$, $H_{j}'$ so that $G_{j}'\cong  G_{j}$ and
$H_{j}'\cong H_{j}$ via $\pi_{n}$, for $j=1,\dots ,n-1$.  The
arguments are essentially the same as the constructions of $T_i'$
and $U_i'$, and we omit the details.
\end{proof}

\subsection{Defining equations for $\Gamma_{[n]}$}
\label{S:GammanDefEqs} We continue with the notation of the previous
section, and we assume that $\Gamma_{[n-1]}$ has full dimension for
some $1 \leq n \leq m$.  Furthermore, we assume that $\Gamma_{[n]}$
does not have full dimension, and so by Lemma~\ref{L:dimGamman},
$\dim \Gamma_{[n]} = \dim \Gamma_{[n-1]}$.  Since we have shown that
$\pi_n :\Gamma_{[n]}(\FF_q(t))\iso \Gamma_{[n-1]}(\FF_q(t))$, for
any $a$, $b \in \FF_q(t)^{\times}\setminus \FF_q^{\times}$ we can
pick $\gamma_{1}$, $\gamma_{2}\in \Gamma_{[n]}(\FF_q(t))$ so that
\[
 \pi_{n}(\gamma_{1})=\begin{bmatrix}
a &0 & \cdots & 0\\
0 &1 & \cdots &0 \\
\vdots  & \vdots & \ddots & \vdots \\
0  &0 &\cdots  & 1
\end{bmatrix}
\textnormal{\ and\ }
\pi_{n}(\gamma_{2})= \begin{bmatrix}
1 &0 & \cdots & 0\\
0 &b & \cdots &0 \\
\vdots  & \vdots & \ddots & \vdots \\
0  &0 &\cdots  & 1
\end{bmatrix}.
\]
If we let $T_{i}'$ be the Zariski closure of the cyclic group
generated by $\gamma_{i}$ (inside $\Gamma_{[n]}$), then the defining
equations of $T_{i}'$ are given as in \eqref{E:T1primeDefEqs} and
\eqref{E:T2primeDefEqs} for $i=1$, $2$. We shall note that $c$ and
$d$ in \eqref{E:T1primeDefEqs} and \eqref{E:T2primeDefEqs} are in
$\FF_q(t)$.

Let $V_{1}$ be the $n$-dimensional vector group over $\FF_q(t)$ in
$\Gamma_{[n-1]}$ spanned by $U_{1}$, $G_{1}, \dots, G_{n-1}$ given
as above. Let $U_{1}'$, $G_{1}', \dots, G_{n-1}'$ be the
one-dimensional vector groups in $\Gamma_{[n]}$ given as in the
proof of Proposition \ref{P:IsomRatPts}.  Since we have shown that
$\pi_n:\Gamma_{[n]}(\FF_q(t))\iso \Gamma_{[n-1]}(\FF_q(t))$, we see
that these one-dimensional vector groups are defined over
$\FF_q(t)$, and since $\mathrm{d}\pi_n$ is surjective they are
defined by linear equations. Let $V_{1}'$ be the $n$-dimensional
vector group over $\FF_q(t)$ in $\Gamma_{[n]}$ spanned by $U_{1}'$,
$G_{1}', \dots, G_{n-1}'$.

Similar to the constructions above, we let $V_{2}$ be the
$n$-dimensional vector group over $\FF_q(t)$ in $\Gamma_{[n-1]}$
spanned by $U_{2}$, $H_{1}, \dots, H_{n-1}$.  Let
$U_{2}'$, $H_{1}',\dots, H_{n-1}'$ be the one-dimensional vector groups
in $\Gamma_{[n]}$ given as in the proof of Proposition
\ref{P:IsomRatPts}.  We define $V_{2}'$ to be the $n$-dimensional
vector group over $\FF_q(t)$ in $\Gamma_{[n]}$ spanned by
$U_{2}'$, $H_{1}', \dots, H_{n-1}'$ and note that $V_{i}'$ is isomorphic
to $V_{i}$ via $\pi_{n}$ for $i=1$, $2$.

Note that the defining equations of $V_{1}'$, $V_{2}'$ are given as
follows:
\begin{gather*}
V_{1}':\left\{
\begin{array}{l}
  X_{11}-1=0,\ r_{21}X_{21}+r_{1}X_{1}+\cdots+r_{n}X_{n}=0 \\
  X_{12}=0,\ X_{22}-1=0,\ Y_{1}=0,\dots,Y_{n}=0
\end{array}\right\},
\\
V_{2}':\left\{
\begin{array}{l}
  X_{11}-1=0,\ X_{21}=0,\ X_{1}=0, \dots,X_{n}=0 \\
  X_{22}-1=0,\ s_{12}X_{12}+s_{1}Y_{1}+\dots+s_{n}Y_{n}=0
\end{array}\right\},
\end{gather*}
for some $r_{21}$, $r_{1}, \dots, r_{n}$, $s_{12}$, $s_{1}, \dots,
s_{n} \in \FF_q(t)$.  Note that, since $V_{i}'$ is isomorphic to
$V_{i}$ via $\pi_{n}$ for $i=1$, $2$, we have that $r_{n}\neq 0$,
$s_{n}\neq 0$.

We define $P_{i}'$ to be the Zariski closure of the subgroup
generated by $T_{i}'$ and $V_{i}'$ inside $\Gamma_{[n]}$ for $i=1$,
$2$.  Then we see that for each $i=1$, $2$, $P_{i}'$ is the
$(n+1)$-dimensional affine linear space containing $T_{i}'$ and
$V_{i}'$ (cf.\ \cite[\S 6.2.4]{Papanikolas}), and hence their
defining equations can be described as follows:
\begin{equation} \label{E:P1primeP2prime}
\begin{gathered}
P_{1}':\left\{
\begin{array}{l}
\phi_{1}:=(a-1) (r_{21}X_{21}+r_{1}X_{1} + \cdots +
r_{n}X_{n})-c r_{n}(X_{11}-1)=0, \\
X_{12}=0,\ X_{22}-1=0,\ Y_{1}=0,\dots,Y_{n}=0
\end{array}\right\}
\\
P_{2}':\left\{
\begin{array}{l}
  X_{11}-1=0,\ X_{21}=0,X_{1}=0,\dots,X_{n}=0, \\
  \phi_{2}:=(b-1) (s_{12}X_{12}+s_{1}Y_{1}+ \cdots +
s_{n}Y_{n})-d s_{n}(X_{22}-1)=0
\end{array}\right\}.
\end{gathered}
\end{equation}
Note that $\dim P_{i}'=n+1$ for $i=1$, $2$. Consider now the morphism
defined by the product of matrices,
\[
P_{1}'\times  P_{2}' \to \Gamma_{[n]}.
\]
Its image is denoted by $P_{1}' \cdot P_{2}'$. We claim that
the Zariski closure $\overline{P_{1}'\cdot P_{2}'}$ of $P_{1}'\cdot
P_{2}'$ inside $\Gamma_{[n]}$ is all of $\Gamma_{[n]}$.

To prove this claim, we define $P_{i}$ to be the Zariski closure of
the subgroup of $\Gamma_{[n-1]}$ generated by $T_{i}$ and $V_{i}$
for $i=1$, $2$.  That is, since $\Gamma_{[n-1]}$ has full dimension,
\[
P_{1}:=\left\{ \left[%
\begin{matrix}
  * & 0 & 0 & \cdots & 0 \\
  * & 1 & 0 & \cdots & 0 \\
  * & 0 & 1 & \cdots & 0 \\
  \vdots & \vdots & \vdots & \ddots & 0 \\
  * & 0 & 0 & \cdots & 1
\end{matrix}
\right]\right\}\subseteq G_{[n-1]},\quad
P_{2}:=\left\{ \left[%
\begin{matrix}
  1 & * & 0 & \cdots & 0 \\
  0 & * & 0 & \cdots & 0 \\
  0 & * & 1 & \cdots & 0 \\
  \vdots & \vdots & \vdots & \ddots & 0 \\
  0 & * & 0 & \cdots & 1
\end{matrix}
\right]\right\}\subseteq G_{[n-1]}.
\]
Direct calculation shows that
\[
G_{[n-1]}=\overline{P_{1}\cdot P_{2}}\cup V(X_{11}),
\]
where $V(X_{11})$ is the closed subvariety of $G_{[n-1]}$ given by
$X_{11}=0$. Hence, $\Gamma_{[n-1]}=\overline{P_{1}\cdot P_{2}}$ since
$\Gamma_{[n-1]} \subseteq G_{[n-1]}$ is assumed to be irreducible of
maximal dimension.

Furthermore, since we have a surjective map
$P_{i}'\twoheadrightarrow P_{i}$ induced by $\pi_{n}$ for $i=1$,
$2$, the restriction of $\pi_{n}$ to $\overline{P_{1}' \cdot
P_{2}'}$ is dominant and hence $\dim \overline{P_{1}' \cdot P_{2}' }
\geq 2+2n$.  From the assumptions that $\dim \Gamma_{[n]} = \dim
\Gamma_{[n-1]}=2+2n$ and that $\Gamma_{[n]}$ is irreducible, we see
that $\overline{P_{1}' \cdot
  P_{2}'}=\Gamma_{[n]}$.

On the other hand, one can similarly show that
$\overline{P_{2}'\cdot P_{1}'}=\Gamma_{[n]}$.  We omit the details.

Now, we claim that $\phi_{1}$, $\phi_{2}$, as defined in
\eqref{E:P1primeP2prime}, give rise to defining equations for
$\Gamma_{[n]}$. For polynomials $g_{1},\dots,g_{m}\in
\FF_q(t)[\bX_{n}]$, we let $ V(g_{1},\dots,g_{m})$ be the closed
subvariety of $G_{[n]}$ given by $g_{1}=\cdots=g_{m}=0$.  Given any
\begin{equation} \label{E:p1p2}
\bp_{1}=\begin{bmatrix}
  x_{11} & 0 & 0 & \cdots & 0 \\
  x_{21} & 1 & 0 & \cdots & 0 \\
  x_{1} & 0 & 1 & \cdots & 0 \\
  \vdots & \vdots & \vdots & \ddots & 0 \\
  x_{n} & 0 & 0 & \cdots & 1
\end{bmatrix}
\in P_{1}'
\textnormal{\ and\ }
\bp_{2}=\begin{bmatrix}
  1 & x_{12} & 0 & \cdots & 0 \\
  0 & x_{22} & 0 & \cdots & 0 \\
  0 & y_{1} & 1 & \cdots & 0 \\
  \vdots & \vdots & \vdots & \ddots & 0 \\
  0 & y_{2} & 0 & \cdots & 1
\end{bmatrix}
\in P_{2}',
\end{equation}
from the matrix product,
\[
\bp_{1} \bp_{2}= \begin{bmatrix}
  x_{11} & * & 0 & \cdots & 0 \\
  x_{21} & * & 0 & \cdots & 0 \\
  x_{1} & * & 1 & \cdots & 0 \\
  \vdots & \vdots & \vdots & \ddots & 0 \\
  x_{n} & * & 0 & \cdots & 1
\end{bmatrix}
\in P_{1}'\cdot P_{2}',
\]
we see that $V(\phi_{1})\supseteq P_{1}'\cdot  P_{2}'$,
and hence $ V(\phi_{1})\supseteq \Gamma_{[n]}$.
A similar calculation using the matrix product $\bp_2 \bp_1 \in P_2'
\cdot P_1'$ shows that $V(\phi_2) \supseteq P_2'\cdot P_1'$.
Furthermore, because $\phi_{1}$, $\phi_{2}$ are degree one
polynomials, $ V(\phi_{1},\phi_{2}) $ is irreducible, and therefore
$V(\phi_{1},\phi_{2})= \Gamma_{[n]}$, as claimed.

Let $\phi_{1} := \ell_{11}X_{11} + \ell_{21}X_{21} + \ell_{1}X_{1} +
\dots + \ell_{n}X_{n} - \ell_{11}$, where
\[
\ell_{11}=-cr_{n},\quad \ell_{21}=(a-1)r_{21},\ell_{i}=(a-1)r_{i},
\quad i=1,\dots,n.
\]
Note that all coefficients of $\phi_{1}$ are in $\FF_q(t)$
and $\ell_{n}\neq 0$.  Finally we claim that, without loss of
generality, $\phi_{2}$ can be written as
\[
\phi_{2} = \ell_{11}X_{12} + \ell_{21}X_{22} + \ell_{1}Y_{1} + \cdots
+ \ell_{n} Y_{n} - \ell_{21}.
\]
To verify the claim, we note that, modulo $(b-1)$, without loss of
generality we may write $\phi_{2}$ as $\phi_{2} = s_{12}X_{12} +
s_{22}X_{22} + \sum_{i=1}^{n} s_{i}Y_{i} - s_{22}$, where
$s_{22}:=\frac{-d s_{n}}{b-1}$.  Choose an element of the form
\[
\bp_{2}= \begin{bmatrix}
  1 & 1 & 0 & \cdots & 0 \\
  0 & 1 & 0 & \cdots & 0 \\
  0 & 0 & 1 & \cdots & 0 \\
  \vdots & \vdots & \vdots & \ddots & 0 \\
  0 & y_{n} & 0 & \cdots & 1
\end{bmatrix}
\in P_{2}'(\oFqt), \quad \textnormal{i.e., $\phi_{2}({\bf p}_{2})=0.$}
\]
We choose an arbitrary $\bp_1 \in P_1'(\oFqt)$ as in \eqref{E:p1p2},
and because $\phi_1(\bp_1) = 0$ and $\bp_{1}\bp_{2} \in
V(\phi_{1},\phi_{2})$, a direct calculation shows that
\[
s_{12}x_{11} + s_{22}x_{21} + \sum_{i=1}^{n}s_{i}x_{i} -s_{12}=0.
\]
Namely, $P_{1}'$ is contained in the $(n+1)$-dimensional affine
linear space given by
\[
V\biggl( s_{12}X_{11} + s_{22}X_{21} + \sum_{i=1}^{n} s_{i}X_{i}-s_{12},\
X_{12},\ X_{22}-1,\ Y_{1},\dots,Y_{n} \biggr).
\]
Since $P_{1}'$ is also an affine linear space of dimension $n+1$, we
see that the two vectors
\[
(\ell_{11},\ell_{21},\ell_{1},\dots,\ell_{n}),\quad
(s_{12},s_{22},s_{1},\cdots ,s_{n} )
\]
are parallel, which completes the claim.  We summarize the
investigations of this section in the following lemma.

\begin{lemma}\label{L:GammanEqs}
  Let $\{ \Gamma_{[n]}\}_{0\leq n \leq m}$ be a sequence of groups
  defined as in \S\ref{S:GammanDef}. Suppose that $\Gamma_{[n-1]}$ has
  full dimension for some $1\leq n \leq m$ but that $\dim
  \Gamma_{[n]}= \dim\Gamma_{[n-1]}$.  Then there exist
  $\ell_{11},\ell_{21},\ell_{1},\cdots,\ell_{n}\in \FF_q(t)$ with
  $\ell_{n}\neq 0$ so that
\begin{gather*}
  \phi_{1} := \ell_{11}X_{11} + \ell_{21}X_{21} + \ell_{1}X_{1}
+ \dots + \ell_{n}X_{n} - \ell_{11},   \\
  \phi_{2} := \ell_{11}X_{12} + \ell_{21}X_{22} + \ell_{1}Y_{1}
+ \dots + \ell_{n}Y_{n}-\ell_{21}
\end{gather*}
are defining polynomials for $\Gamma_{[n]}$.
\end{lemma}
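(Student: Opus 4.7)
The plan is to unpack the explicit construction developed throughout \S\ref{S:GammanDefEqs} into a clean proof, leveraging Proposition \ref{P:IsomRatPts} at every step to keep everything rational over $\FF_q(t)$. Under the hypothesis $\dim \Gamma_{[n]} = \dim \Gamma_{[n-1]}$, the proposition gives an isomorphism $\pi_n \colon \Gamma_{[n]}(\FF_q(t)) \iso \Gamma_{[n-1]}(\FF_q(t))$, so every $\FF_q(t)$-rational element of $\Gamma_{[n-1]}$ lifts uniquely to an $\FF_q(t)$-rational element of $\Gamma_{[n]}$. This rationality is what will eventually force the defining polynomials $\phi_1$, $\phi_2$ to have coefficients in $\FF_q(t)$.

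First I would produce the $\FF_q(t)$-rational lifts $\gamma_1, \gamma_2$ of the diagonal matrices $\mathrm{diag}(a,1,\dots,1)$ and $\mathrm{diag}(1,b,1,\dots,1)$ for $a, b \in \FF_q(t)^\times \setminus \FF_q^\times$, and take $T_i'$ to be the Zariski closures of the cyclic subgroups they generate; this gives one-dimensional tori in $\Gamma_{[n]}$ with defining equations \eqref{E:T1primeDefEqs} and \eqref{E:T2primeDefEqs}. Next, the one-dimensional vector groups $U_i', G_j', H_j'$ constructed in the proof of Proposition \ref{P:IsomRatPts} span two $n$-dimensional vector groups $V_1', V_2' \subseteq \Gamma_{[n]}$ defined over $\FF_q(t)$. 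Forming the Zariski closures $P_i' := \overline{T_i' \cdot V_i'}$ produces $(n+1)$-dimensional affine linear subspaces cut out by the single linear equations $\phi_1$, $\phi_2$ displayed in \eqref{E:P1primeP2prime}.

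The heart of the argument, and the step I expect to be the main obstacle, is showing $\overline{P_1' \cdot P_2'} = \Gamma_{[n]}$. The plan is a dominance-plus-dimension argument: since $\Gamma_{[n-1]}$ has full dimension inside the irreducible $G_{[n-1]}$, a direct matrix computation gives $\overline{P_1 \cdot P_2} = \Gamma_{[n-1]}$; the restrictions $\pi_n \colon P_i' \twoheadrightarrow P_i$ are surjective, so $\pi_n$ is dominant on $\overline{P_1' \cdot P_2'}$, forcing $\dim \overline{P_1' \cdot P_2'} \geq 2+2n = \dim \Gamma_{[n]}$. Irreducibility of $\Gamma_{[n]}$ then gives equality, and the symmetric argument gives $\overline{P_2' \cdot P_1'} = \Gamma_{[n]}$. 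A direct matrix multiplication $\bp_1 \bp_2$ as in \eqref{E:p1p2} shows $V(\phi_1) \supseteq P_1' \cdot P_2'$, and the symmetric multiplication gives $V(\phi_2) \supseteq P_2' \cdot P_1'$; taking closures, $\Gamma_{[n]} \subseteq V(\phi_1,\phi_2)$. Since $\phi_1$ and $\phi_2$ are linear, $V(\phi_1,\phi_2)$ is itself an irreducible variety of dimension $2+2n$, and hence equals $\Gamma_{[n]}$.

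Finally I would normalize the equations to the symmetric form. Writing out $\phi_1 = \ell_{11}X_{11} + \ell_{21}X_{21} + \ell_1 X_1 + \cdots + \ell_n X_n - \ell_{11}$ is immediate from \eqref{E:P1primeP2prime}, with $\ell_n \neq 0$ coming from $r_n \neq 0$ and $a \neq 1$. Matching the coefficients of $\phi_2$ to those of $\phi_1$ is the final delicate step: I would choose a specific element $\bp_2 \in P_2'(\oFqt)$ of the explicit form used in \S\ref{S:GammanDefEqs} (with $x_{12} = x_{22} = 1$ and only the last entry $y_n$ free), then impose $\phi_1(\bp_1 \bp_2) = 0$ for every $\bp_1 \in P_1'$. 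Reading off the resulting linear relation on the coefficients $(s_{12}, s_{22}, s_1, \dots, s_n)$ of $\phi_2$ shows they are proportional to $(\ell_{11}, \ell_{21}, \ell_1, \dots, \ell_n)$, and after rescaling $\phi_2$ we obtain the common-coefficient form asserted in the lemma.
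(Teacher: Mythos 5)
Your proposal reproduces the paper's own argument in \S\ref{S:GammanDefEqs} essentially verbatim: the $\FF_q(t)$-rational lifts $\gamma_1,\gamma_2$, the tori $T_i'$ and vector groups $V_i'$, the affine linear subspaces $P_i'$ cut out by $\phi_1,\phi_2$, the dominance-plus-dimension argument giving $\overline{P_1'\cdot P_2'}=\Gamma_{[n]}$, the matrix-product inclusions $V(\phi_1)\supseteq P_1'\cdot P_2'$ and $V(\phi_2)\supseteq P_2'\cdot P_1'$ with irreducibility of the linear variety forcing $V(\phi_1,\phi_2)=\Gamma_{[n]}$, and the final coefficient-matching normalization. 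One small slip in that last step: you write ``impose $\phi_1(\bp_1\bp_2)=0$,'' but this is vacuous since the first column of $\bp_1\bp_2$ equals that of $\bp_1$ and hence $\phi_1(\bp_1\bp_2)=\phi_1(\bp_1)=0$ automatically; the condition that actually produces the linear relation on $(s_{12},s_{22},s_1,\dots,s_n)$, as in the paper, is $\phi_2(\bp_1\bp_2)=0$.
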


\subsection{Application to Drinfeld logarithms}
\label{S:AppDrinLogs}

We fix a rank $2$ Drinfeld $\FF_q[t]$-module $\rho$ over $\ok$ with
$\rho_{t}:=\theta + \kappa \tau+\tau^{2}$, $\kappa\in \ok$, and we fix
a $\FF_q[\theta]$-basis $\{ \omega_{1},\omega_{2} \}$ of the period
lattice $\Lambda_{\rho} := \Ker \exp_{\rho}$. We let $\Phi :=
\Phi_{\rho}$, $\Psi := \Psi_{\rho}$, $\xi$, and $\Omega$ be defined as
in \S\ref{S:Rank2}.

Given $\lambda\in \CC_{\infty}$ with $\exp_{\rho}(\lambda)
=:\alpha\in \ok$, let $f_{\lambda}$ be the Anderson generating
function associated to $\lambda$ as in \eqref{E:AndGF}. We
define
\[
\bg := \begin{bmatrix}
g_{1} \\
g_{2}
\end{bmatrix}
:=
\begin{bmatrix}
-\kappa f_{\lambda}^{(1)}-f_{\lambda}^{(2)}  \\
-f_{\lambda}^{(1)}
\end{bmatrix}
\in \Mat_{2\times 1}(\TT).
\]
By \eqref{E:fu1} and \eqref{E:fu2} we see that
\begin{equation}\label{E:LogFnEq}
\Phi^{\mathrm{tr}}\bg^{(-1)}=
  \bg +
\begin{bmatrix}
\alpha \\ 0
\end{bmatrix},
\quad
g_{1}(\theta) = \lambda-\alpha,
\quad
g_{2}(\theta)=-F_{\tau}(\lambda).
\end{equation}
Given $\lambda_{1},\dots,\lambda_{m}\in \CC_{\infty}$ with
$\exp_{\rho}(\lambda_{i}) =: \alpha_{i}\in \ok$ for $i=1, \dots, m$,
for each $1\leq n \leq m$ let
\[
\bg_{n}:=
\begin{bmatrix}
g_{n1} \\
g_{n2}
\end{bmatrix}
:= \begin{bmatrix}
-\kappa f_{\lambda_{n}}^{(1)}-f_{\lambda_{n}}^{(2)}  \\
-f_{\lambda_{n}}^{(1)}
\end{bmatrix}
\quad\textnormal{and}\quad
\mathbf{h}_{n}:=
\begin{bmatrix}
\alpha_n \\ 0
\end{bmatrix}.
\]
We further define
\[
\Phi_{n}:=\begin{bmatrix}
\Phi & \mathbf{0} & \cdots & \mathbf{0} \\
\bh_{1}^{\mathrm{tr}} & 1 & \cdots & 0 \\
\vdots &\vdots  & \ddots & \vdots \\
\bh_{n}^{\mathrm{tr}} & 0 &\cdots  & 1 \\
\end{bmatrix}
\in\Mat_{2+n}(\ok[t]),\quad
\Psi_{n}:=\begin{bmatrix}
\Psi & \mathbf{0} & \cdots & \mathbf{0} \\
\bg_{1}^{\mathrm{tr}}\Psi & 1 & \cdots & 0 \\
\vdots & \vdots & \ddots & \vdots \\
\bg_{n}^{\mathrm{tr}}\Psi & 0 & \cdots & 1 \\
\end{bmatrix}
\in\GL_{2+n}(\TT).
\]
Using \eqref{E:LogFnEq} we have $\Psi_{n}^{(-1)}=\Phi_{n} \Psi_{n}$
and thus
\begin{equation}\label{E:okPsintheta}
\ok(\Psi_{n}(\theta)) = \ok(\omega_{1}, \omega_{2}, \lambda_{1}, \dots,
\lambda_{n}, F_{\tau}(\omega_{1}), F_{\tau}(\omega_{2}),
F_{\tau}(\lambda_{1}),\dots,F_{\tau}(\lambda_{n})).
\end{equation}

\begin{proposition}
  For each $1\leq n \leq m$, let $\Phi_{n}$ be defined as above. Then
  $\Phi_{n}$ defines a $t$-motive $M_{n}$.
\end{proposition}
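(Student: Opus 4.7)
The plan is to show that the pre-$t$-motive cut out by $\Phi_n$ is rigid analytically trivial and lies in the Tannakian subcategory $\cT \subset \mathcal{R}$ of $t$-motives. First I would define $M_n$ concretely as the $(2+n)$-dimensional $\ok(t)$-vector space with basis $\bm_n = [m_1, m_2, e_1, \ldots, e_n]^{\mathrm{tr}}$ and $\sigma$-action $\sigma\bm_n = \Phi_n\bm_n$, so that $M_n$ is a pre-$t$-motive by construction. The functional equation $\Psi_n^{(-1)} = \Phi_n\Psi_n$ is a direct consequence of \eqref{E:LogFnEq} together with $\Psi^{(-1)} = \Phi\Psi$: the upper $2\times 2$ block restates the Drinfeld module case, while the lower blocks encode $\bg_i^{\mathrm{tr}}\Psi \cdot \Phi^{-1} \mapsto \bg_i^{\mathrm{tr},(-1)}\Psi^{(-1)} - \bh_i^{\mathrm{tr}}\Psi_n^{(-1)}$, which reduces to \eqref{E:LogFnEq}. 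Because $\Psi_n$ is block lower triangular with $\Psi\in \GL_2(\TT)$ in the upper left and $I_n$ in the lower right, we have $\det\Psi_n = \det\Psi\in \TT^{\times}$, so $\Psi_n\in \GL_{2+n}(\TT)$. Hence $M_n$ is rigid analytically trivial and belongs to $\mathcal{R}$.

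Next I would exhibit $M_n$ as an extension of $t$-motives. The block-triangular form of $\Phi_n$ shows that the $\ok(t)$-span of $m_1, m_2$ is a sub-pre-$t$-motive isomorphic to $M_\rho$ (the $t$-motive of $\rho$ from \S\ref{S:Rank2}), while $\sigma\bar{e}_i = \bar{e}_i$ in the quotient makes the quotient isomorphic to $\mathbf{1}^{\oplus n}$. Thus we have a short exact sequence
\[
0 \longrightarrow M_\rho \longrightarrow M_n \longrightarrow \mathbf{1}^{\oplus n} \longrightarrow 0
\]
in $\mathcal{R}$, with both $M_\rho$ and $\mathbf{1}^{\oplus n}$ lying in $\cT$.

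The last step is to conclude that $M_n$ itself lies in $\cT$. Here I would invoke the closure of $\cT$ under extensions inside $\mathcal{R}$, a property of the essential image of $\mathcal{AR}^I\to\mathcal{R}$ established by Papanikolas. Equivalently, one may produce an explicit Anderson $t$-motive $\cM_n$ realizing $M_n$: take the free $\ok[t]$-module of rank $2+n$ with basis $\bm_n$ and $\sigma$-action given by $\Phi_n$; freeness over $\ok[t]$ is by construction, and the Anderson condition $(t-\theta)\cM_n\subseteq\sigma\cM_n$ is immediate from $\det\Phi_n = -(t-\theta)$ together with Cramer's rule applied to $\bm_n = \Phi_n^{-1}(\sigma\bm_n)$.

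The main obstacle is the closure-under-extensions step: Tannakian subcategories of an abelian category are not automatically closed under extensions in the ambient category, so one cannot deduce $M_n\in\cT$ merely from $M_\rho,\mathbf{1}\in\cT$ without an additional input. Equivalently, if one tries the direct Anderson approach, the delicate axiom to verify is finite generation (hence freeness, since $\ok[\sigma]$ is an Ore PID and $\cM_n$ is $\sigma$-torsion-free because $\det\Phi_n\neq 0$) as a left $\ok[\sigma]$-module; this requires care, because $t^k e_i$ is not visibly a $\ok[\sigma]$-combination of the obvious generators $m_1, e_1, \ldots, e_n$, and one must exploit the specific form of the relations $\sigma e_i = \alpha_i m_1 + e_i$ and $\sigma^2 m_1 = (t-\theta)m_1 - \kappa^{(-1)}\sigma m_1$ together with the identity $(t-\theta)\cM_n\subseteq \sigma\cM_n$ to reduce to finitely many generators.
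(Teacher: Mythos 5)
Your setup is sound: defining $M_n$ as a pre-$t$-motive via $\Phi_n$, verifying $\Psi_n^{(-1)} = \Phi_n\Psi_n$ so that $M_n$ is rigid analytically trivial and lies in $\mathcal{R}$, and exhibiting the short exact sequence $0 \to M_\rho \to M_n \to \mathbf{1}^{\oplus n} \to 0$. The paper's own proof is just a citation to \cite[Prop.~6.1.3]{Papanikolas}, observing that the argument there applies once Lemma~\ref{L:Mrho} supplies the Anderson $t$-motive $\cM_\rho$.

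The gap is in your final step, and it is more than a matter of "care." Your first route appeals to a claimed "closure of $\cT$ under extensions inside $\mathcal{R}$ ... established by Papanikolas"; this is not established in \cite{Papanikolas}, and it is not automatic for a strictly full Tannakian subcategory. Your second route, which you present as equivalent, in fact fails outright: the free $\ok[t]$-module $\cM_n$ of rank $2+n$ with $\sigma$-action given by $\Phi_n$ is \emph{not} finitely generated over $\ok[\sigma]$. To see this, note that $\cM_\rho$ embeds as a $\sigma$-stable $\ok[t]$-submodule and the quotient is $\bigoplus_{j=1}^n \ok[t]\,\bar e_j$ with $\sigma\bar e_j = \bar e_j$. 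For any finitely many elements $v_1,\dots,v_k$ of this quotient, the $\ok[\sigma]$-span $\sum_i \ok[\sigma]v_i$ consists of $\ok$-linear combinations of Frobenius twists $v_i^{(-l)}$, and since twisting and $\ok$-scaling do not raise $t$-degree, that span has bounded $t$-degree. Hence the quotient, and with it $\cM_n$, is not finitely generated over $\ok[\sigma]$, and there is no Anderson $t$-motive structure on $\cM_n$ to exhibit. Cramer's rule on $\Phi_n^{-1}$ only gives $(t-\theta)\cM_n \subseteq \sigma\cM_n$, which is the easy axiom; it says nothing about $\ok[\sigma]$-generation.

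The device that rescues the argument, and which is what \cite[Prop.~6.1.3]{Papanikolas} actually does, is to twist by the Carlitz motive. Let $\cN$ be the free $\ok[t]$-module of rank $2+n$ with $\sigma$-action given by $(t-\theta)\Phi_n$. Then the $\ok[t]$-span of the first two basis vectors is $\cM_\rho\otimes\cC$ and the quotient is $\cC^{\oplus n}$; both are Anderson $t$-motives and in particular finitely generated over $\ok[\sigma]$. An extension of finitely generated $\ok[\sigma]$-modules is finitely generated, and $\sigma$-torsion-freeness (from the invertibility of $(t-\theta)\Phi_n$) then yields freeness over the left PID $\ok[\sigma]$. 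Together with $\ok[t]$-freeness and $(t-\theta)^N\cN\subseteq\sigma\cN$, this shows $\cN$ is an Anderson $t$-motive, so $N := \ok(t)\otimes_{\ok[t]}\cN$ lies in $\cT$. Finally $M_n \cong N\otimes C^\vee$ (because $\Phi_n = (t-\theta)^{-1}\cdot(t-\theta)\Phi_n$), and $\cT$ is closed under tensor products and duals, so $M_n\in\cT$. Your extension picture is the right one, but it must be applied after the Carlitz twist, at the level of $\ok[\sigma]$-modules, rather than as a closure property of the Tannakian category.
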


\begin{proof}
  By Lemma \ref{L:Mrho}, $\Phi$ itself defines an
  Anderson $t$-motive $\cM_{\rho}$.  Using this the proof is then
  essentially the same as the proof of
  \cite[Prop.~6.1.3]{Papanikolas}.  We omit the details.
\end{proof}

Suppose that $\rho$ does not have complex multiplication. Let
$M_{0}:=M_{\rho}$ be the $t$-motive associated to $\rho$ (see
\S\ref{S:Rank2}), and let $\Gamma_{[0]} := \Gamma_{M_{0}} =\GL_{2}$ be
its Galois group (see Theorem \ref{T:PerAlgInd}).  For each $1\leq
n\leq m$, let $\Gamma_{[n]}$ be the Galois group of $M_{n}$.  By
\eqref{E:GammaDef}, we see that $\Gamma_{[n]}\subseteq G_{[n]}$.
Moreover, since $M_{n-1}$ is a sub-$t$-motive of $M_{n}$, we have a
surjective map
\[
\pi_{n}:\Gamma_{[n]}\twoheadrightarrow \Gamma_{[n-1]}
\]
(cf.\ proof of Proposition~\ref{P:SurjDet}).  More precisely, for
any $\FF_q(t)$-algebra $R$ the restriction of the action of any
$\gamma\in \Gamma_{[n]}(R)$ to $R\otimes_{\FF_q(t)} M_{n-1}^{B}$ is
the same as the action of the upper left $(n+1) \times (n+1)$ square
of $\gamma$.  Thus we see that the map $\pi_{n}$ coincides with the
projection map on the upper left $(n+1) \times (n+1)$ square of
elements in $\Gamma_{[n]}$. Thus, the sequence $\{ \Gamma_{[n]}
\}_{0\leq n\leq m}$ satisfies the defining conditions of
\S\ref{S:GammanDef}, and the results of \S\ref{S:GammanDef} and
\S\ref{S:GammanDefEqs} apply.

Finally, by Theorem \ref{T:TrdegAndDim} we note that $\dim
\Gamma_{[n]}=\trdeg_{\ok} \ok(\Psi_{n}(\theta))$ for each $0 \leq n
\leq m$.  Combining Theorem~\ref{T:PerAlgInd}, Lemma
\ref{L:dimGamman}, and \eqref{E:okPsintheta}, we now prove a lemma
that is the heart of Theorem~\ref{T:LogMain} which follows.

\begin{lemma}\label{L:redII}
  Suppose that $p$ is odd.  Let $\rho$ be a Drinfeld $\FF_q[t]$-module
  without complex multiplication with $\rho_{t}=\theta+ \kappa \tau
  +\tau^{2}$, $\kappa \in \ok$.  Let $\Lambda_{\rho} :=
  \FF_q[\theta]\omega_{1} + \FF_q[\theta]\omega_{2}$ be the period
  lattice of $\rho$.  Suppose that $\lambda_{1}, \dots, \lambda_{m}\in
  \CC_{\infty}$ satisfy $\exp_{\rho}(\lambda_{i}) =: \alpha_{i}\in
  \ok$ for $i=1, \dots, m$ and that $\omega_{1}, \omega_{2},
  \lambda_{1}, \dots, \lambda_{m}$ are linearly independent over
  $k$. Finally, let $\{ \Gamma_{[n]}\}_{0\leq n \leq m}$ be defined as
  above. If $\Gamma_{[n-1]}$ has full dimension for some $1\leq n \leq
  m$, then so does $\Gamma_{[n]}$. In particular, the $4+2m$ elements
\[
\omega_{1}, \omega_{2}, \lambda_{1}, \dots, \lambda_{m},
F_{\tau}(\omega_{1}), F_{\tau}(\omega_{2}), F_{\tau}(\lambda_{1}),
\dots, F_{\tau}(\lambda_{m})
\]
are algebraically independent over $\ok$.
\end{lemma}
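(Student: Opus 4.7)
The plan is to prove the lemma by induction on $n$, at each step reducing to a contradiction with the Brownawell--Yu linear independence theorem (Theorem~\ref{T:BrownYu}). The base case $n=0$ is precisely Theorem~\ref{T:PerAlgInd}: $\Gamma_{[0]} = \GL_2$, of dimension $4$.

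For the inductive step, assume that $\Gamma_{[n-1]}$ has full dimension $2+2n$. By Lemma~\ref{L:dimGamman}, either $\dim \Gamma_{[n]} = 4+2n$, in which case the inductive step is complete, or $\dim\Gamma_{[n]} = 2+2n$. Assume for contradiction that the latter holds. Then Lemma~\ref{L:GammanEqs} produces defining polynomials
\[
\phi_{1} = \ell_{11}X_{11} + \ell_{21}X_{21} + \sum_{i=1}^{n}\ell_{i}X_{i} - \ell_{11},\quad
\phi_{2} = \ell_{11}X_{12} + \ell_{21}X_{22} + \sum_{i=1}^{n}\ell_{i}Y_{i} - \ell_{21},
\]
of $\Gamma_{[n]}$ with $\ell_{11}, \ell_{21}, \ell_1, \ldots, \ell_n \in \FF_q(t)$ and $\ell_n \neq 0$.

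The key step will be to convert the vanishing of $\phi_1$ and $\phi_2$ on $\Gamma_{[n]}$ into an explicit non-trivial $\ok$-linear relation among the entries of $\Psi_n(\theta)$. Using that $Z_{\Psi_n}$ is a $\Gamma_{[n]}$-torsor (Theorem~\ref{T:GalThy}(c)) together with the functional equation $\Psi_n^{(-1)} = \Phi_n \Psi_n$, one should be able, after clearing $\FF_q(t)$-denominators and specializing at $t=\theta$, to produce an $\ok$-linear relation among
\[
1,\ \omega_1,\ \omega_2,\ \lambda_1, \ldots, \lambda_n,\ F_\tau(\omega_1),\ F_\tau(\omega_2),\ F_\tau(\lambda_1), \ldots, F_\tau(\lambda_n),
\]
with coefficients built out of $\ell_{11}, \ell_{21}, \ell_1, \ldots, \ell_n$ and the period values appearing in \eqref{E:PsiRhoTheta}. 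The hypothesis $\ell_n \neq 0$, combined with \eqref{E:LogFnEq}, should force the pair of coefficients attached to $\lambda_n$ and $F_\tau(\lambda_n)$ not to vanish simultaneously, so that the relation is genuinely non-trivial.

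Since $\rho$ has no complex multiplication, $K_\rho = k$, so the hypothesis that $\omega_1, \omega_2, \lambda_1, \ldots, \lambda_m$ are $k$-linearly independent coincides with $K_\rho$-linear independence. Applying Theorem~\ref{T:BrownYu} to the single biderivation $t \mapsto \tau$, the $5+2n$ displayed quantities are $\ok$-linearly independent, contradicting the relation just obtained. Hence $\Gamma_{[n]}$ has full dimension, completing the induction. Iterating to $n = m$ gives $\dim \Gamma_{[m]} = 4+2m$, and combining Theorem~\ref{T:TrdegAndDim} with \eqref{E:okPsintheta} yields the claimed algebraic independence of the $4+2m$ listed elements over $\ok$. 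The chief obstacle will be the translation from the defining polynomials $\phi_1, \phi_2$ to a manifestly non-trivial $\ok$-linear relation on $\Psi_n(\theta)$, which requires careful tracking of how $\ell_n \neq 0$ propagates through the torsor structure and specialization at $t = \theta$.
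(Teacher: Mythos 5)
Your outline correctly reproduces the skeleton of the paper's argument: induct on $n$, assume $\Gamma_{[n]}$ fails to reach full dimension, invoke Lemma~\ref{L:dimGamman} and Lemma~\ref{L:GammanEqs} to extract $\phi_1, \phi_2$, and convert these into a linear dependence that contradicts the hypothesis. But the "chief obstacle" you flag at the end is not a routine verification---it is the entire technical content of the proof, and the path you sketch through it would not quite work as stated.

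Two specific points. First, after passing from the defining polynomials of $\Gamma_{[n]}$ to those of the torsor $Z_{\Psi_n}$ (via a rational point $\delta$, which exists because $Z_{\Psi_n}$ is an affine linear space), the coefficients on $X_{11}, X_{21}$ and $X_{12}, X_{22}$ change from $\ell_{11}, \ell_{21}$ to new quantities $A, B \in \ok(t)$. Before one can specialize at $t = \theta$ one must prove $A, A^{(-1)}, B, B^{(-1)}$ are regular there; the paper does this by applying $\sigma$ to the torsor relation, subtracting, using the algebraic independence of the four entries of $\Psi$ (which is where $\Gamma_{[0]} = \GL_2$ is used again), and then arguing that a pole at $t=\theta$ would propagate to infinitely many poles. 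Your sketch omits this step entirely. Second, and more importantly, the relations one gets after specializing at $t=\theta$ (equations \eqref{E:spec1}, \eqref{E:spec2} in the paper) are \emph{not} $\ok$-linear in the quantities $\omega_i, \lambda_j, F_\tau(\omega_i), F_\tau(\lambda_j)$: they contain products such as $\lambda_i \cdot F_\tau(\omega_2)$ because the matrix entries of $\Psi_n(\theta)$ are themselves period values. So one cannot feed them directly into Brownawell--Yu. The crucial extra idea is to take $\omega_1 \cdot \eqref{E:spec1} + \omega_2 \cdot \eqref{E:spec2}$ and invoke Anderson's Legendre relation \eqref{E:Legendre}; the quadratic terms then collapse and one is left with
\[
\ell_1(\theta)\lambda_1 + \cdots + \ell_n(\theta)\lambda_n - \ell_{11}(\theta)\omega_1 - \ell_{21}(\theta)\omega_2 = 0,
\]
which has coefficients in $k = \FF_q(\theta)$ with $\ell_n(\theta) \neq 0$. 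This contradicts the lemma's hypothesis of $k$-linear independence \emph{directly}; Brownawell--Yu, which you planned to invoke, is neither needed nor (in the form of the raw specialized equations) directly applicable. Filling in these three ingredients---regularity of $A, B$, the Legendre cancellation, and the resulting $k$-linear (not merely $\ok$-linear) relation---is what your proposal is missing.
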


\begin{proof}
Since $\Gamma_{[0]} = \GL_2$, we see that $\Gamma_{[n-1]}$ has full
dimension when $n=1$.  Thus we assume for arbitrary $n$ that
$\Gamma_{[n-1]}$ has full dimension but that $\Gamma_{[n]}$ does
not.  Then by Lemma \ref{L:dimGamman} we have $\dim
\Gamma_{[n]}=\dim \Gamma_{[n-1]}$. Moreover, by
Lemma~\ref{L:GammanEqs} there exist $\ell_{11},$ $\ell_{21}$,
$\ell_{1}, \dots, \ell_{n}\in \FF_q(t)$ with $\ell_{n}\neq 0$ so
that
\begin{gather*}
   \phi_{1} := \ell_{11}X_{11} + \ell_{21}X_{21} + \ell_{1}X_{1} + \dots
+ \ell_{n}X_{n}-\ell_{11},   \\
   \phi_{2} : =\ell_{11}X_{12} + \ell_{21}X_{22} + \ell_{1}Y_{1} + \dots
+ \ell_{n}Y_{n}-\ell_{21},
\end{gather*}
are defining polynomials for $\Gamma_{[n]}$.

By Theorem \ref{T:GalThy}, $Z_{\Psi_{n}}$ is a torsor for
$\Gamma_{[n]}\times_{\FF_q(t)}\ok(t)$ over $\ok(t)$.  Since
$\Gamma_{[n]}$ is an affine linear space over $\FF_q(t)$,
$Z_{\Psi_{n}}$ is also an affine linear space over $\ok(t)$.  Hence
$Z_{\Psi_{n}}(\ok(t))$ is non-empty.  Choosing $\delta\in
Z_{\Psi_{n}}(\ok(t))$, we see that $Z_{\Psi_{n}}(\ok(t))=\delta \cdot
\Gamma_{\Psi_{n}}(\ok(t))$, which implies that
\[
AX_{11} + BX_{21} + \ell_{1}X_{1} + \cdots+ \ell_{n}X_{n}-\ell_{11},\quad
AX_{12} + BX_{22} + \ell_{1}Y_{1} + \cdots + \ell_{n}Y_{n}-\ell_{21},
\]
are defining polynomials for $Z_{\Psi_{n}}$, where
\[
[A,B,\ell_{1},\dots,\ell_{n}] :=
[\ell_{11},\ell_{21},\ell_{1},\dots,\ell_{n}]\delta^{-1}.
\]
We claim that $A$, $A^{(-1)}$, $B$, $B^{(-1)} \in \ok(t)$ are
regular at $t=\theta$. Let
$[F_{i},G_{i}]:=\mathbf{g}_{i}^{\mathrm{tr}}\Psi$ for $i=1,\dots,n$.
Then by the definition of $Z_\Psi$ in \S\ref{S:DiffGal} we have two
equations
\begin{gather}\label{E:ABFi1}
A\Psi_{11} + B\Psi_{21} + \ell_{1}F_{1} + \cdots + \ell_{n}F_{n}
- \ell_{11}=0,\\
\label{E:ABFi2}
A\Psi_{12} + B\Psi_{22} + \ell_{1}G_{1} + \cdots + \ell_{n}G_{n}
- \ell_{21}=0.
\end{gather}
Using the $\sigma$-action on \eqref{E:ABFi1} and then
subtracting it from itself, we have:
\[
\biggl( A-B^{(-1)}(t-\theta)-\sum_{i=1}^{n}
\alpha_{i}\ell_{i} \biggr)\Psi_{11}+\bigl(
B-A^{(-1)}+\kappa^{(-1)}B^{(-1)}\bigr) \Psi_{21} = 0.
\]
Since $\Gamma_{[0]}=\Gamma_{\Psi}=\GL_{2}$, Theorem \ref{T:GalThy}
implies that the four functions $\{ \Psi_{ij} : i=1,2,\ j=1,2\}$ are
algebraically independent over $\ok(t)$.  Hence
\begin{equation}\label{E:ABtwist}
A - B^{(-1)}(t-\theta) - \sum_{i=1}^{n} \alpha_{i}\ell_{i} =0, \quad
B - A^{(-1)} + \kappa^{(-1)}B^{(-1)} = 0,
\end{equation}
and thus,
\[
B+\kappa ^{(-1)} B^{(-1)}-(t-\theta^{(-1)} ) B^{(-2)}=\sum_{i=1}^{n}
\alpha_{i}^{(-1)}\ell_{i}.
\]
Note that the right hand side of this equation is regular at
$t=\theta^{q^{i}}$ for all $i\in \ZZ$. Indeed if $B$ has a pole at
$t=\theta$, then either $B^{(-1)}$ or $B^{(-2)}$ has pole at
$t=\theta$.  That is, either $B$ has pole at $t=\theta^{q}$ or $B$ has
pole at $t=\theta^{q^{2}}$.  Continuing this argument, we see that $B$
has infinitely many poles among $\{t =
\theta^{q^{i}}\}_{i=1}^{\infty}$, which contradicts that $B\in \ok(t)$.
Using a similar argument, we see that $B^{(-1)}$ is also regular at
$t=\theta$. By \eqref{E:ABtwist}, we thus see that $A$ and $A^{(-1)}$
are regular at $t=\theta$ and that
\begin{equation}\label{E:Atheta}
A(\theta)=\sum_{i=1}^{n} \alpha_{i} \ell_{i}(\theta).
\end{equation}

Recall that $\Psi(\theta)$ is given explicitly in
\eqref{E:PsiRhoTheta}.  By specializing \eqref{E:ABFi1}
and \eqref{E:ABFi2} at $t=\theta$ and using
\eqref{E:Atheta}, we have
\begin{gather}\label{E:spec1}
\biggl( \sum_{i=1}^{n}\ell_{i}(\theta) \lambda_{i} \biggr)\xi
F_{\tau}(\omega_{2})+ \biggl(B(\theta)-\sum_{i=1}^{n}
\ell_{i}(\theta)F_{\tau}(\lambda_{i}) \biggr)\xi
\omega_{2} - \ell_{11}(\theta)\tpi=0, \\
\label{E:spec2}
-\biggl( \sum_{i=1}^{n}\ell_{i}(\theta)\lambda_{i}\biggr)\xi
F_{\tau}(\omega_{1}) - \biggl(B(\theta)-\sum_{i=1}^{n}
\ell_{i}(\theta)F_{\tau}(\lambda_{i}) \biggr)\xi
\omega_{1}-\ell_{21}(\theta)\tpi = 0.
\end{gather}
Using the analogue of Legandre's relation \eqref{E:Legendre}, then
$\eqref{E:spec1} \times \omega_{1}+\eqref{E:spec2} \times \omega_{2}=0$ implies
\[
\ell_{1}(\theta)\lambda_{1}+\cdots+\ell_{n}(\theta)\lambda_{n}
-\ell_{11}(\theta)\omega_{1}-\ell_{21}(\theta)\omega_{2}=0.
\]
Since $\ell_{n}(\theta)\neq 0$, we obtain a non-trivial $k$-linear
dependence among $\omega_1$, $\omega_2$, $\lambda_1, \dots,
\lambda_n$, which is a contradiction.
\end{proof}

\begin{theorem}\label{T:LogMain}
  Suppose that $p$ is odd.  Let $\rho$ be any rank $2$ Drinfeld
  $\FF_q[t]$-module defined over $\ok$ without complex
  multiplication. Let $\lambda_{1}, \dots, \lambda_{m} \in \CC_\infty$
  such that $\exp_{\rho}(\lambda_{i})\in \ok$ for
  $i=1, \dots, m$.  If $\lambda_{1}, \dots, \lambda_{m}$ are linearly
  independent over $k$, then the $2m$ elements
\[
\lambda_{1}, \dots, \lambda_{m}, F_{\tau}(\lambda_{1}),
\dots,F_{\tau}(\lambda_{m})
\]
are algebraically independent over $\ok$.
\end{theorem}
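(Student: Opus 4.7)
The plan is to deduce Theorem~\ref{T:LogMain} from Lemma~\ref{L:redII} by bridging the gap between the two hypotheses: Lemma~\ref{L:redII} requires $\omega_1,\omega_2,\lambda_1,\dots,\lambda_m$ to be $k$-linearly independent, whereas the theorem only assumes this for $\lambda_1,\dots,\lambda_m$. By Remark~\ref{R:GammaMisGL2}, we may first reduce to the normalized form $\rho_t=\theta+\kappa\tau+\tau^2$ without changing the field generated by our quantities or the $k$-linear independence hypothesis.

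Let $d$ be the $k$-dimension of the $k$-span of $\{\omega_1,\omega_2,\lambda_1,\dots,\lambda_m\}$ in $\CC_\infty$, so that $m\le d\le m+2$. Since $\omega_1,\omega_2$ form an $\FF_q[\theta]$-basis of $\Lambda_\rho$ they are $k$-linearly independent, and we extend $\{\omega_1,\omega_2\}$ to a basis of the span by adjoining a subset $\{\lambda_{i_1},\dots,\lambda_{i_r}\}$ with $r=d-2$; set $I:=\{i_1,\dots,i_r\}$. Applying Lemma~\ref{L:redII} to $(\lambda_{i_1},\dots,\lambda_{i_r})$ yields algebraic independence over $\ok$ of the $2d$ quantities $\omega_1,\omega_2,\lambda_{i_k},F_\tau(\omega_1),F_\tau(\omega_2),F_\tau(\lambda_{i_k})$. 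If $d=m+2$ the theorem follows at once. Otherwise, for each $j\in J:=\{1,\dots,m\}\setminus I$ (so $|J|\in\{1,2\}$) there is a unique expansion $\lambda_j=c_1^{(j)}\omega_1+c_2^{(j)}\omega_2+\sum_k c_{i_k}^{(j)}\lambda_{i_k}$ with coefficients in $k$. Clearing denominators by some $D_j\in\FF_q[\theta]$ and applying~\eqref{E:QuasiPerFunEq}---using $\exp_\rho(\omega_s)=0$, $\exp_\rho(\lambda_{i_k})\in\ok$, and the fact that $\delta_a\in\CC_\infty[\tau]\tau$ has $\ok$-coefficients for $a\in\FF_q[t]$---gives
\[
F_\tau(\lambda_j)=c_1^{(j)}F_\tau(\omega_1)+c_2^{(j)}F_\tau(\omega_2)+\sum_k c_{i_k}^{(j)}F_\tau(\lambda_{i_k})+\gamma_j,\qquad \gamma_j\in\ok,
\]
so the same $k$-coefficients appear in the expansions of $\lambda_j$ and $F_\tau(\lambda_j)$, up to an $\ok$-valued constant.

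Suppose for contradiction a nonzero $P\in\ok[X_1,\dots,X_m,Y_1,\dots,Y_m]$ annihilates $(\lambda_1,\dots,\lambda_m,F_\tau(\lambda_1),\dots,F_\tau(\lambda_m))$. Substituting the above expressions for $\lambda_j,F_\tau(\lambda_j)$ with $j\in J$ produces a polynomial relation over $\ok$ among the $2d$ algebraically independent quantities of the previous step, forcing it to vanish identically as a polynomial. The substitution is an $\ok$-algebra map on polynomial rings whose injectivity we verify by Jacobian computation: since the $X$-substitutions only involve $T_1,T_2$ and the $X_{i_k}$'s, and the $Y$-substitutions only involve $S_1,S_2$, $Y_{i_k}$'s, and a constant (with the \emph{same} coefficients), the Jacobian is block-diagonal and reduces to the single condition that the vectors $(c_1^{(j)},c_2^{(j)})\in k^2$ for $j\in J$ be $k$-linearly independent (when $|J|=2$) or nonzero (when $|J|=1$). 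Any failure would yield a $k$-linear combination of the $\lambda_j$'s $(j\in J)$ lying in the $k$-span of the $\lambda_{i_k}$'s, contradicting the $k$-linear independence of $\lambda_1,\dots,\lambda_m$. Thus $P\equiv 0$, a contradiction. The main obstacle is precisely this injectivity verification: it is here that the full hypothesis on $\lambda_1,\dots,\lambda_m$ is used essentially, and the propagation of the same $k$-coefficients through~\eqref{E:QuasiPerFunEq} is what makes the Jacobian reduce to one clean $2\times 2$ linear algebra check in $k^2$.
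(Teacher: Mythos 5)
Your proposal is correct and reduces, as the paper does, to Lemma~\ref{L:redII} applied to a maximal $k$-linearly independent subtuple $\omega_1, \omega_2, \lambda_{i_1}, \dots, \lambda_{i_r}$; the difference lies in how the remaining $\lambda_j$, $F_\tau(\lambda_j)$ ($j\in J$) are reabsorbed. You expand them in terms of the chosen $2d$-tuple (with $F_\tau(\lambda_j)$ expressed modulo an element of $\ok$ via \eqref{E:QuasiPerFunEq}) and verify injectivity of the resulting polynomial-ring substitution over $\ok$ by a rank computation, which reduces to $k$-linear independence of the vectors $(c_1^{(j)}, c_2^{(j)})_{j\in J}$ in $k^2$ --- exactly what the hypothesis on $\lambda_1,\dots,\lambda_m$ supplies. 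The paper instead keeps every $\lambda_j$ and replaces the $\FF_q[\theta]$-basis $\{\omega_1, \omega_2\}$ of $\Lambda_\rho$ by a $k$-basis $\{v_1, v_2\}$ of $\mathbf{\Lambda}_\rho$ in which the redundant $v_i$'s are $k$-combinations of the $\lambda_j$'s (e.g.\ $v_2 = \omega_2 - b_0\omega_1 = \sum b_j\lambda_j$ when $\dim_k N = m+1$), then compares transcendence degrees using the field equalities \eqref{E:omegazkspan}--\eqref{E:okomegazFtau}. Both routes hinge on the same consequence of the functional equation \eqref{E:QuasiPerFunEq}, namely that $k$-linear relations among elements $z$ with $\exp_\rho(z)\in\ok$ propagate through $F_\tau$ modulo $\ok$; your substitution/rank formulation makes the role of the linear independence hypothesis more explicit, while the paper's field-theoretic formulation is shorter because \eqref{E:okomegazFtau} packages the same bookkeeping.
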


\begin{proof}
By Remark \ref{R:GammaMisGL2} we may assume without loss of
generality that the coefficient of $\tau^2$ in $\rho_t$ is $1$. Let
$\Lambda_{\rho}:=\FF_q[\theta]\omega_{1} + \FF_q[\theta]\omega_{2}$
be the period lattice of $\rho$. Let $\mathbf{\Lambda}_{\rho}$ be
the $k$-vector space spanned by $\omega_{1}$ and $\omega_{2}$, and
let $\{ v_{1},v_{2} \}$ be any $k$-basis of
$\mathbf{\Lambda}_{\rho}$.  Given any $z_{1}, \dots, z_{n} \in
\CC_{\infty}$ such that $\exp_{\rho}(z_{i})\in \ok$ for
$i=1,\dots,n$, we observe that
\begin{gather}\label{E:omegazkspan}
k\hbox{-Span}\{\omega_{1},\omega_{2}, z_{1},\dots,z_{n}
\} = k\hbox{-Span}\{v_{1},v_2,z_{1},\dots,z_{n} \};\\
\label{E:okomegazFtau}
\ok\bigl(\cup_{j=1}^{n}
\cup_{i=1}^{2}\{\omega_{i},F_{\tau}(\omega_{i}),z_{j},F_{\tau}(z_{j})
\}\bigr)= \ok\bigl(\cup_{j=1}^{n}
\cup_{i=1}^{2}\{v_{i},F_{\tau}(v_{i}),z_{j},F_{\tau}(z_{j})
\}\bigr).
\end{gather}
Note that \eqref{E:okomegazFtau} follows from the
fact that the quasi-periodic function $F_{\tau}$ is
$\FF_q[\theta]$-linear on $\Lambda_{\rho}$ and satisfies difference
equations as in \eqref{E:QuasiPerFunEq}.

We define $N := k\hbox{-Span}\{\omega_{1},\omega_{2}, \lambda_{1},
\dots, \lambda_{m}\}$.  By Lemma \ref{L:redII}, the theorem is
immediately true if $\dim_{k} N = m+2$.  If $\dim_{k}N = m+1$, then
without loss of generality we can assume that $\omega_2 = b_0
\omega_1 + \sum_{j=1}^m b_j \lambda_j$, $b_j \in k$, with $b_1 \neq
0$.  In that case, $\{ \omega_1, \sum b_j\lambda_j \}$ is a
$k$-basis of $\mathbf{\Lambda}_\rho$ and $\{ \omega_1, \sum
b_j\lambda_j, \lambda_2, \dots \lambda_m \}$ is a $k$-basis of $N$.
The result then follows from Lemma \ref{L:redII} combined with
\eqref{E:omegazkspan} and \eqref{E:okomegazFtau}.  If $\dim_{k} N =
m$, then in a similar manner, we can find $b_j$, $c_j \in k$ so that
$\{ \sum_j b_j\lambda_j ,\sum_j c_j\lambda_j \}$ is a $k$-basis of
$\mathbf{\Lambda}_{\rho}$ and $\{ \sum_j b_j\lambda_j ,\sum_j
c_j\lambda_j, \lambda_3, \dots, \lambda_m\}$ is a $k$-basis of $N$.
Again the result follows from Lemma \ref{L:redII} combined with
\eqref{E:omegazkspan} and \eqref{E:okomegazFtau}.
\end{proof}

\bibliographystyle{amsplain}

\begin{thebibliography}{99}
\bibitem {Anderson86} G. W. Anderson, \textit{$t$-motives}, Duke
  Math. J.  \textbf{53} (1986), 457--502.

\bibitem {ABP} G. W. Anderson, W. D. Brownawell and M. A. Papanikolas,
  \textit{Determination of the algebraic relations among special
    {G}amma-values in positive characteristic}, Ann. of
  Math. (2) \textbf{160} (2004), 237--313.

\bibitem {Brownawell98} W. Dale Brownawell, \textit{Transcendence in
positive characteristic}, Number Theory (Tiruchirapalli, 1996),
Amer. Math. Soc., Providence (1998), 317--332.

\bibitem {Brownawell} W. Dale Brownawell, \textit{Minimal extensions
    of algebraic groups and linear independence}, J. Number
  Theory. \textbf{90} (2001), 239--254.

\bibitem {BP} W. D. Brownawell and M. A. Papanikolas, \textit{Linear
    independence of {G}amma-values in positive characteristic},
  J. reine angew. Math. \textbf{549} (2002), 91--148.

\bibitem {DavidDenis} S. David and L. Denis, \textit{P\'eriodes de
    modules de ``l'ind\'ependance quadratique en rang II,''}
  J. Ramanujan Math. Soc.  17 (2002), 65--83.

\bibitem{Denis} L. Denis, \textit{Ind\'ependance alg\'ebrique en
    caract\'eristique deux}, J. Number Theory \textbf{66} (1997),
  183--200.

\bibitem {Gekeler deRham} E.-U. Gekeler, \textit{On the de Rham
    isomorphism for Drinfeld modules}, J. reine
  angew. Math. \textbf{401} (1989), 188--208.

\bibitem {Goss} David Goss, \textit{Basic structures of function field
    arithmetic}, Springer-Verlag, Berlin, 1996.

\bibitem {Humphreys} J. E. Humphreys, \textit{Linear algebraic groups},
Springer-Verlag, New York, 1975.

\bibitem {Papanikolas}M. A. Papanikolas, \textit{Tannakian duality for
    Anderson-Drinfeld motives and algebraic independence of
   Carlitz logarithms}, Invent. Math. {\bf{171}} (2008), 123--174.

\bibitem{Pellarin} F. Pellarin, \textit{Aspects de l'ind\'ependance
alg\'ebrique en caract\'eristique non nulle}, S\'eminaire Bourbaki
59\`eme ann\'ee, 2006-2007, no. 973.

\bibitem {T.A.Springer} T. A. Springer, \textit{Linear algebraic
    groups, 2nd edition},Birkh\"auser, Boston, 1998.

\bibitem {Thakur} D. S. Thakur, \textit{Function field arithmetic},
World Scientific Publishing, River Edge NJ, 2004.

\bibitem {Thiery} A. Thiery, \textit{Ind\'ependance alg\'ebrique des
    p\'eriodes et quasi-p\'eriodes d'un module de {D}rinfeld}, The
  arithmetic of function fields (Columbus, OH, 1991),
de Gruyter, Berlin (1992), 265--284.

\bibitem {Waldschmidt} M. Waldschmidt, \textit{Elliptic functions and
transcendence}, Dev. Math. (to appear).

\bibitem {Yu86} J. Yu, \textit{Transcendence and {D}rinfeld modules},
Invent. Math.  \textbf{83} (1986), 507--517.

\bibitem {Yu90} J. Yu, \textit{On periods and quasi-periods of
    {D}rinfeld modules}, Compositio Math.  \textbf{74} (1990),
  235--245.

\bibitem {Yu97} J. Yu, \textit{Analytic homomorphisms into {D}rinfeld
    modules}, Ann. of Math. (2) \textbf{145} (1997), 215--233.


\end{thebibliography}

\end{document}